\newcommand{\marginlabel}[1]%
{\mbox{}\marginpar{\toggedleft\hspace{0pt}\bfseries\sf#1}}
\numberwithin{equation}{subsection}
\newtheorem{thm}{Theorem}[section]
\newtheorem{lem}[thm]{Lemma}
\newtheorem{prop}[thm]{Proposition}
\newtheorem{cor}[thm]{Corollary}
\newtheorem{cor-def}[thm]{Corollary--Definition}
\newtheorem*{thmInt}{Main Theorem}
\theoremstyle{definition}
\newtheorem{defn}[thm]{Definition}
\newtheorem{rem}[thm]{Remark}
\def\Stab{\mathop{\mathrm{Stab}}\nolimits}
\newcommand{\CC}{\mathbb{C}}
\renewcommand{\H}{\mathcal{H}}
\newcommand{\I}{\mathcal{I}}
\newcommand{\MMM}{\mathfrak{M}}
\renewcommand{\O}{\mathcal{O}}
\renewcommand{\P}{\mathcal{P}}
\newcommand{\PP}{\mathbb{P}}
\newcommand{\RR}{\mathbb{R}}
\newcommand{\ZZ}{\mathbb{Z}}
\renewcommand{\geq}{\geqslant}
\renewcommand{\leq}{\leqslant}
\newcommand{\cat}[1]{\begin{bf}#1\end{bf}}
\newcommand{\rra}{\twoheadrightarrow}
\newcommand{\abs}[1]{\left\vert#1\right\vert}
\newcommand{\set}[1]{\left\{#1\right\}}
\newcommand{\gen}[1]{\left\langle#1\right\rangle}
\def\llambda{\ensuremath{\boldsymbol{\lambda}}}
\DeclareMathOperator{\coh}{\cat{Coh}}
\DeclareMathOperator{\Hom}{Hom}
\DeclareMathOperator{\Ext}{Ext}
\DeclareMathOperator{\id}{id}
\DeclareMathOperator{\coker}{coker}
\DeclareMathOperator{\rk}{rk}
\DeclareMathOperator{\cExt}{\mathcal{E}\mathit{xt}}
\DeclareMathOperator{\cHom}{\mathcal{H}\mathit{om}}
\DeclareMathOperator{\ch}{ch}
\DeclareMathOperator{\codim}{codim}
\newcommand{\Db}{{\rm D}^{\rm b}}
\newcommand{\mJ}{\mathfrak{J}}
\newcommand{\mM}{\mathfrak{M}}
\newcommand{\dual}{\makebox[0mm]{}^{{\scriptstyle\vee}}}
\newcommand{\ddual}{\dual\dual}
\newcommand{\Gm}{C}
\newcommand{\Coh}{\mathbf{Coh}}
\newcommand{\on}[1]{\operatorname{#1}}
\begin{document}

\title[Generalized twisted cubics on a cubic fourfold]{Generalized twisted cubics on a cubic fourfold as a moduli space of stable objects}

\author[M.~Lahoz, M.~Lehn, E.~Macr\`i, and P.~Stellari]{Mart\'{\i} Lahoz, Manfred Lehn, Emanuele Macr\`i, and Paolo Stellari}

\address{M.L.: Institut de Math\'{e}matiques de Jussieu -- Paris Rive Gauche (UMR 7586), Universit\'{e} Paris Diderot -- Paris~7,
B\^{a}timent Sophie Germain, Case 7012, 75205 Paris Cedex 13, France}
\email{marti.lahoz@imj-prg.fr}
\urladdr{\url{http://webusers.imj-prg.fr/~marti.lahoz/}}

\address{M.L.: Institut f\"ur Mathematik, Johannes Gutenberg Universit\"at Mainz, 55099 Mainz, Germany}
\email{lehn@mathematik.uni-mainz.de}

\address{E.M.: Department of Mathematics, Northeastern University, 360 Huntington Avenue, Boston, MA 02115, USA}
\email{e.macri@northeastern.edu}
\urladdr{\url{https://web.northeastern.edu/emacri/}}

\address{P.S.: Dipartimento di Matematica ``F.~Enriques'', Universit{\`a} degli Studi di Milano, Via Cesare Saldini 50, 20133 Milano, Italy}
\email{paolo.stellari@unimi.it}
\urladdr{\url{http://users.unimi.it/stellari}}

\thanks{Mart\'i Lahoz is partially supported by the grant MTM2015-65361-P MINECO/FEDER, UE
and the grant number 230986 of the Research Council of Norway.
Emanuele Macr\`i is partially supported by the NSF grant DMS-1523496.
Paolo Stellari is partially supported by the grants FIRB 2012 ``Moduli Spaces and Their Applications'' and
the national research project ``Geometria delle Variet\`a Proiettive'' (PRIN 2010-11).}

\keywords{Generalized twisted cubics, arithmetically Cohen-Macaulay vector bundles, cubic fourfolds}

\subjclass[2010]{18E30, 14E08}

\begin{abstract}
We revisit the work of Lehn--Lehn--Sorger--van Straten on twisted cubic curves in a cubic fourfold not containing a plane in terms of moduli spaces.
We show that the blow-up $Z'$ along the cubic of the irreducible holomorphic symplectic eightfold $Z$, described by the four authors, is isomorphic to an irreducible component of a moduli space of Gieseker stable torsion sheaves or rank three torsion free sheaves.

For a very general such cubic fourfold, we show that $Z$ is isomorphic to a connected component of a moduli space of tilt-stable objects in the derived category and to a moduli space of Bridgeland stable objects in the Kuznetsov component. 
Moreover, the contraction between $Z'$ and $Z$ is realized as a wall-crossing in tilt-stability. 

Finally, $Z$ is birational to an irreducible component 
of Gieseker stable aCM bundles of rank six.
\end{abstract}

\maketitle

\setcounter{tocdepth}{1}
\tableofcontents


\section*{Introduction}
The birational geometry of cubic fourfolds is a fascinating and challenging problem in algebraic geometry.
The guiding principle of this paper is to understand and reinterpret geometric constructions on cubic fourfolds in terms of sheaf theory and homological algebra.

The conjectural relation between the question of rationality of cubic fourfolds and their derived categories of coherent sheaves is now well-known, and it emerged in the work of Kuznetsov \cite{Kuz:4fold};
the derived category of a cubic fourfold has a semiorthogonal component, the Kuznetsov component, whose properties are supposed to detect rationality.
Addington and Thomas \cite{AT} showed that Kuznetsov's categorical approach to rationality essentially matches the more classical Hodge theoretical one due to Hassett \cite{Hassett}.

In this paper we deal with spaces of rational curves.
For low degrees, spaces of rational curves give rise to irreducible holomorphic symplectic (IHS) varieties.
Beauville and Donagi \cite{BD} showed that the Fano variety $F(Y)$ of lines on a cubic fourfold $Y$ is a smooth projective IHS variety of dimension four deformation equivalent to the Hilbert scheme of two points on a K3 surface.
More recently, by following geometric intuitions by Dolgachev and seminal works \cite{dJS,EPS,ES,PS}, Lehn, Lehn, Sorger, and van Straten \cite{LLSvS} studied the space of rational curves of degree three.
If the cubic fourfold $Y$ does not contain a plane, they proved that the irreducible component $M_3(Y)$ of the Hilbert scheme containing twisted cubic curves is a smooth projective variety of dimension ten.
The curves in $M_3(Y)$ always span a $\PP^3$, so there is a natural morphism from $M_3(Y)$ to the Grassmannian $\on{Grass}(3,\PP^5)$ of three-dimensional projective subspaces in $\PP^5$.
This morphism induces a fibration $M_3(Y) \to Z'(Y)$, which is a $\PP^2$-fiber bundle.
The variety $Z'(Y)$ is also smooth and projective of dimension eight.
Roughly speaking, $Z'(Y)$ is constructed as a moduli space of determinantal representations of cubic surfaces in $Y$ (see \cite{B1,dolgachev}, for more on determinantal representations).
Finally, in $Z'(Y)$ there is an effective divisor coming from non-CM twisted cubics on $Y$.
This divisor can be contracted, giving rise to a new variety denoted by $Z(Y)$.
The variety $Z(Y)$ is a smooth IHS variety of dimension eight.
It contains the cubic fourfold $Y$ and $Z'(Y)$ can be realized as the blow-up of $Z(Y)$ in $Y$.
In \cite{AL}, it was shown that $Z(Y)$ is deformation equivalent to a Hilbert scheme of four points on a K3 surface.

The goal of this article is to give an alternative construction of $Z(Y)$ and $Z'(Y)$ by building on the previous works \cite{BLMS,LMS,LMS1,MS1}.

\begin{thmInt}\label{thm:main}
Let $Y$ be a smooth cubic fourfold not containing a plane and let $H$ denote the class of a hyperplane section.
\begin{enumerate}[{\rm(1)}]
\item\label{item:main1} Let $\mathbf{v}_1=\left(0,0,H^2,0,-\frac{1}{4}H^4\right)$. 
Then $Z'(Y)$ is isomorphic to an irreducible component of the moduli space of Gieseker stable sheaves on $Y$ with Chern character $\mathbf{v}_1$.
\item Let $\mathbf{v}_2=\left(3,0,-H^2,0,\frac{1}{4}H^4\right)$. Then: 
\begin{enumerate}[{\rm(2a)}]
\item\label{item:main2a} $Z'(Y)$ is isomorphic to an irreducible component of the moduli space of Gieseker stable torsion free sheaves on $Y$ with Chern character $\mathbf{v}_2$.
\item\label{item:main2b} If $Y$ is very general, both $Z(Y)$ and $Z'(Y)$ are isomorphic to an irreducible component of the moduli space of tilt-stable objects on $\Db(Y)$ with Chern character $\mathbf{v}_2$.
The contraction $Z'(Y)\to Z(Y)$ is realized as a wall-crossing contraction in tilt-stability.
\item\label{item:main2c} If $Y$ is very general, then $Z(Y)$ is isomorphic to a moduli space of Bridgeland stable objects on $\cat{T}_Y$ with Chern character $\mathbf{v}_2$.
\end{enumerate}
\item\label{item:main3} Let $\mathbf{v}_3 = \left(6,-3H,-\frac{1}{2}H^2,\frac{1}{2}H^3,\frac{1}{8}H^4\right)$.
Then $Z(Y)$ is birational to a component of the moduli space of Gieseker stable aCM bundles on $Y$ with Chern character $\mathbf{v}_3$.
\end{enumerate}
\end{thmInt}

A cubic fourfold is \emph{very general} if the algebraic part $H^4(Y,\mathbb{Z})\cap H^{2,2}(Y)$ of the cohomology group $H^4(Y,\mathbb{Z})$ is the smallest possible, i.e., it is generated by the class of a smooth cubic surface.
Also, we denote by $\cat{T}_Y$ the Kuznetsov component of $Y$; this is the triangulated subcategory of $\Db(Y)$ defined by
\[
\begin{split}
\cat{T}_Y:=& \langle \O_Y, \O_Y(H), \O_Y(2H) \rangle^\perp \\
=&\set{G\in\Db(Y):\Hom^p_{\Db(Y)}(\O_Y(iH),G)=0,\text{ for all }p\text{ and }i=0,1,2}.
\end{split}
\]

Part~\eqref{item:main1} of the Main Theorem identifies $Z'(Y)$ with an irreducible component of the moduli space of ideal sheaves of generalized twisted cubics inside the corresponding cubic surface (see Proposition~\ref{prop:ideals}). 
Note that the moduli space of such ideals has more than one irreducible component (see Remark~\ref{rmk:seccomp1}).

Part~\eqref{item:main2a} is in some sense a reformulation of Part~\eqref{item:main1}: the rank three torsion free sheaves are obtained by mutation of the ideal sheaves in Part~\eqref{item:main1} (see Section~\ref{subsec:proj}).
A priori it is not clear why these sheaves are Gieseker stable, and this is the main content of this part of the theorem (see Proposition~\ref{prop:stabF}). 
The advantage of this description of $Z'(Y)$ is that the rank three torsion free sheaves associated to aCM twisted cubics
belong to the category $\cat{T}_Y$. As explained in \cite[Section~4]{KM}, this reconstructs the symplectic structure on the corresponding open subset.
The stability of the sheaves associated to aCM twisted cubics was already proved in \cite[Lemma~2.5]{SS}. 

Our motivation for Parts \eqref{item:main2b} and \eqref{item:main2c} is to directly construct $Z(Y)$ as a moduli space of objects in the derived category (or in $\cat{T}_Y$), thus reproving \cite{LLSvS}, as also suggested in \cite{AL}.
The main issues are the stability of the objects involved and the projectivity of the resulting moduli spaces.
We can solve the stability problem by restricting ourselves to very general cubic fourfolds; this is, anyway, an interesting case from many perspectives (see, for example, \cite{Hcub}).
We cannot yet solve the projectivity issue, and so to prove the Main Theorem we still have to rely on \cite{LLSvS}. On the other hand, we will observe later that the smoothness of the moduli spaces in Parts \eqref{item:main2b} and \eqref{item:main2c} is automatic since they parametrize objects contained in the K3 category $\cat{T}_Y$. In contrast to this, proving that $Z(Y)$ is smooth requires some work in \cite{LLSvS}.

Tilt-stability is an auxiliary notion of stability in the derived category, introduced in \cite{BMT}, as a direct generalization of Bridgeland stability on surfaces \cite{BK3}.
It depends on two real parameters, $\alpha$ and $\beta$, $\alpha>0$.
The basic fact is that when $\alpha$ is sufficiently large, the moduli space of tilt-stable objects with fixed numerical invariants is isomorphic to stable sheaves, where stability is now the usual notion of Gieseker stability (with Hilbert polynomial truncated at $\ch_2$).
Hence, for the Chern character $\mathbf{v}_2$ for $\alpha$ large and $\beta<0$, Part~\eqref{item:main2a} of the Main Theorem realizes $Z'(Y)$ as an irreducible component of the moduli space of tilt-stable objects.
The idea is now to vary $\alpha$ and study the transformations induced on the moduli space when stability changes (very much like usual variation of GIT quotients, see \cite{DH,Tha}).
In fact, Part~\eqref{item:main2b} arises by crossing the first wall: the induced map contracts other irreducible components, and induces on $Z'(Y)$ a blow-down onto $Z(Y)$.
As remarked before, the sheaves corresponding to aCM curves are in $\cat{T}_Y$.
Crossing the wall in tilt-stability is nothing but projecting the sheaves corresponding to non-CM curves onto the category $\cat{T}_Y$.
This wall-crossing interpretation for other constructions involving cubic hypersurfaces was already studied in our previous paper \cite{LMS1} for cubic threefolds; we also refer to \cite{S}, where wall-crossing techniques are treated more in detail in the case of the projective three-dimensional space.

Bridgeland stability conditions on the Kuznetsov component $\cat{T}_Y$ of a cubic fourfold have been constructed in \cite{BLMS}.
By using a similar argument as in \cite[Appendix~A]{BLMS}, we can prove that the objects in Part~\eqref{item:main2b} after crossing the wall are also Bridgeland stable, for very general cubic fourfolds. 
The advantage of working with Bridgeland stability is that the moduli spaces, if projective, are actually smooth connected IHS varieties. 
In fact, moduli spaces are expected for any K3 category to be proper symplectic algebraic spaces which are very close to be projective, since they are endowed with a natural non-trivial nef line bundle by \cite{BM:projectivity}.
Also, if Part~\eqref{item:main2c} could be extended to special cubic fourfolds, wall-crossing in Bridgeland stability would provide a wealth of IHS birational models for $Z(Y)$, similarly as in \cite{BM}.

Part~\eqref{item:main3} follows from Part~\eqref{item:main2a} via a second mutation, which is an autoequivalence of $\cat{T}_Y$ (see Section~\ref{subsec:ACM}). As before, the difficult part is to prove the stability of these vector bundles (see Proposition~\ref{prop:stabaCM}). 
It should also be observed that constructing families of stable aCM bundles is, in general, a difficult task.
The result above provides such a family in the rank six case.
This should be compared to the family of rank four stable aCM bundles exhibited in \cite{LMS}.

\subsection*{Plan of the paper}

The paper is organized as follows.
After some preliminaries about generalized twisted cubics and the construction of $Z(Y)$ (see Section~\ref{subsec:twisted}), we show that $Z'(Y)$ is isomorphic to (a component of) a moduli space of ideals (Part~\eqref{item:main1} of the Main Theorem; see Section~\ref{subsec:Z'ideals}).
By using this, we prove Part~\eqref{item:main2a} of the Main Theorem in Sections~\ref{subsec:proj} and \ref{subsec:proj2}.
We will also show that the moduli space of stable torsion free sheaves under consideration contains another irreducible component (see Section~\ref{subsect:secondfam}).
As a preparation, we recall in Section~\ref{subsec:Semiorth} the notion of semiorthogonal decomposition and Kuznetsov's description of the derived category of a cubic fourfold.

The proof of Part~\eqref{item:main2b} of the Main Theorem is carried out in Section~\ref{sec:tiltstable}.
This requires some preliminary results about tilt-stability discussed in Section~\ref{subsec:tilt}.
The wall-crossing argument discussed in Section~\ref{subsec:birtransf}, which concludes the proof of Part~\eqref{item:main2b}, needs a detailed analysis of the so-called first wall.
This is explained in Section~\ref{subsec:thewall}.

The proof of Part~\eqref{item:main2c} is carried out in Section~\ref{sec:Bridgelandstable}. A brief recall on Bridgeland stability on the Kuznetsov component of a very general cubic fourfold is in Section~\ref{subsec:BridgelandStability}.
The proof of the theorem is in Section~\ref{subsec:ProofOfPart2c}, after a few preliminary results in Section~\ref{subsec:BridgelandStableObjects}, which are a very mild generalization of some results in \cite[Appendix~A]{BLMS}.

To prove Part~\eqref{item:main3} of the Main Theorem, we need to move one step further and make another mutation.
More precisely,  Section~\ref{sect:acm} yields the desired aCM vector bundles.
In Section~\ref{subsec:Chern} we discuss some of their basic properties.
In the same section we describe a natural involution which is used in Section~\ref{subsec:stabMGamma} to prove their Gieseker stability.
Part~\eqref{item:main3} is finally proved in Section~\ref{subsec:proofthmmain1}.

\subsection*{Notation}

In this paper we work over the complex numbers.
For a smooth projective variety $X$, we denote by $\Db(X)$ the bounded derived category of coherent sheaves on $X$ and we refer to \cite{huy} for basics on derived categories.
We assume some familiarity with basic constructions and definitions about moduli spaces of stable bundles for which we refer to \cite{HL}.
For example, given a sheaf $F$ and an ample divisor $H$, we denote by $P(F,n):=\chi(F(nH))$ its Hilbert polynomial and by $p(F,n)$ its reduced Hilbert polynomial.

\section{The geometric setting}\label{sect:prel}

In this section we briefly recall the constructions in \cite{LLSvS} and we show that $Z'(Y)$ is isomorphic to a component of the moduli space of Gieseker stable sheaves containing the ideal sheaves of generalized twisted cubics inside the corresponding cubic surface.

\subsection{Generalized twisted cubics on cubic fourfolds}\label{subsec:twisted}

Let $Y$ be a smooth cubic fourfold not containing a plane.
Following \cite{LLSvS}, we denote by $M_3(Y):=\mathrm{Hilb}^{gtc}(Y)$ the irreducible component of the Hilbert scheme $\mathrm{Hilb}^{3n+1}(Y)$ containing the twisted cubics.
By \cite[Theorem~A]{LLSvS}, the moduli space $M_3(Y)$ is a smooth irreducible projective variety of dimension ten.

The curves $\Gm$ in $M_3(Y)$ are usually called \emph{generalized twisted cubics} and they can be divided into two classes depending on whether $\Gm$ is arithmetically Cohen-Macaulay (aCM) or non-Cohen-Macaulay (non-CM).
The latter ones are plane curves with an embedded point at a singular point of the curve.
The locus of non-CM curves is a Cartier divisor $J(Y)$ inside $M_3(Y)$.
Both aCM and non-CM curves span a $3$-dimensional linear subspace in $\PP^5$.

According to \cite{LLSvS}, the natural morphism
\[
s\colon M_3(Y)\to\mathrm{Grass}(3,\PP^5),
\]
sending a generalized twisted cubic $\Gm$ on $Y$ to the $3$-dimensional projective space $\langle\Gm\rangle$ in $\PP^5$ spanned by $\Gm$, factors through a smooth projective variety $Z'(Y)$
\begin{equation}\label{eqn:diagr}
\xymatrix{
M_3(Y)\ar[dd]_-{s}\ar[drr]^-{a}& & \\
& & Z'(Y)\ar[dll]^-{\pi}\\
\mathrm{Grass}(3,\PP^5).&&
}
\end{equation}
in such a way that $a:M_3(Y)\to Z'(Y)$ is a $\PP^2$-fiber bundle.
According to Section 3 in \cite{LLSvS}, a point $p\in \pi^{-1}([\PP^3])\subseteq Z'(Y)$ is given by the pair $([A],g)$ where $A$ is a $3\times 3$ matrix with linear entries and $g$ is an equation of the cubic surface $Y\cap \pi(p)$.
More precisely, $A$ is a stable matrix with respect to the reductive group $G = \mathrm{GL}_3 \times  \mathrm{GL}_3/\Delta_0$.
If $\det(A)\neq 0$, then the class $[A]$ is the orbit of $A$ with respect to $G$.
In that case $g=\det(A)$.
If $\det(A)=0$, then we can suppose that $A$ is skew-symmetric and the class $[A]$ is the orbit of $A$ inside the skew-symmetric matrices with respect to $\Gamma=\mathrm{GL}_3/\pm \id$ which acts via $\gamma\cdot A = \gamma A\gamma^t$.

Also, a point $q\in s^{-1}([\PP^3])\subseteq M_3(Y)$ is given by a pair $([A],g)$ where $A$ is a stable $3\times 3$ matrix with linear entries and $g$ is an equation of the cubic surface $Y\cap \pi(p)$, but the class of $[A]$ is different.
Indeed, if $q$ corresponds to an aCM curve, then $\det(A)=g$ and the class $[A]$ is the orbit of $A$ with respect to the parabolic subgroup $P=(\mathrm{GL}_3 \times P')/\CC^*\subseteq G$,
where $P'$ is the parabolic subgroup of elements that stabilize the subspace $\CC^2 \times \set{0} \subseteq \CC^3$.
Then $a(q)$ is given by taking further quotient and the fiber corresponds to $G/P\cong \PP^2$.

\medskip

Finally, by \cite[Theorem~B]{LLSvS}, the image of $a(J(Y))$ which is a Cartier divisor $D$ in $Z'(Y)$ can be contracted such that the contraction $Z(Y)$ is a smooth eight dimensional irreducible holomorphic symplectic manifold.
Hence, $Z'(Y)$ is the blow-up of $Z(Y)$ and the centre is isomorphic to $Y$ embedded as a Lagrangian submanifold in $Z(Y)$:
\[
\xymatrix{
Z'(Y) \ar[r]^-{b} & Z(Y)\\
D\cong\PP(T_Y) \ar[r]\ar@{^{(}->}[u] & Y \ar@{^{(}->}[u]^j
}
\]
where $b\colon Z'(Y)\to Z(Y)$ is the blow-up of $Z(Y)$ along $j(Y)$.

\subsection{$Z'$ as a moduli space of ideals}\label{subsec:Z'ideals}

We keep assuming that the cubic fourfold $Y$ is smooth and does not contain a plane.
Let $\mJ$ be the moduli space of Gieseker stable sheaves on $Y$ with reduced Hilbert polynomial
\begin{equation}\label{eqn:HP}
\frac{3}{2}n(n-1).
\end{equation}
Given a generalized twisted cubic $\Gm$ contained in the cubic surface $S\subseteq Y$, we have
\begin{equation}\label{eqn:calcHPI}
h^0(\I_{\Gm/S}(nH))=\frac32 n(n-1)\qquad\text{and}\qquad h^i(\I_{\Gm/S}(nH))=0,
\end{equation}
for $n\geq 1$.
Hence, the reduced Hilbert polynomial $p(\I_{\Gm/S},n)$ has the form \eqref{eqn:HP}.

\begin{lem}\label{lem:idstab}
The ideal sheaf $\I_{\Gm/S}$ on $Y$ is Gieseker stable for all generalized twisted cubics $\Gm$ in $Y$.
\end{lem}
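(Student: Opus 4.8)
The plan is to reduce the statement to a general fact: a rank-one torsion-free sheaf on an integral projective variety is automatically Gieseker stable. So the only geometric input needed is that the cubic surface carrying $\Gm$ is integral, and this is exactly where the hypothesis that $Y$ contains no plane enters.

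First I would pin down the surface. Since the curve $\Gm$ spans $\langle\Gm\rangle\cong\PP^3$ and $\Gm\subseteq Y$, we have $\Gm\subseteq S:=Y\cap\langle\Gm\rangle$; moreover $\langle\Gm\rangle\not\subseteq Y$ (a $\PP^3$ contains planes, while $Y$ contains none), so $S$ is a hypersurface of degree three in $\langle\Gm\rangle\cong\PP^3$, i.e.\ a cubic surface. Next I would argue $S$ is integral: if the cubic form defining $S$ were reducible or non-reduced it would be divisible by a linear form, so $S$ would contain a plane, and then so would $Y$ --- contradiction. Hence $S$ is integral, and in particular $\O_S$ is torsion-free, equivalently pure of dimension two.

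Then I would observe that $\I_{\Gm/S}$ is a nonzero (as $\Gm\neq S$) subsheaf of $\O_S$, hence torsion-free, and of rank one since $\dim\Gm=1<2=\dim S$; regarded on $Y$ it is therefore a pure two-dimensional sheaf. Now stability follows from the standard argument: for a rank-one torsion-free sheaf $F$ on an integral projective variety, any nonzero proper subsheaf $E\subsetneq F$ is again torsion-free of rank one, so $F/E$ is a nonzero sheaf supported in strictly smaller dimension, whence $P(E,n)=P(F,n)-P(F/E,n)<P(F,n)$ for $n\gg 0$; since $E$ and $F$ share the same positive leading Hilbert coefficient, this upgrades to $p(E,n)<p(F,n)$ for $n\gg 0$, i.e.\ Gieseker stability. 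Finally, Gieseker stability of $\I_{\Gm/S}$ viewed as a sheaf on $S$ and as its pushforward on $Y$ coincide, because passing from $S$ to $Y$ changes neither the lattice of subsheaves nor the reduced Hilbert polynomial (computed with $\O_Y(H)$ restricting to $\O_S(H)$).

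I do not expect a genuine obstacle here: the whole content is the observation that $S$ is integral, after which the statement is a uniform consequence of the elementary stability of rank-one torsion-free sheaves, applying equally to aCM and non-CM curves $\Gm$ --- the possible embedded point of a non-CM curve plays no role in the argument.
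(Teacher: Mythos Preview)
Your proposal is correct and follows essentially the same approach as the paper's proof, which is a two-sentence argument noting that $\I_{\Gm/S}$ is supported on the reduced and irreducible surface $S$ and is torsion-free of rank one there. You have simply spelled out the details the paper leaves implicit: why $S$ is integral (via the no-plane hypothesis), the standard reason rank-one torsion-free sheaves on integral schemes are stable, and why stability on $S$ agrees with stability of the pushforward on $Y$.
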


\begin{proof}
The sheaf $\I_{\Gm/S}$ is torsion supported on the reduced and irreducible cubic surface $S$.
The result follows since  $\I_{\Gm/S}$ as a sheaf on $S$ is torsion free of rank one.
\end{proof}

For later use, let us remember the following natural isomorphisms
\begin{equation}\label{eqn:extideals}
\begin{split}
\Hom(\I_{\Gm/S},\I_{\Gm/S})&\cong H^0(S,\O_S)\cong\CC\\
\Ext^1(\I_{\Gm/S},\I_{\Gm/S})&\cong H^0(S,\O_S(H))^{\oplus 2}\cong\CC^8\\
\Ext^2(\I_{\Gm/S},\I_{\Gm/S})&\cong H^0(S,\O_S(2H))\cong\CC^{10},\\
\end{split}
\end{equation}
where $\Gm$ is an aCM generalized twisted cubic contained in the cubic surface $S\subseteq Y$.

\medskip

Let us now move towards the first description of $Z'(Y)$ as a moduli space of ideals; Proposition~\ref{prop:ideals} belows proves Part~\eqref{item:main1} of the Main Theorem. 

From the discussion in Section~\ref{subsec:twisted}, we know that $Z'(Y)$ parametrizes pairs $p=([A],g)$, where $A$ is a stable $3\times 3$ matrix with linear entries and $g$ is an equation of the cubic surface $S_p=Y\cap \pi(p)$.
As explained in the proof of Proposition~3.12 in \cite{LLSvS}, any choice of a two-dimensional subspace in the space generated by the column vectors of $A$ gives a $3\times 2$-matrix whose minors provide three quadrics generating the ideal $\I_p=(Q_1,Q_2,Q_3)\subseteq \O_{\PP^3}$.
Consider $\I'_p=\I_p+(g)\subseteq \O_{\PP^3}$ and take the quotient by $(g)$ such that we obtain $\I''_p=\I'_p/(g)\subseteq i_*\O_{S_p}$.
Note that if we take any curve $\Gm\in a^{-1}(p)\subseteq M_3(Y)$, we have
\[
\I_{\Gm/S_p}\cong \I''_p.
\]
By the discussion in Section 4 of \cite{LLSvS}, the previous assignment $p\mapsto \I''_p$ works in families over $\on{Grass}(3,\PP^5)$, giving a morphism
\[
f\colon Z'(Y)\longrightarrow\mJ
\]
which clearly factors through an irreducible component $\mJ_1$ of $\mJ$.

\begin{prop}\label{prop:ideals}
The morphism $f\colon Z'(Y)\to\mJ_1$ is an isomorphism.
\end{prop}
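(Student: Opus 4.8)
The plan is to show that $f\colon Z'(Y)\to\mJ_1$ is an isomorphism by exhibiting an inverse morphism, or at least by checking that $f$ is bijective on points and an isomorphism on tangent spaces, and then invoking that $\mJ_1$ is (by construction) reduced and that $Z'(Y)$ is smooth and projective. The starting point is the commutative picture already set up: $Z'(Y)$ carries the universal family of pairs $([A],g)$, and the assignment $p\mapsto\I''_p$ was shown in Section~\ref{subsec:twisted} (following \cite[\S4]{LLSvS}) to work in families, so $f$ is a genuine morphism of schemes. Since $Z'(Y)$ is irreducible and projective, its image is a closed irreducible subvariety, hence lies in a single component $\mJ_1$ of $\mJ$.

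First I would check that $f$ is injective on closed points. A point of $\mJ_1$ in the image is an ideal sheaf $\I_{\Gm/S}$ on $Y$, and from it one recovers the cubic surface $S$ as the scheme-theoretic support of the sheaf (since $\I_{\Gm/S}$ is torsion free of rank one on the reduced irreducible surface $S$, as used in Lemma~\ref{lem:idstab}); hence one recovers $\pi(p)\in\on{Grass}(3,\PP^5)$ and the equation $g$ up to scalar. Inside $\PP^3=\pi(p)$ the sheaf determines the curve $\Gm$ (as the subscheme where the quotient $i_*\O_S/\I''_p$ is supported with the right multiplicities), and then determines the ideal $\I_p=(Q_1,Q_2,Q_3)\subseteq\O_{\PP^3}$ as the degree-two part of the saturated ideal of $\Gm$ in $\PP^3$. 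Finally, by the analysis recalled from \cite{LLSvS}, the triple of quadrics together with $g$ reconstructs the stable matrix class $[A]$: this is exactly the correspondence between determinantal/Pfaffian representations of the cubic surface and generalized twisted cubics lying on it. So $f$ is injective, and the same reconstruction, carried out relatively over a base, shows $f$ is a monomorphism; surjectivity onto $\mJ_1$ is then automatic from $Z'(Y)$ being projective and $\mJ_1$ irreducible of the same dimension (the dimension count: $\dim Z'(Y)=8$, and by \eqref{eqn:extideals} the tangent space $\Ext^1(\I_{\Gm/S},\I_{\Gm/S})\cong\CC^8$ at an aCM point, so $\mJ_1$ is smooth of dimension $8$ along the aCM locus).

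To upgrade a bijective morphism between projective varieties to an isomorphism it suffices that $df$ be injective at every point, i.e.\ that the induced map on tangent spaces $T_pZ'(Y)\to T_{\I''_p}\mJ_1=\Ext^1_Y(\I''_p,\I''_p)$ be injective, and then since $\mJ_1$ is reduced (being a component of a moduli space that, along the image, is cut out with the expected tangent dimension) one concludes $f$ is an isomorphism, e.g.\ by Zariski's main theorem together with injectivity of $df$. For the tangent map I would use the deformation-theoretic interpretation: a tangent vector at $p$ deforms the matrix $A$ and the equation $g$, hence deforms the pair $(\Gm,S)$, hence deforms $\I''_p$; injectivity amounts to saying that a nonzero first-order deformation of $([A],g)$ produces a nonzero first-order deformation of the quotient sheaf, which one checks directly from the minors-of-a-$3\times2$-submatrix description, splitting into the $\det(A)\neq0$ (determinantal) and $\det(A)=0$ (Pfaffian) cases as in Section~\ref{subsec:twisted}.

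The main obstacle is the non-CM locus: along the divisor $D\subseteq Z'(Y)$ the curve $\Gm$ acquires an embedded point, $S$ may still be reconstructed but the sheaf $\I''_p$ is less transparent, and one must verify that the reconstruction of $[A]$ from $\I''_p$ still works (the Pfaffian case, $\det(A)=0$) and that $df$ remains injective there. I expect this to require the explicit local description of $\I_p''$ for non-CM cubics together with the identification of the skew-symmetric orbit space used in \cite{LLSvS}, rather than a soft argument; everything else is either in the cited literature or a direct local computation with the matrix $A$.
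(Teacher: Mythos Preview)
Your proposal is correct and follows essentially the same route as the paper: show $f$ is injective on points by reconstructing $([A],g)$ from the ideal, show $df$ is injective everywhere, and conclude via the dimension count $\dim Z'(Y)=8=\dim\Ext^1(\I_{\Gm/S},\I_{\Gm/S})$ at an aCM point. You also correctly identify the only genuine difficulty---the non-CM locus $D$---and correctly anticipate that it cannot be handled softly; the paper indeed carries out an explicit coordinate computation there, showing that the first-order deformation transverse to $D$ (which destroys the skew-symmetry of the $3\times 3$ matrix $A$) yields a non-split self-extension of $\I_{\Gm/S}$.
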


\begin{proof}
We denote by $\mJ'_1$ the image of $f$,  with the induced reduced structure, and we think of $f$ as a morphism from $Z'(Y)$ to $\mJ'_1$.
Set $k\colon\mJ'_1\hookrightarrow\mJ_1$ to be the inclusion.

The fact that $f$ is injective follows from the argument in the proof of Proposition~3.12 and the discussion in Section 3.1 of \cite{LLSvS}.
Indeed, a point in $\mJ'_1$ corresponds to an ideal $\I_{\Gm/S}$, where $\Gm$ is a generalized twisted cubic contained in the cubic surface $S$.
Thus, it determines uniquely the cubic equation $g$ cutting out $S$ in $\PP^3$.
The ideal of the twisted cubic inside $S$ can be given by three quadrics which are the minors of a $3\times 2$-matrix.
Depending on whether the twisted curve is aCM or non-CM, the matrix can be completed uniquely (up to the action of the corresponding group) to either a stable $3\times 3$-matrix whose determinant is $g$ or to a stable skew-symmetric matrix.

We want to prove that $f$ is actually a closed embedding.
For this, it is enough to show that it is injective on tangent spaces.
On the open complement $Z'(Y)\setminus D$ of the divisor of curves with embedded points this is a straightforward verification using long exact sequences of Ext-groups that we skip.
The situation is more delicate for points on the divisor $D$.
A curve $C$ corresponding to a point on $D$ is defined as subscheme of $Y$ by the following data:
\begin{itemize}
\item[(i)] The choice of a point $y\in Y$;
\item[(ii)] The choice of a $\PP^3$ passing through $y$
and contained in the projective tangent space of $Y$ at $y$, the intersection of which with $Y$ defines a
cubic surface $S$;
\item[(iii)] A plane $P$ in this $\PP^3$ passing through $y$.
\end{itemize}
The tangent space of $[C]\in M_3(Y)$ is ten-dimensional and spanned by the following first order deformations:
\begin{itemize}
\item[(i)] Four directions corresponding to infinitesimal translations of $y$ in $Y$;
\item[(ii)] Three directions corresponding to infinitesimal deformations of $\PP^3$ inside the projective tangent space of $Y$ at $y$;
\item[(iii)] Two directions corresponding to the changes of the choice of the plane $P$;
\item[(iv)] One direction that leads to the removal of the embedded point.
\end{itemize}
The two directions listed under (iii) are those contracted under the map $M_3(Y)\to Z'(Y)$.
The seven directions listed under (i) and (ii) effectively change the position of the 3-space $\PP^3\in \mathrm{Grass}(3,\PP^5)$.
The 7-dimensional subspace $\Theta\subset T_{[C]}Z'(Y)$ spanned by these directions therefore maps injectively into the tangent space $T_{[I_{C/S}]}\mJ_1$.
We need to focus on the deformation that removes the embedded point, i.e.\ that is transverse to $D\subset Z'(Y)$.
In order to facilitate the calculation we may choose coordinates $x_0,\ldots,x_5$ in $\PP^5$ in such a way that the embedded point is $y=[1:0:0:0:0:0]$, the tangent hyperplane
to $Y$ at $y$ is $\{x_5=0\}$, the three space spanned by $C$ is $\{x_4=x_5=0\}$ and the plane $P$ that contains the plane cubic curve $C_0\subset C$ equals $\{x_3=x_4=x_5=0\}$.
Then the cubic polynomial $f$ that defines $Y$ may be written in the form $f=\sum_{i,j=1}^3 g_{ij}x_ix_j+x_4q_4+x_5q_5$ with quadratic forms $q_4$ and $q_5$ and a symmetric matrix $(g_{ij})$ of linear forms in $x_0,x_1,x_2,x_3$.
The ideal $I_{C/Y}$ is generated by $x_4,x_5,x_1x_3, x_2x_3,x_3^2$.
Besides the tautological relations the last three generators have the following non-trivial syzygies:
\[\left(\begin{smallmatrix}x_1x_3&x_2x_3&x_3^2\end{smallmatrix}\right)\cdot
\left(\begin{smallmatrix}0&-x_3&x_2\\x_3&0&-x_1\\-x_2&x_1&0\end{smallmatrix}\right)=0.\]
A deformation transverse to $D$ is characterised by the property that the skew-symmetry of the $3\times 3$-matrix $A$ appearing in the equality above is destroyed (cf.\ the discussion in Section 3.3. of \cite{LLSvS}.).
The relevant deformation of $A$ is in fact given by $A\rightsquigarrow A+\varepsilon\left(g_{ij}\right)$.
The generators of $I_{C/Y}$ change to
\[(x_4,x_5,x_1x_3+\varepsilon g_2, x_2x_3-\varepsilon g_1, x_3^2).\]
Here $g_i=\sum_{j=1}^3 g_{ij}x_j$.
Note that in first order the linear forms $x_4$ and $x_5$ that define the 3-space $\langle C\rangle$ do not change.
The first order deformation under discussion is therefore exceptional for the projection $Z'(Y)\to \mathrm{Grass}(3,\PP^5)$.
For the same reason the corresponding deformation of $I_{C/S}$ will be linearly independent of the image of the seven dimensional space $\Theta$
discussed above, provided we can show that it is non-zero.
The assertion is therefore reduced to the task of showing that the extension $0\to I_{C/S}\to I'\to I_{C/S}\to 0$, where $I'\subset \O_S[\varepsilon]$ is generated by the quadrics $x_1x_3+\varepsilon g_2, x_2x_3+\varepsilon g_1, x_3^2$, is non-split.
Any splitting $s:I_{C/S}\to I'$ necessarily has the form $s(x_1x_3)=x_1x_3+\varepsilon(g_2+\gamma_1x_3)$, $s(x_2x_3)=x_2x_3+\varepsilon(-g_1+\gamma_2x_3)$, $s(x_3^2)=x_3^2+\varepsilon\gamma_3 x_3$ with $\gamma_i\in (x_1,x_2,x_3)$.
In order for $s$ to be well-defined the relation 
\[
\left(\begin{matrix}
0&x_3&-x_2\\
-x_3&0&x_1\\
x_2&-x_1&0
      \end{matrix}\right)
\left(\begin{matrix}
g_2+\gamma_1x_3\\-g_1+\gamma_2x_3\\\gamma_3 x_3
      \end{matrix}\right)=0\in \O_S^3\]
must hold.
Using the relation $x_1g_1+x_2g_2+x_3g_3=0$ and the fact that $S$ is integral, one gets
\[g_1=\gamma_2x_3-\gamma_3x_2,\quad g_2=\gamma_3x_1-\gamma_1x_3,\quad g_3=\gamma_1x_2-\gamma_2x_1.\]
If one writes $\gamma_i=\sum_{j=1}^3 \gamma_{ij}x_j$ with complex numbers $\gamma_{ij}$ and uses the relation $\frac{\partial g_i}{\partial x_j}=\frac{\partial g_j}{\partial x_i}$, one gets $\gamma_{ij}=\delta_{ij} \gamma_0$ for some $\gamma_0$, which immediately produces the contradiction $g_1=g_2=g_3=0$.

This means that the differential of $f$ is injective and $f$ is a closed embedding.
A general point in $\mJ_1$ is of the form $\I_{\Gm/S}$ for an aCM curve $\Gm$.
In this case, by \eqref{eqn:extideals},
\[
\dim\Ext^1(\I_{\Gm/S},\I_{\Gm/S})=\dim Z'(Y)=8.
\]
Therefore, the projective variety $Z'(Y)$ is embedded as a closed subvariety into an irreducible variety of the same dimension.
So $Z'(Y)$ is actually isomorphic to $\mJ_1$.
\end{proof}

\begin{rem}\label{rmk:seccomp1}
One could proceed further and describe a second irreducible component $\mJ_2$ in $\mJ$.
Instead of taking the ideal of a generalized twisted cubic $\Gm$ in a cubic surface $S\subseteq Y$, one can consider pairs $(p,S)$, where $p\in S\subseteq Y$ and $S$ is again a cubic surface.
This yields a bundle $G\to Y$ and the fiber over $p\in Y$ is the Grassmannian of $3$-planes in $\PP^5$ passing through $p$.
Thus, the fiber is isomorphic to $\on{Grass}(3,\PP^5)$ and $G$ has dimension ten.

Arguing as in the previous case, we can map a pair $(p,S)$ in $G$ to the corresponding ideal sheaf $\I_{p/S}(-H)$ in $Y$.
This gives a morphism $f'$ between $G$ and a second irreducible component $\mJ_2\subseteq\mJ$.
It is not difficult to see that $\mJ_1$ and $\mJ_2$ intersect each other.
This is indeed the image under $f$ of the divisor $D\subseteq Z'(Y)$, where all non-CM curves in $M_3(Y)$ are mapped by $a$.
Indeed, if $p\in D$, then $f(p)=\I_{\Gm/S}$ is such that $\I_{\Gm/S}\cong\I_{p/S}(-H)$, where $p$ is the singular point of the surface $S$ and $Y$ is tangent at $p$ to the $\PP^3$ containing $S$.

Moreover, one can check that, away from the intersection between $\mJ_1$ and $\mJ_2$, the morphism $f'$ is an isomorphism.
\end{rem}

\section{$Z'$ as a moduli space of Gieseker stable torsion free sheaves}\label{sect:F}

In this section, we show that the irreducible component $\mJ_1$ described in Section~\ref{subsec:Z'ideals} is isomorphic to an irreducible component of a moduli space of torsion free sheaves on the cubic fourfold.
All together, this proves Part~\eqref{item:main2a} of the Main Theorem.

As they will be used all along the paper, we first list some basic properties of semiorthogonal decompositions.
We focus on the derived categories of cubic fourfolds.

\subsection{Semiorthogonal decompositions and cubic fourfolds}\label{subsec:Semiorth}

Take a smooth projective variety $X$ and let $\Db(X)$ be its bounded derived category of coherent sheaves.
A \emph{semiorthogonal} decomposition of $\Db(X)$ is a sequence of full triangulated subcategories $\cat{T}_1,\ldots,\cat{T}_m\subseteq\Db(X)$ such that $\Hom_{\Db(X)}(\cat{T}_i,\cat{T}_j)=0$ for $i>j$ and, for all $G\in\Db(X)$, there exists a chain of morphisms in $\Db(X)$
\[
0=G_m\to G_{m-1}\to\ldots\to G_1\to G_0=G
\]
with $\mathrm{cone}(G_i\to G_{i-1})\in\cat{T}_i$ for all $i=1,\ldots,m$.
We will denote such a decomposition by $\Db(X)=\langle\cat{T}_1,\ldots,\cat{T}_m\rangle$.

An object $F\in\Db(X)$ is \emph{exceptional} if $\Hom_{\Db(X)}(F,F)\cong\CC$ and $\Hom_{\Db(X)}^p(F,F)=0$ for all $p\neq0$.
A collection $\{F_1,\ldots,F_m\}$ of objects in $\Db(X)$ is called an \emph{exceptional collection} if $F_i$ is an exceptional object for all $i$, and $\Hom_{\Db(X)}^p(F_i,F_j)=0$ for all $p$ and all $i>j$.

\begin{rem}\label{rmk:exceptional}
An exceptional collection $\{F_1,\ldots,F_m\}$ in $\Db(X)$ provides a semiorthogonal decomposition
\[
\Db(X)=\langle\cat{T},F_1,\ldots,F_m\rangle,
\]
where, by abuse of notation, we denoted by $F_i$ the triangulated subcategory generated by $F_i$ (equivalent to the bounded derived category of finite dimensional vector spaces).
Moreover
\[
\cat{T}:=\langle F_1,\ldots,F_m\rangle^\perp=\left\{G\in\Db(X)\,:\,\Hom^p(F_i,G)=0,\text{ for all }p\text{ and }i\right\}.
\]
Similarly, one can define ${}^\perp\langle F_1,\ldots,F_m\rangle=\left\{G\in\cat{T}\,:\,\Hom^p(G,F_i)=0,\text{ for all }p\text{ and }i\right\}$.
\end{rem}

For an exceptional object $F\in \Db(X)$, we consider the two functors, respectively \emph{left and right mutation}, $\cat{L}_F,\cat{R}_F:\Db(X)\to\Db(X)$ defined by
\begin{equation*}\label{eqn:LRmutation}
\begin{split}
\cat{L}_F(G)&:=\mathrm{cone}\left(\mathrm{ev}:\mathrm{RHom}(F,G)\otimes F\to G\right)\\
\cat{R}_F(G)&:=\mathrm{cone}\left(\mathrm{ev}^\vee:G\to\mathrm{RHom}(G,F)^\vee\otimes F\right)[-1],
\end{split}
\end{equation*}
where $\mathrm{RHom}(-,-):=\oplus_{p}\Hom_{\Db(X)}^p(-,-)[-p]$.

\smallskip

Let us now go back to the case of a cubic fourfold $Y$ in $\PP^5$.
As observed in \cite{Kuz:4fold}, we have a semiorthogonal decomposition
\begin{equation}\label{eqn:so1}
\Db(Y)=\langle\cat{T}_Y,\O_Y,\O_Y(H),\O_Y(2H)\rangle,
\end{equation}
where $H$ is a hyperplane section of $Y$.
The objects $\O_Y$, $\O_Y(H)$ and $\O_Y(2H)$ are exceptional and, by definition,
\begin{equation*}
\begin{split}
\cat{T}_Y&:=\langle\O_Y,\O_Y(H),\O_Y(2H)\rangle^\perp\\
&=\left\{G\in\Db(Y):\Hom^p_{\Db(Y)}(\O_Y(iH),G)=0,\text{ for all }p\text{ and }i=0,1,2\right\}.
\end{split}
\end{equation*}
Note that $\cat{T}_Y$ is a \emph{K3 category}: its Serre functor is the shift by $2$ and its cohomological properties are the same as those of $\Db(X)$, for $X$ a K3 surface.
By tensoring by $\O_Y(-H)$ the semiorthogonal decomposition \eqref{eqn:so1}, we have
\begin{equation}\label{eqn:so2}
\Db(Y)=\langle\cat{T}'_Y,\O_Y(-H),\O_Y,\O_Y(H)\rangle
\end{equation}
and $\cat{T}'_Y$ is naturally equivalent to $\cat{T}_Y$.

\subsection{The first mutation: general properties}\label{subsec:proj}

Assume, from now on, that $Y$ is a smooth cubic fourfold not containing a plane.
Take a generalized twisted cubic $\Gm$ in $M_3(Y)$ and fix the class $H$ of an ample divisor on $Y$.
Denote by $S$ the (reduced and irreducible) cubic surface in $Y$ containing $\Gm$.
From \eqref{eqn:calcHPI} we get
\begin{equation*}\label{eqn:cohid}
H^i(Y,\I_{\Gm/S}(2H))\cong\begin{cases}
0 & i\neq 0\\
\CC^3 & i= 0.
\end{cases}
\end{equation*}

The evaluation map
\[
\xymatrix{
H^0(Y,\I_{\Gm/S}(2H))\otimes\O_Y\ar[r]^-{\mathrm{ev_\Gm}}&\I_{\Gm/S}(2H)
}
\]
is surjective and we can then define the rank three torsion free sheaf
\[
F_\Gm:=\ker(\mathrm{ev}_\Gm)
\]
which sits in the short exact sequence
\begin{equation}\label{eqn:Fkerev}
\xymatrix{
0\ar[r]&F_\Gm\ar[r]&\O_Y^{\oplus 3}\ar[r]^-{\mathrm{ev}_\Gm}&\I_{\Gm/S}(2H)\ar[r]& 0.
}
\end{equation}
From this, we deduce that the Chern character of $F_\Gm$ is
\[
\ch(F_\Gm)=\left(3,0,-H^2,0,\frac{1}{4}H^4\right),
\]
which is precisely $\mathbf{v}_2$ of the Main Theorem, and its reduced Hilbert polynomial is
\begin{equation}\label{eqn:HilbPol}
p(F_\Gm,n)=\frac1{8}n^4+\frac{3}{4}n^3+\frac{11}{8}n^2+\frac{3}{4}n.
\end{equation}

\begin{rem}\label{rem:L1}
\begin{enumerate}[{\rm (i)}]
 \item By definition, $F_\Gm$ is actually obtained by applying the functor \[\cat{L}_{\O_Y}\left(-\otimes\O_Y(H) \right)[-1]\] to the sheaf $\I_{\Gm/S}(H)$.
 \item The notation $F_\Gm$ is partly misleading as $F_\Gm$ does not depend on $\Gm$ itself but on the ideal $\I_{\Gm/S}$.
Indeed, it was shown in \cite{AL} that if $\Gm_1$ and $\Gm_2$ are aCM generalized twisted cubics, then $b([\Gm_1])=b([\Gm_2])$ if and only if $F_{\Gm_1}\cong F_{\Gm_2}$.
We will use this fact at the end of Section~\ref{subsec:birtransf}.
\end{enumerate}
\end{rem}

We have the following result.

\begin{lem}\label{lem:FinTY}
We have $h^i(Y,F_\Gm)=0$ for all $i$ while if $\Gm$ is an aCM twisted cubic, then the sheaf $F_\Gm$ is in $\cat{T}_Y$.
\end{lem}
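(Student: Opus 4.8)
The plan is to compute cohomology directly from the defining sequence \eqref{eqn:Fkerev} and then, in the aCM case, to check the three orthogonality conditions that characterize $\cat{T}_Y$.

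\emph{Vanishing of $h^i(Y,F_\Gm)$.} First I would take the long exact sequence in cohomology associated to \eqref{eqn:Fkerev}, namely
\[
\cdots\to H^{i-1}(Y,\I_{\Gm/S}(2H))\to H^i(Y,F_\Gm)\to H^i(Y,\O_Y)^{\oplus 3}\to H^i(Y,\I_{\Gm/S}(2H))\to\cdots.
\]
By \eqref{eqn:calcHPI} (applied with $n=2$) we have $H^i(Y,\I_{\Gm/S}(2H))=0$ for $i\neq 0$ and $H^0(Y,\I_{\Gm/S}(2H))\cong\CC^3$, while $H^i(Y,\O_Y)=0$ for $i\neq 0$ and $H^0(Y,\O_Y)\cong\CC$. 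Thus the only possibly nonzero terms sit in the segment $0\to H^0(Y,F_\Gm)\to\CC^3\xrightarrow{\mathrm{ev}_\Gm^0}\CC^3\to H^1(Y,F_\Gm)\to 0$, and all higher $H^i(Y,F_\Gm)$ vanish automatically. So it remains to see that the induced map $H^0(\mathrm{ev}_\Gm)\colon H^0(Y,\O_Y)^{\oplus 3}\to H^0(Y,\I_{\Gm/S}(2H))$ is an isomorphism. But $\mathrm{ev}_\Gm$ is, by construction, the evaluation of the globally generating three-dimensional space $H^0(Y,\I_{\Gm/S}(2H))$, so on global sections it is the identity $\CC^3\to\CC^3$; hence $H^0(Y,F_\Gm)=H^1(Y,F_\Gm)=0$, and $h^i(Y,F_\Gm)=0$ for all $i$.

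\emph{Membership in $\cat{T}_Y$ for $\Gm$ aCM.} By definition of $\cat{T}_Y$ I must show $\Hom^p_{\Db(Y)}(\O_Y(iH),F_\Gm)=0$ for all $p$ and $i=0,1,2$, equivalently $H^p(Y,F_\Gm(-iH))=0$ for $i=0,1,2$ and all $p$. The case $i=0$ is exactly the vanishing just established. For $i=1,2$ I would twist \eqref{eqn:Fkerev} by $\O_Y(-iH)$ and again run the long exact sequence, which reduces everything to the vanishings $H^p(Y,\O_Y(-iH))=0$ (true for all $p$ when $i=1,2$, since $-H$ and $-2H$ are negative on the Fano fourfold — this is Kodaira/Bott-type vanishing, or simply the fact that $\O_Y,\O_Y(H),\O_Y(2H)$ is an exceptional collection, so $\Hom^p(\O_Y(iH),\O_Y(jH))=0$ for $i>j$) together with $H^p(Y,\I_{\Gm/S}((2-i)H))=0$ for all $p$. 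For $i=1$ the latter is the statement $H^p(Y,\I_{\Gm/S}(H))=0$ for all $p$, and for $i=2$ it is $H^p(Y,\I_{\Gm/S})=0$ for all $p$. These are cohomological properties of the ideal sheaf of the twisted cubic that hold precisely when $\Gm$ is aCM — here one uses the standard sequence $0\to\I_{\Gm/S}\to\O_S\to\O_\Gm\to 0$, the identification $h^0(\I_{\Gm/S}(nH))=\tfrac32 n(n-1)$ with $h^{>0}=0$ for $n\ge 1$ from \eqref{eqn:calcHPI}, and the vanishing of intermediate cohomology of $\O_S$ and $\O_\Gm$ (arithmetic Cohen–Macaulayness of $\Gm$, equivalently $h^1(\I_{\Gm/S}(nH))=0$ for $n<0$ as well). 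Assembling these, each long exact sequence forces $H^p(Y,F_\Gm(-iH))=0$ for $i=1,2$ and all $p$, which is the desired orthogonality.

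\emph{Main obstacle.} The genuinely substantive point is the vanishing $H^p(Y,\I_{\Gm/S}(H))=H^p(Y,\I_{\Gm/S})=0$ for aCM $\Gm$: this is where the Cohen–Macaulay hypothesis is actually used, and it fails for non-CM curves (which is why the statement is only asserted for aCM $\Gm$). I would isolate this as the one place requiring care, deriving it from the exact sequences $0\to\I_{\Gm/S}\to\O_S\to\O_\Gm\to 0$ and $0\to\I_{S/Y}\to\O_Y\to\O_S\to 0$ together with the arithmetic Cohen–Macaulayness of both $S$ (a cubic surface, hence aCM) and $\Gm$; everything else is bookkeeping with long exact sequences and the already-recorded cohomology \eqref{eqn:calcHPI}.
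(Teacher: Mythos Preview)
Your proposal is correct and follows essentially the same approach as the paper: twist the defining sequence \eqref{eqn:Fkerev} and read off the required cohomology vanishings. The paper is merely terser, invoking Remark~\ref{rem:L1} for the first part and, for $H^p(Y,\I_{\Gm/S})=0$ in the aCM case, quoting the Hilbert--Burch type resolution $0\to\O_{\PP^3}(-3H)^{\oplus 3}\to\O_{\PP^3}(-2H)^{\oplus 3}\to\I_{\Gm/S}\to 0$ directly rather than unwinding it through $\O_S$ and $\O_\Gm$ as you do; both routes yield the same vanishing.
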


\begin{proof}
The fact that $h^i(Y,F_\Gm)=0$ for all $i$, is clear form Remark~\ref{rem:L1}.
On the other hand, $h^i(Y,F_\Gm(-H))=h^{i-1}(S,\I_{\Gm/S}(H))=0$ by \eqref{eqn:calcHPI}.
If $\Gm$ is aCM, then it has a resolution
in $\PP^3$ of the form (see, for example, \cite{B1,dolgachev})
\[
0\to \O_{\PP^3}(-3H)^{\oplus 3}\to \O_{\PP^3}(-2H)^{\oplus 3}\to \I_{\Gm/S}\to 0.
\]
Hence, $h^i(Y,F_\Gm(-2H))=h^{i-1}(S,\I_{\Gm/S})=0$ for all $i$.
\end{proof}

\begin{rem}\label{rmk:nonT}
If $C$ is a non-CM generalized twisted cubic on $Y$, we have
\[
\Hom^i(F_\Gm,\O_Y(-H))\cong \Hom^{4-i}(\O_Y(2H),F_\Gm)\cong\begin{cases}
\CC & i=1,2\\
0 &\text{otherwise}.
\end{cases}
\]
In particular, $F_\Gm$ is not an object of $\cat{T}_Y$, whenever $C$ is a non-CM generalized twisted cubic.
Moreover, combining this with Lemma~\ref{lem:FinTY}, we have that $\Hom(F_\Gm,\O_Y(-H)[1])$ is non-trivial if and only if $C$ a non-CM generalized twisted cubic on $Y$.
\end{rem}

\subsection{The first mutation: stability}\label{subsec:proj2}

First of all, given any generalized twisted cubic $\Gm$ in a cubic fourfold $Y$ not containing a plane, we can prove the following.

\begin{prop}\label{prop:stabF}
The sheaf $F_\Gm$ is Gieseker stable for all generalized twisted cubics $\Gm$ in $Y$.
\end{prop}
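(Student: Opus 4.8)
The plan is to deduce the Gieseker stability of $F_\Gm$ from the stability of the ideal sheaf $\I_{\Gm/S}$ proved in Lemma~\ref{lem:idstab}, using the defining sequence \eqref{eqn:Fkerev} together with the interpretation of $F_\Gm$ as a mutation in Remark~\ref{rem:L1}(i). First I would record the numerical data: from $\ch(F_\Gm)=\mathbf{v}_2=(3,0,-H^2,0,\tfrac14 H^4)$ the sheaf is torsion free of rank three with $\mu_H(F_\Gm)=0$ and reduced Hilbert polynomial \eqref{eqn:HilbPol}. Since any destabilizing subsheaf $E\subset F_\Gm$ would be a torsion free sheaf with $p(E,n)\geq p(F_\Gm,n)$, one first argues (e.g.\ by $\mu_H$-considerations, or by noting that $F_\Gm\subset\O_Y^{\oplus 3}$ so $\mu_H(E)\leq 0$, with equality only in borderline cases) that it suffices to rule out subsheaves with $\mu_H(E)=0$ of rank $1$ or $2$, and then compare the next Chern characters.

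The main step is to translate subsheaves of $F_\Gm$ into subobjects relevant to $\I_{\Gm/S}$. Given $0\neq E\subset F_\Gm$ with torsion-free quotient, compose with \eqref{eqn:Fkerev} to get $E\to\O_Y^{\oplus 3}$; the cokernel $G$ of $E\hookrightarrow\O_Y^{\oplus 3}$ then surjects onto $\I_{\Gm/S}(2H)$, and the kernel of $G\twoheadrightarrow\I_{\Gm/S}(2H)$ is $F_\Gm/E$. Dually, a quotient $F_\Gm\twoheadrightarrow G''$ corresponds, after applying $\cHom(-,\O_Y)$ and relating to the mutation description, to a subobject of (a twist of) $\I_{\Gm/S}$ inside its ambient structure sheaf $i_*\O_S$; since $\I_{\Gm/S}$ is stable of rank one on the integral surface $S$, its only subsheaves with the same slope are ideal-sheaf-like, and this forces strong constraints on $G''$. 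The cleanest route is probably: show that a $\mu_H$-semistable-of-slope-zero destabilizing piece would produce a nonzero map $\O_Y\to F_\Gm$ or $\O_Y^{\oplus 2}\to F_\Gm$ (using $\rk E\leq 2$ and $\mu_H(E)=0$ together with $F_\Gm\subset\O_Y^{\oplus3}$), hence $h^0(Y,F_\Gm)\neq 0$, contradicting Lemma~\ref{lem:FinTY}; analogously a rank-one or rank-two quotient of the same slope, after dualizing, gives a nonzero section of $F_\Gm^\vee$, i.e.\ a nonzero map $F_\Gm\to\O_Y$, which by \eqref{eqn:Fkerev} and $\Hom(\I_{\Gm/S}(2H),\O_Y)=0$ (the left-hand sheaf being torsion) forces the map to factor through $\O_Y^{\oplus 3}\to\O_Y$, contradicting stability of $\I_{\Gm/S}(2H)$ as a quotient.

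The delicate point, and the one I expect to be the main obstacle, is the borderline case where the potential destabilizing subsheaf has the \emph{same} reduced Hilbert polynomial as $F_\Gm$ only up to lower-order terms, i.e.\ matching $\mu_H$ and $\ch_2$; there the Hom-vanishing arguments above do not immediately apply and one has to compare $\ch_3$ and $\ch_4$ carefully and invoke torsion-freeness of $F_\Gm$ together with the precise shape of $\I_{\Gm/S}$ on $S$ (using that $S$ is integral, as in the proof of Proposition~\ref{prop:ideals}). I would handle this by passing to the double dual or to the restriction to $S$ and using that a rank-one torsion-free quotient of $F_\Gm$ of slope zero would correspond to a subsheaf of $\I_{\Gm/S}$ of the same reduced Hilbert polynomial, which is impossible since $\I_{\Gm/S}$ is Gieseker stable of rank one on $S$. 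Finally, since stability is an open condition and $M_3(Y)$ (equivalently $Z'(Y)$) is irreducible, it would in fact suffice to verify the argument for $\Gm$ aCM, where \eqref{eqn:extideals} and the explicit resolution in Lemma~\ref{lem:FinTY} make all the cohomological inputs transparent, and then conclude for non-CM curves either by the same Hom-vanishing (noting Remark~\ref{rmk:nonT} only affects $\Hom(F_\Gm,\O_Y(-H)[1])$, not $\Hom(\O_Y,F_\Gm)$ or $\Hom(F_\Gm,\O_Y)$) or by semicontinuity.
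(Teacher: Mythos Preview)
Your sketch has a genuine gap at the two key claims in the second paragraph. You assert that a saturated slope-zero subsheaf $E\subset F_\Gm$ of rank one or two ``would produce a nonzero map $\O_Y\to F_\Gm$ or $\O_Y^{\oplus 2}\to F_\Gm$.'' But a rank-one torsion-free sheaf of slope zero has the form $\I_{W/Y}$ with $\codim W\geq 2$, and an inclusion $\I_{W/Y}\hookrightarrow F_\Gm$ yields a section of $F_\Gm$ only when $W=\emptyset$; nothing in your argument forces this. (The saturation of $\I_{W/Y}$ inside $\O_Y^{\oplus 3}$ is indeed $\O_Y$, but this copy of $\O_Y$ is \emph{not} contained in $F_\Gm$: its image in $\I_{\Gm/S}(2H)$ is a nonzero section.) The dual claim fares worse: applying $\Hom(-,\O_Y)$ to \eqref{eqn:Fkerev} and using $\Ext^1(\I_{\Gm/S}(2H),\O_Y)=0$ one finds $\Hom(F_\Gm,\O_Y)\cong\Hom(\O_Y^{\oplus 3},\O_Y)\cong\CC^3$, so the existence of a nonzero morphism $F_\Gm\to\O_Y$ is no contradiction at all; these are exactly the restrictions of the projections $\O_Y^{\oplus 3}\to\O_Y$, and none of them is surjective.

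What is missing is precisely the step you set up but do not carry out: for a saturated $A\subset F_\Gm$, pass to its saturation inside $\O_Y^{\oplus 3}$, identify it (as $\O_Y$ in rank one, $\O_Y^{\oplus 2}$ in rank two after excluding other slopes), and analyze the resulting injection of the torsion quotient $\O_W$ into $\I_{\Gm/S}(2H)$. Integrality of $S$ then forces $W\supseteq S$ in the rank-one case and $W=S$ in the rank-two case, and a direct Hilbert-polynomial comparison excludes destabilization. Finally, your fallback via openness of stability does not close the gap either: the aCM locus is the \emph{open} part of $M_3(Y)$, so stability there says nothing about the closed non-CM divisor.
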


\begin{proof}
For sake of simplicity, let us just write $F$ for $F_\Gm$.
We need to show that the reduced Hilbert polynomial of any non-trivial proper saturated subsheaf $A\subset F$ satisfies $p(A,n)< p(F,n)$.
As $F$ has rank three, the subsheaf $A$ has rank one or rank two.

\smallskip

\noindent{\it Case $\rk(A)=1$.} As $A$ is torsion free, it has the form $A=\I_{W/Y}(m)$ for
some twist $m\in \ZZ$ and a subscheme $W\subset Y$ of codimension greater or equal than two.
The leading terms of $p(A,n)$ are $\frac{1}8n^4+(\frac m2+\frac 34)n^3+\ldots$.
Since $A$ is a subsheaf of $\O_Y^3$ as well, one has $m\leq 0$.
But if $m<0$, then $A$ is not destabilizing.
Hence, only the case $m=0$ and $A=\I_{W/Y}$ requires further consideration.
Since $h^0(Y,F)=0$, the subscheme $W\subset Y$ is non-empty.

Let $L$ denote the saturation of $\I_{W/Y}$ in $\O_Y^3$.
Then $L$ is a reflexive sheaf of rank one and hence invertible.
This shows that $L\cong\O_Y$.
We obtain a commutative diagram
\[
\xymatrix{
0\ar[r]&F\ar[r]&\O_Y^{\oplus 3}\ar[r]&\I_{\Gm/S}(2H)\ar[r]&0\\
0\ar[r]&\I_{W/Y}\ar[r]\ar[u]&\O_Y\ar[r]\ar[u]&\O_W\ar[r]\ar[u]&0
}
\]
of short exact sequences.
By assumption, $\I_{W/Y}$ is saturated in $F$ so that the quotient $F/\I_{W/Y}$ is torsion free and the map $\O_W\to \I_{\Gm/S}(2H)$ is injective.
Since $S$ is irreducible and reduced, this forces $W$ to contain $S$, so that
\[
P(\O_W,n)\geq P(\O_S,n)=\frac12(3n^2+3n+2)
\]
and
\[
P(\I_{W/Y},n)\leq \frac 18 n^4 + \frac 34 n^3 + \frac 38 n^2 + \frac 34 n< p(F,n)
\]
which completes the analysis in this case.

\smallskip

\noindent{\it Case $\rk(A)=2$.} Let $R\subset \O_Y^{\oplus 3}$ be the saturation of $A$ in $\O_Y^{\oplus 3}$.
The quotients $F/A$ and $\O_Y^{\oplus 3}/R$ are torsion free sheaves of rank one and therefore have the form $F/A\cong\I_{W/Y}(mH)$ and $\O_Y^{\oplus 3}/R\cong \I_{W'/Y}(m'H)$ for some integers $m,m'\in\ZZ$ and subschemes $W,W'\subset Y$ of codimension two.
As $F$ and $\O_Y^{\oplus 3}$ are isomorphic outside a codimension two locus, the same is true for $A$ and $R$ and for $\I_{W/Y}(mH)$ and $\I_{W'/Y}(m'H)$.
In particular, $m=m'$.
As $\I_{W'/Y}(mH)$ is globally generated, one has $m\geq 0$, and if $m>0$, the sheaf $\I_{W/Y}(mH)$ is not a destabilizing quotient of $F$.
This implies that $m=0$, and since $\I_{W'/Y}$ is globally generated, $W'=\emptyset$.
We obtain a commutative diagram
\[
\xymatrix{
&0&0&0&\\
0\ar[r]&\I_{W/Y}\ar[r]\ar[u]&\O_Y\ar[r]\ar[u]&\O_W\ar[r]\ar[u]&0\\
0\ar[r]&F\ar[r]\ar[u]&\O_Y^{\oplus 3}\ar[r]\ar[u]&\I_{\Gm/S}(2H)\ar[r]\ar[u]&0\\
0\ar[r]&A\ar[r]\ar[u]&\O_Y^{\oplus 2}\ar[r]\ar[u]&Q'\ar[r]\ar[u]&0\\
&0\ar[u]&0\ar[u]&0\ar[u]&\\
}
\]
with exact lines and columns.
Since $S$ is irreducible and reduced, the map $\O_Y^{\oplus 3}\to \I_{C/S}(2H)$ factors through $\O_S^{\oplus 3}$. This induces a surjection $\O_S\to \O_W$, namely $W$ must be a subscheme of $S$.
If $\I_{W/Y}$ is assumed to be a destabilizing quotient of $F$, we must have $p(\I_{W/Y},n)\leq p(F,n)$ or, equivalently, $P(\O_W,n)\geq p(\O_Y,n)-p(F,n)=\binom{n+2}{2}$.
This shows that $W$ is $2$-dimensional and hence equals $S$.
In this case, the support of the kernel $Q'$ is $1$-dimensional, which is impossible since $\I_{\Gm/S}$ is pure of dimension two.
Hence, we are done with the second case as well.
\end{proof}


\begin{proof}[Proof of Part~\eqref{item:main2a} of the Main Theorem]
By Proposition~\ref{prop:ideals}, we already know that $Z'(Y)$ is isomorphic to $\mJ_1$.
Thus, we just need to show that there is an irreducible component $\mM_1$ of the moduli space $\mM$ of stable sheaves with reduced Hilbert polynomial \eqref{eqn:HilbPol} isomorphic to $\mJ_1$.

By Remark~\ref{rem:L1}, the construction of $F_\Gm$ from $\I_{\Gm/S}(2H)$ is functorial and commutes with base change in flat families of generalized twisted cubics.
Thus, by Proposition~\ref{prop:stabF}, it defines a morphisms $f\colon\mJ_1\to\mM_1$, where $\mM_1$ is indeed an irreducible component of $\mM$.

Let us first prove that $f$ is bijective.
Indeed, since the support of the quotient $\O_Y^{\oplus 3}/F_\Gm$ has codimension two in $Y$, the inclusion $F_\Gm\to \O_Y^{\oplus 3}$ is isomorphic to the natural embedding $F_\Gm\to F_\Gm{\ddual}$.
In particular, $\I_{\Gm/S}(2H)$ can be reconstructed
from $F_\Gm$ as the quotient $F_\Gm\ddual/F_\Gm$.

Now we can prove that the differential $\mathrm{d}f$ of $f$ is injective.
Using the identifications of the tangent space of $\mJ_1$ in $\I_{\Gm/S}$ with $\Ext^1(\I_{\Gm/S},\I_{\Gm/S})$ and of $\mM$ at $F_\Gm$ with $\Ext^1(F_\Gm,F_\Gm)$, the differential $\mathrm{d}f$ is defined as follows.
Given $v\in\Ext^1(\I_{\Gm/S},\I_{\Gm/S})$, there exists $w\in\Ext^1(F_\Gm,F_\Gm)$ making the following diagram of distinguished triangles commutative
\[
\xymatrix{
F_\Gm\ar[r]\ar[d]^-{w}&\O_Y^{\oplus 3}\ar[d]^-{0}\ar[r]&\I_{\Gm/S}(2H)\ar[d]^-{v}\ar[r]&F_\Gm[1]\ar[d]^-{w[1]}\\
F_\Gm[1]\ar[r]&\O_Y^{\oplus 3}[1]\ar[r]&\I_{\Gm/S}(2H)[1]\ar[r]&F_\Gm[2].
}
\]
Indeed, the existence and uniqueness of $w$ is due to the fact that $\hom^i(\O_Y,F_\Gm)=h^i(Y,F_\Gm)=0$ for all $i$.
Hence, $\mathrm{d}f$ sends $v$ to $w$.
The injectivity of $\mathrm{d}f$ depends on the fact that $w$ is uniquely determined by $v$, as by Serre duality we have $\Ext^1(\I_{\Gm/S}(2H),\O_Y)\cong\Ext^3(\O_Y,\I_{\Gm/S}(-H))\dual$ and the latter space is actually trivial, being $\I_{\Gm/S}$ supported on the surface $S$.

So far we have that $f$ induces an isomorphism of $\mJ_1$ onto its image.
But if $\Gm$ is an aCM curve, then by Lemma~\ref{lem:FinTY}, the sheaf $F_\Gm$ is in $\cat{T}_Y$.
Thus, we have
\begin{equation*}
\Hom^i(F_\Gm,F_\Gm)\cong\Hom^{i+1}(\I_{\Gm/S}(2H),F_\Gm),
\end{equation*}
for $i=0,1,2$.
Moreover, by Serre duality, we get $\Hom(\I_{\Gm/S},\I_{\Gm/S})\cong\Ext^1(\I_{\Gm/S}(2H),F_\Gm)$ and the long exact sequence
\begin{equation}\label{eqn:les1}
\begin{split}
0\to &\Ext^1(\I_{\Gm/S},\I_{\Gm/S})\to \Ext^2(\I_{\Gm/S}(2H),F_\Gm)\to \Ext^2(\I_{\Gm/S}(2H),\O_Y^{\oplus 3})\to\\
\to &\Ext^2(\I_{\Gm/S},\I_{\Gm/S})\to \Ext^3(\I_{\Gm/S}(2H),F_\Gm)\to 0.
\end{split}
\end{equation}
On the one hand, we have the natural isomorphisms
\[
\Ext^i(\I_{\Gm/S}(2H),\O_Y)\cong H^{4-i}(Y,\I_{\Gm/S}(-H))\cong
\begin{cases}
\CC^3 &\text{if } i=2\\ 0&\text{otherwise}
\end{cases}
\]
and we have already computed \eqref{eqn:extideals}.
Using again that $F_\Gm\in \cat{T}_Y$, we have
\[
\Ext^2(F_\Gm,F_\Gm)\cong\Hom(F_\Gm,F_\Gm)\cong\Hom(\I_{\Gm/S},\I_{\Gm/S})\cong\CC.
\]
Therefore, the long exact sequence \eqref{eqn:les1} becomes:
\begin{equation*}
0\to \CC^8\to \Ext^2(\I_{\Gm/S}(2H),F_\Gm)\to \CC^9\to\CC^{10}\to \CC\to 0.
\end{equation*}

In conclusion $f$ induces an isomorphism $\Ext^1(F_\Gm,F_\Gm)\cong\Ext^2(\I_{\Gm/S}(2H),F_\Gm)\cong\CC^8$ and thus $\mathrm{d}f$ is an isomorphism as well.
Hence, $f$ induces an isomorphism between $\mJ_1$ and the irreducible component $\mM_1$.
\end{proof}

\subsection{The second family}\label{subsect:secondfam}

We now want to show that $\mM$ contains at least another irreducible component $\mM_2$.
The discussion here goes along the same lines as in Sections~\ref{subsec:proj} and \ref{subsec:proj2}.
Thus, we will be a bit quicker explaining the arguments.

We pick a point $p\in Y$ and a linear $3$-dimensional subspace $U\subset \PP^5$ that passes through $p$.
Then  $\I_{p/S}(H)$ has exactly three linearly independent global sections, the evaluation map
\[
\xymatrix{
H^0(Y,\I_{p/S}(H))\otimes\O_Y\ar[r]^-{\mathrm{ev_p}}&\I_{p/S}(H)
}
\]
is surjective and we get a rank three torsion free sheaf $E_p:=\ker(\mathrm{ev}_p)$.
Again, $E_p$ is obtained by applying the functor $\cat{L}_{\O_Y}\left(-\otimes\O_Y(H) \right)[-1]$ to the sheaf $\I_{p/S}$.
The reduced Hilbert polynomial of $E_p$ is the same as in \eqref{eqn:HilbPol}.
Moreover we have the following.

\begin{prop}\label{prop:stabE}
The sheaf $E_p$ is Gieseker stable for all $p\in Y$.
\end{prop}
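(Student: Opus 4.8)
The plan is to mimic the proof of Proposition~\ref{prop:stabF} very closely, since $E_p$ and $F_\Gm$ are constructed by the same recipe (apply $\cat{L}_{\O_Y}(-\otimes\O_Y(H))[-1]$, equivalently take the kernel of the evaluation map of a globally generated twist of a rank-one torsion sheaf on the cubic surface $S$) and share the same reduced Hilbert polynomial \eqref{eqn:HilbPol}. So suppose $A\subset E_p$ is a non-trivial proper saturated subsheaf; we must show $p(A,n)<p(E_p,n)$. Since $E_p$ has rank three, $A$ has rank one or two, and we run the same two cases.

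In the rank-one case, torsion-freeness forces $A=\I_{W/Y}(mH)$ with $W\subset Y$ of codimension $\geq 2$; since $A\hookrightarrow\O_Y^{\oplus 3}$ we get $m\leq 0$, and $m<0$ is not destabilizing, so only $m=0$, $A=\I_{W/Y}$ remains. As $h^0(Y,E_p)=0$ (from the construction of $E_p$), $W$ is non-empty; saturating $\I_{W/Y}$ in $\O_Y^{\oplus 3}$ produces a reflexive rank-one, hence invertible, subsheaf, which must be $\O_Y$. Exactly as in Proposition~\ref{prop:stabF} we obtain a commutative diagram of short exact sequences comparing $0\to E_p\to\O_Y^{\oplus 3}\to\I_{p/S}(H)\to 0$ with $0\to\I_{W/Y}\to\O_Y\to\O_W\to 0$, the saturation hypothesis forces the induced map $\O_W\to\I_{p/S}(H)$ to be injective, and since $S$ is integral this forces $S\subseteq W$. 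Then $P(\O_W,n)\geq P(\O_S,n)=\tfrac12(3n^2+3n+2)$ gives $P(\I_{W/Y},n)\leq\tfrac18n^4+\tfrac34n^3+\tfrac38n^2+\tfrac34n<p(E_p,n)$, so $A$ is not destabilizing.

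In the rank-two case, let $R\subset\O_Y^{\oplus 3}$ be the saturation of $A$; the quotients are rank-one torsion free, $E_p/A\cong\I_{W/Y}(mH)$ and $\O_Y^{\oplus 3}/R\cong\I_{W'/Y}(m'H)$, and since $E_p$ and $\O_Y^{\oplus 3}$ agree off a codimension-two locus we get $m=m'$. Global generation of $\I_{W'/Y}(mH)$ gives $m\geq 0$, and $m>0$ is not destabilizing, so $m=0$ and then $W'=\emptyset$. The same $3\times 3$ diagram with exact rows and columns as in Proposition~\ref{prop:stabF} applies, with $\I_{C/S}(2H)$ replaced by $\I_{p/S}(H)$: integrality of $S$ makes $\O_Y^{\oplus 3}\to\I_{p/S}(H)$ factor through $\O_S^{\oplus 3}$, giving a surjection $\O_S\to\O_W$, so $W\subseteq S$. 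If $\I_{W/Y}$ destabilizes $E_p$ then $p(\I_{W/Y},n)\leq p(E_p,n)$, i.e.\ $P(\O_W,n)\geq p(\O_Y,n)-p(E_p,n)=\binom{n+2}{2}$, forcing $W$ to be two-dimensional and hence $W=S$; but then the kernel $Q'$ of $\O_S^{\oplus 2}\to\I_{p/S}(H)$ is supported in dimension one, contradicting the purity of $\I_{p/S}$ (a rank-one torsion free sheaf on the integral surface $S$), which finishes this case.

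I expect there is essentially no serious obstacle here: the only point requiring a moment of care is the purity statement in the rank-two case, namely that $\I_{p/S}(H)$, being (the pushforward of) a rank-one torsion free sheaf on the integral two-dimensional scheme $S$, is pure of dimension two, so it cannot contain a subsheaf supported in dimension one; and the analogous vanishing $h^0(Y,E_p)=0$, which follows because $E_p=\cat{L}_{\O_Y}(-\otimes\O_Y(H))[-1](\I_{p/S})$ and $H^0(Y,\I_{p/S}(H))$ exactly cancels the three sections of $\O_Y^{\oplus 3}$ in degree zero. Everything else is a verbatim transcription of the argument for $F_\Gm$, with the numerology unchanged because the reduced Hilbert polynomial is the same.
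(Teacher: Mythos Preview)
Your proposal is correct and takes exactly the approach the paper intends: the paper's own proof simply reads ``The argument is exactly the same as in Proposition~\ref{prop:stabF} substituting $\I_{\Gm/S}(2H)$ by $\I_{p/S}(H)$,'' and you have faithfully carried out that substitution in both the rank-one and rank-two cases, with the numerics and the purity argument going through unchanged.
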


\begin{proof}
The argument is exactly the same as in Proposition~\ref{prop:stabF} substituting $\I_{\Gm/S}(2H)$ by $\I_{p/S}(H)$.
Thus, we leave the easy check to the reader.
\end{proof}

Varying the point $p\in Y$ and the $3$-dimensional projective space containing it produces another sheaves contained in another irreducible component $\mM_2$ inside $\mM$.
More precisely,  the irreducible component $\mJ_2$ described in Remark~\ref{rmk:seccomp1} injects into $\mM_2$ of $\mM$.
Here, the argument is very similar to the one in the previous Section~\ref{subsec:proj2}.
Indeed, the procedure that associates $E_p$ to $\I_{p/S}(H)$ described above yields a morphism $g\colon\mJ_2\to\mM_2$ which is bijective onto its image because, again, the ideal $\I_{p/S}(H)$ can be reconstructed from $E_p$ as the quotient $E_p\ddual/E_p$.

\section{{aCM} twisted cubics and aCM bundles}\label{sect:acm}

In this section we associate an aCM bundle to an aCM curve in a cubic fourfold $Y$ not containing a plane.
For this, we need some general results in \cite{LMS1} which we recall in Section~\ref{subsec:ACM}.
The Gieseker stability of this aCM bundle will be discussed in Section~\ref{subsec:Chern}.

\subsection{{aCM} bundles on cubics}\label{subsec:ACM}

Let us briefly summarize some general results from \cite{LMS1} which have a sort of general flavour and apply to any smooth cubic hypersurface.
To begin with, consider the following.

\begin{defn}\label{def:ACMbalanced}
A vector bundle $F$ on a smooth projective variety $X$ of dimension $n$ is \emph{arithmetically Cohen-Macaulay} (aCM) if $\dim H^i(X,F(jH))=0$ for all $i=1,\ldots,n-1$ and all $j\in\ZZ$.
\end{defn}

The existence of families of (stable) aCM bundles is in general related to the so-called representation type of a variety.
It is in general not that easy to produce such families.
An example of a two dimensional family of (Gieseker) stable aCM vector bundles on a cubic fourfold containing a plane was exhibited in \cite[Theorem~A]{LMS}.
To get such a result we used a simple criterion that we recall here below.

\begin{lem}[{\cite[Lemma~1.9]{LMS1}}]\label{lem:viceversa}
Let $Y\subset \PP^{n+1}$ be a smooth cubic $n$-fold and let $F\in \coh(Y) \cap \cat{T}_Y$.
Assume
\begin{equation}\label{eqn:vanishing1}
\begin{split}
& H^1(Y,F(H))=0\\
& H^1(Y,F((1-n)H))=\ldots=H^{n-1}(Y,F((1-n)H))=0.
\end{split}
\end{equation}
Then $F$ is an aCM bundle.
\end{lem}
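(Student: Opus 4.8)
The plan is to exploit the semiorthogonality hypothesis $F\in\cat{T}_Y$ together with the exceptional collection $\langle\O_Y,\O_Y(H),\O_Y(2H)\rangle$ to get vanishing of $H^\bullet(Y,F(jH))$ for the three twists $j=0,1,2$ for free, and then to propagate this vanishing to all $j\in\ZZ$ by an induction using the standard exact sequences coming from a hyperplane section. Concretely, $F\in\cat{T}_Y$ means $\Hom^p(\O_Y(iH),F)=H^p(Y,F(-iH))=0$ for all $p$ and $i=0,1,2$, so the Hilbert function of $F$ already vanishes identically at $j=0,-1,-2$. The point of the lemma is therefore to upgrade the finitely many additional assumed vanishings in \eqref{eqn:vanishing1} to the full aCM condition $H^i(Y,F(jH))=0$ for $1\le i\le n-1$ and \emph{all} $j\in\ZZ$.

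First I would restrict $F$ to a smooth hyperplane section $Y'=Y\cap\PP^n$, which is a smooth cubic $(n-1)$-fold; from the sequence
\[
0\to F((j-1)H)\to F(jH)\to F|_{Y'}(jH)\to 0
\]
one gets long exact sequences relating $H^i(Y,F(jH))$, $H^i(Y,F((j-1)H))$ and $H^i(Y',F|_{Y'}(jH))$. The strategy is a two-step ascent/descent: using the vanishings at the three ``central'' twists $j=0,-1,-2$ coming from $F\in\cat{T}_Y$ and the vanishing $H^1(Y,F(H))=0$, one feeds these into the long exact sequence to climb upward in $j$ (showing $H^i(Y,F(jH))=0$ for $j\ge 1$, all $1\le i\le n-1$), while the vanishings $H^i(Y,F((1-n)H))=0$ for $1\le i\le n-1$ together with the central ones let one descend in $j$ (showing the vanishing for $j\le -3$). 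The intermediate twists $j=-1,-2$ are already covered. The key combinatorial input is that the ``window'' $\{(1-n)H,\dots,0\}$ has enough room — together with the extra twist by $H$ — that every cohomology group $H^i$ with $1\le i\le n-1$ gets pinned down as one marches $j$ across $\ZZ$, because at each step the connecting map sandwiches $H^i(Y,F(jH))$ between a group known to vanish on $Y$ and one on $Y'$, and one can set up a simultaneous induction on $|j|$ and (when needed) on the dimension $n$, the base case being an explicit check on cubic surfaces/threefolds.

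For the descending direction it is cleaner to dualize: by Serre duality on $Y$ (where $\omega_Y\cong\O_Y(-(n-1)H)$) one has $H^i(Y,F(jH))\cong H^{n-i}(Y,F^\vee((-(n-1)-j)H))^\vee$, so aCM-ness of $F$ is equivalent to aCM-ness of $F^\vee$, and the hypothesis $H^1(Y,F(H))=0$ dualizes to a top-degree statement about $F^\vee$; one checks $F^\vee\in\cat{T}_Y$ as well (the left dual of an exceptional collection of line bundles is again such), so the same ascending argument applied to $F^\vee$ handles the negative twists for $F$. This symmetry is what makes the asymmetric-looking hypotheses in \eqref{eqn:vanishing1} suffice.

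The main obstacle I anticipate is bookkeeping the induction so that the single extra twist $H^1(Y,F(H))=0$ really does suffice to start the upward climb for \emph{all} intermediate cohomological degrees $i$, not just $i=1$: one needs to verify that once $H^i(Y,F(jH))$ is known to vanish for the initial block of $j$'s, the hyperplane-section sequence forces $H^i(Y,F((j+1)H))\hookrightarrow H^{i}(Y',F|_{Y'}((j+1)H))$, and then invoke the inductive hypothesis on $Y'$ — which requires checking that $F|_{Y'}$ lies in $\cat{T}_{Y'}$ and satisfies the analogue of \eqref{eqn:vanishing1}. Verifying $F|_{Y'}\in\cat{T}_{Y'}$ and that its cohomology vanishings are inherited from those of $F$ is the technical heart; once that is in place the rest is a routine diagram chase through the long exact sequences, and the conclusion that $F$ is aCM follows, the local-freeness being automatic since an aCM sheaf that is a vector bundle is assumed, or can be deduced from reflexivity plus the vanishing of intermediate cohomology in a neighborhood argument.
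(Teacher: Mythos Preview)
The paper does not prove this lemma; it is quoted from \cite[Lemma~1.9]{LMS1} without argument, so there is no in-paper proof to compare against. On its own merits your sketch has two genuine gaps.

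First, the claim that $F^\vee\in\cat{T}_Y$ is false for $n\ge 4$. Derived dualization sends $\cat{T}_Y=\langle\O_Y,\O_Y(H),\O_Y(2H)\rangle^\perp$ to ${}^\perp\langle\O_Y,\O_Y(-H),\O_Y(-2H)\rangle$, which by Serre duality (using $\omega_Y\cong\O_Y((1-n)H)$) equals $\langle\O_Y((n-3)H),\O_Y((n-2)H),\O_Y((n-1)H)\rangle^\perp$. For $n=3$ this happens to coincide with $\cat{T}_Y$, but for $n\ge4$ it does not: concretely, $F^\vee\in\cat{T}_Y$ would require $H^\bullet(Y,F((j)H))=0$ for $j=1-n,2-n,3-n$, and the hypotheses give only the \emph{intermediate} cohomology at $j=1-n$. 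The parenthetical about left duals of exceptional collections does not rescue this --- the left dual of $(\O_Y,\O_Y(H),\O_Y(2H))$ is not a collection of line bundles. So the ``symmetry'' you invoke for the descending direction collapses, and with it the claim that the asymmetric hypotheses in \eqref{eqn:vanishing1} are self-dual. A related problem: you apply Serre duality to $F^\vee$ before establishing that $F$ is locally free, but local-freeness is part of the \emph{conclusion}, and your final sentence about it is not an argument.

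Second, the step you yourself flag as the ``technical heart'' --- showing $F|_{Y'}\in\cat{T}_{Y'}$ --- does not go through. From $0\to F(-3H)\to F(-2H)\to F|_{Y'}(-2H)\to 0$ and $H^\bullet(Y,F(-2H))=0$ one gets $H^i(Y',F|_{Y'}(-2H))\cong H^{i+1}(Y,F(-3H))$. Taking $n=4$ for concreteness, the hypothesis only kills $H^1,H^2,H^3$ of $F(-3H)$, so you cannot conclude $H^3(Y',F|_{Y'}(-2H))\cong H^4(Y,F(-3H))$ vanishes, and hence cannot place $F|_{Y'}$ in $\cat{T}_{Y'}$. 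The induction on $n$ therefore never starts. The hyperplane-section idea is natural, but the bookkeeping you anticipated as an obstacle is in fact an obstruction: the available vanishings are not enough to feed the inductive hypothesis in the form you propose.
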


The idea is that, in presence of a cubic hypersurface, one can show that a sheaf is an aCM vector bundle just by proving much less cohomology vanishings.

\subsection{The second mutation: producing aCM bundles in $\mathbf{T}_Y$}\label{subsec:acmcurveacmbundle}

Let $\Gm$ be an aCM twisted cubic in $Y$ and let $S=\gen{\Gm}\cap Y$ the cubic surface containing $\Gm$.
Consider
\begin{equation*}
M_\Gm:=\cat{L}_{\O_Y}\left(\cat{L}_{\O_Y}\left(\I_{\Gm/S}(2H) \right)\otimes \O_Y(H)\right)[-2].
\end{equation*}
We also set
\begin{equation}\label{eqn:laGC} G_\Gm:=\ker\left(\O_S(H)^{\oplus 3}\xrightarrow{\on{ev}} \I_{\Gm/S}(3H)\right).\end{equation}

\begin{rem}\label{rmk:autoeq}
It is not difficult to see that the functor $\cat{L}_{\O_Y}\left(-\otimes \O_Y(H)\right)[-1]$ is an autoequivalence of $\cat{T}_Y$.
\end{rem}

\begin{lem}\label{lem:Mfeix}
The object $M_\Gm$ is a rank 6 aCM bundle in $\cat{T}_Y$.
\end{lem}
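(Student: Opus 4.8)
The plan is to identify $M_\Gm$ explicitly with a mutation of the bundle $G_\Gm$ defined in \eqref{eqn:laGC}, and then to verify the aCM property using the criterion of Lemma~\ref{lem:viceversa}. First I would compute the rank and check that $M_\Gm$ is actually a sheaf (placed in degree zero), not just a complex. By Lemma~\ref{lem:FinTY}, since $\Gm$ is aCM, $F_\Gm = \cat{L}_{\O_Y}(\I_{\Gm/S}(H)[1])$ (equivalently $\cat{L}_{\O_Y}(\I_{\Gm/S}(2H))$ up to the conventions fixed in Remark~\ref{rem:L1}) lies in $\cat{T}_Y$ and has vanishing cohomology; it is the rank three torsion free sheaf sitting in \eqref{eqn:Fkerev}. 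Applying the autoequivalence $\cat{L}_{\O_Y}(-\otimes\O_Y(H))[-1]$ of $\cat{T}_Y$ from Remark~\ref{rmk:autoeq} to $F_\Gm$ gives $M_\Gm$, so a priori $M_\Gm \in \cat{T}_Y$ is just an object; the rank is $\rk = 3\cdot\rk(\O_Y(H)^{\oplus?}) - \ldots$, and one checks it equals $6$ by a Chern character computation (this matches $\mathbf{v}_3$ of the Main Theorem after the identification $\ch(M_\Gm) = \mathbf{v}_3$, up to the twist relating the two mutations).

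The key step is to show $M_\Gm$ is a locally free sheaf, and I would do this by presenting it concretely. Tensoring \eqref{eqn:Fkerev} by $\O_Y(H)$ and applying $\cat{L}_{\O_Y}$: we have $F_\Gm(H) \to \O_Y(H)^{\oplus 3} \to \I_{\Gm/S}(3H)$, and since $h^i(Y,\I_{\Gm/S}(3H)) = h^i(S,\I_{\Gm/S}(3H))$ can be read off from \eqref{eqn:calcHPI} (giving $h^0 = 9$, $h^{>0}=0$), the left mutation $\cat{L}_{\O_Y}(\I_{\Gm/S}(3H))$ is $\ker(\O_Y^{\oplus 9} \to \I_{\Gm/S}(3H))$ shifted appropriately, i.e. a genuine sheaf. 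Chasing the octahedron relating the two triangles, $M_\Gm[2] = \cat{L}_{\O_Y}(\cat{L}_{\O_Y}(\I_{\Gm/S}(2H))\otimes\O_Y(H))$ fits in a triangle with $F_\Gm(H)$ and $\cat{L}_{\O_Y}(\I_{\Gm/S}(3H))$; since both of the latter are sheaves with controlled cohomology, $M_\Gm$ is a sheaf. Because $F_\Gm$ is torsion free and the mutation by a line bundle preserves this outside codimension two, $M_\Gm$ is torsion free of rank $6$; then reflexivity plus the vanishing of lower cohomology forces local freeness. Concretely I expect $M_\Gm$ to be isomorphic (up to a twist) to $G_\Gm$ of \eqref{eqn:laGC}, viewed as a sheaf on $Y$ via $i_*$ — no wait, $G_\Gm$ is a rank two sheaf on $S$, so rather $M_\Gm$ is built as an extension of $i_*G_\Gm$-type pieces by a bundle on $Y$; the role of $G_\Gm$ in the statement suggests it appears as the restriction $M_\Gm|_S$ or in a short exact sequence $0 \to M_\Gm \to \O_Y(-H)^{\oplus a} \to i_*G_\Gm \to 0$ after a twist, which is what I would pin down.

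Finally, to prove the aCM property I would invoke Lemma~\ref{lem:viceversa} with $n=4$: since $M_\Gm \in \coh(Y)\cap\cat{T}_Y$, it suffices to check $H^1(Y, M_\Gm(H)) = 0$ and $H^i(Y, M_\Gm(-3H)) = 0$ for $i=1,2,3$. Each of these can be translated, via the two defining mutation triangles and adjunction/Serre duality on $\cat{T}_Y$ (whose Serre functor is the shift by $2$), into cohomology of twists of $\I_{\Gm/S}$ on the surface $S$, all of which are computed in \eqref{eqn:calcHPI} together with the aCM resolution $0 \to \O_{\PP^3}(-3H)^{\oplus 3} \to \O_{\PP^3}(-2H)^{\oplus 3} \to \I_{\Gm/S} \to 0$ quoted in the proof of Lemma~\ref{lem:FinTY}. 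The main obstacle I anticipate is purely bookkeeping: correctly tracking the shifts and twists through the double mutation and the octahedral axiom so that $M_\Gm$ lands in degree zero and the hypotheses of Lemma~\ref{lem:viceversa} come out as exactly the vanishings available from \eqref{eqn:calcHPI} and the resolution — there is no deep geometric input beyond what is already established for $F_\Gm$, but the sign/degree conventions for left mutations must be handled with care.
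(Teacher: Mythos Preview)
Your overall strategy is correct and matches the paper's: show $M_\Gm\in\cat{T}_Y$ via the autoequivalence of Remark~\ref{rmk:autoeq}, show it is an honest sheaf, and then apply Lemma~\ref{lem:viceversa} with $n=4$ using the vanishings available from \eqref{eqn:calcHPI} and Lemma~\ref{lem:FinTY}. Two points, however, need correction.

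First, you do not need a separate argument for local freeness, and the one you sketch (``reflexivity plus vanishing of lower cohomology forces local freeness'') is not valid on a fourfold: reflexive sheaves on a smooth fourfold can fail to be locally free along a locus of codimension $\leq 3$. The point is that Lemma~\ref{lem:viceversa} already delivers local freeness as part of its conclusion: its hypothesis is only $F\in\coh(Y)\cap\cat{T}_Y$ together with the listed vanishings, and its conclusion is that $F$ is an aCM \emph{bundle}. So once you know $M_\Gm$ is a sheaf in $\cat{T}_Y$ and you have checked $H^1(Y,M_\Gm(H))=0$ and $H^i(Y,M_\Gm(-3H))=0$ for $i=1,2,3$, you are done.

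Second, your speculation about the role of $G_\Gm$ is off, and the paper's route to ``$M_\Gm$ is a sheaf'' is cleaner than the octahedral bookkeeping you propose. The paper simply proves that $F_\Gm(H)$ is globally generated; then $M_\Gm=\ker\bigl(\mathrm{ev}\colon\O_Y^{\oplus 9}\twoheadrightarrow F_\Gm(H)\bigr)$ is manifestly a sheaf (and $h^0(F_\Gm(H))=9$ follows from your own computation). Global generation of $F_\Gm(H)$ is where $G_\Gm$ enters: restricting the surjection $\O_Y(H)^{\oplus 3}\to\I_{\Gm/S}(3H)$ to $S$ gives a short exact sequence
\[
0\longrightarrow \I_{S/Y}(H)^{\oplus 3}\longrightarrow F_\Gm(H)\longrightarrow G_\Gm\longrightarrow 0,
\]
so it suffices to show $G_\Gm$ is globally generated. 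Tensoring the aCM resolution $0\to\O_{\PP^3}(-3H)^{\oplus 3}\to\O_{\PP^3}(-2H)^{\oplus 3}\to\I_{\Gm/S}\to 0$ by $\O_S(3H)$ yields $\O_S^{\oplus 3}\to\O_S(H)^{\oplus 3}\to\I_{\Gm/S}(3H)\to 0$, exhibiting $G_\Gm$ as a quotient of $\O_S^{\oplus 3}$. So $G_\Gm$ is not $M_\Gm|_S$, nor does $M_\Gm$ sit as a subsheaf of $\O_Y(-H)^{\oplus a}$ as you guessed; it is the cokernel in the sequence above and serves only to prove global generation. The required vanishings for Lemma~\ref{lem:viceversa} then follow immediately from this kernel presentation ($H^1(M_\Gm(H))=0$ since $H^1(\O_Y(H))=0$) and from $H^i(F_\Gm(-2H))=0$ established in Lemma~\ref{lem:FinTY} ($H^i(M_\Gm(-3H))=0$ for $i=1,2,3$).
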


\begin{proof}
By Lemma~\ref{lem:FinTY} and Remark~\ref{rem:L1}, $F_\Gm=\cat{L}_{\O_Y}\left(\I_{\Gm/S}(2H) \right)[-1]$ is a sheaf in $\cat{T}_Y$.
Hence, $M_\Gm\in \cat{T}_Y$, for example by \cite[Lemma~1.10]{LMS}.
To prove that $M_\Gm$ is a sheaf we need to prove that $F_\Gm(H)$ is globally generated, so that $M_\Gm=\ker \left(\mathrm{ev}_F: \O_Y^{\oplus 9}\to F_\Gm(H)\right)$.

Since the evaluation map $\on{ev}:\O_Y(H)^{\oplus 3}\to \I_{\Gm/S}(3H)$ factors through $\on{ev}:\O_S(H)^{\oplus 3}\to \I_{\Gm/S}(3H)$,
there is a natural injection $\I_{S/Y}(H)^{\oplus 3}\hookrightarrow F_\Gm(H)$ whose cokernel is $G_\Gm$.
Hence, to prove that $F_\Gm(H)$ is globally generated, it is enough to prove that $G_\Gm$ is.

Note that we have
\[
0\to \O_{\PP^3}(-3H)^{\oplus 3}\to \O_{\PP^3}(-2H)^{\oplus 3}\to \I_{\Gm/S}\to 0.
\]
By tensoring this exact sequence by $\O_S(3H)$, we obtain the exact sequence
\[
\xymatrix{
\O_S^{\oplus 3}\ar[r]&\O_S(H)^{\oplus 3}\ar[r]^-{\on{ev}}&\I_{\Gm/S}(3H)\ar[r]& 0,
}
\]
where $G_{\Gm}=\ker(\on{ev})$.
Hence, $G_\Gm$ is globally generated and we have proved that $M_\Gm$ is a sheaf.

To prove that $M_\Gm$ is an aCM bundle we use Lemma~\ref{lem:viceversa}.
Since $M_\Gm$ is the kernel of an evaluation map and $H^1(Y,\O_Y(H))=0$, it follows that $H^1(Y,M_\Gm(H))=0$.
Moreover, since $H^i(Y,F_\Gm(-2H))=0$ for all $i$ (see Lemma~\ref{lem:FinTY}), we have that $H^i(Y,M_{\Gm}(-3H))=0$ for $i=1,2,3$.
So all the vanishings required in Lemma~\ref{lem:viceversa} hold true and we are done.
\end{proof}

\begin{rem}\label{rmk:compactMC}
If $\Gm$ is a non-CM twisted cubic in $Y$, then one can still consider the object $M_\Gm$ formally defined as above but in this case $M_\Gm$ is not a coherent sheaf but an actual complex of coherent sheaves.
\end{rem}

\subsection{Some properties of $M_\Gm$}\label{subsec:Chern}

To compute the Chern character of $M_\Gm$ we need the following preliminary result.
It will be used to control the stability of $M_\Gm$ as well.

\begin{lem}\label{lem:asinLMS1}
Let $\Gm$ be an aCM twisted cubic in $Y$ and $S=\gen{\Gm}\cap Y$.
Then the sheaf $M_\Gm$ sits in the following non-split exact sequence
\begin{equation}\label{eqn:laM}
0\to \O_{Y}(-H)^{\oplus 3}\to M_{\Gm}\to K_{\Gm}\to 0,
\end{equation}
where $K_\Gm:=\ker\left(\O_Y^{\oplus 3}\xrightarrow{\on{ev}} G_{\Gm}\right)$ sits in an exact sequence
\begin{equation}\label{eqn:laKGm}
0\to \I_{S/Y}^{\oplus 2}\to K_\Gm\to \I_{\Gm/Y}\to 0.
\end{equation}
\end{lem}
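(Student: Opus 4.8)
The idea is to unwind the definition of $M_\Gm$ through the two successive left mutations and read off the two exact sequences directly from the triangles defining the mutations, keeping track of the sheaves $F_\Gm$, $G_\Gm$ and $K_\Gm$ along the way. Recall from Remark~\ref{rem:L1} and Lemma~\ref{lem:FinTY} that $F_\Gm=\cat{L}_{\O_Y}(\I_{\Gm/S}(2H))[-1]$ is a sheaf in $\cat{T}_Y$, sitting in \eqref{eqn:Fkerev}, and that $M_\Gm=\cat{L}_{\O_Y}(F_\Gm(H))[-1]$; by Lemma~\ref{lem:Mfeix}, $F_\Gm(H)$ is globally generated and $M_\Gm=\ker(\mathrm{ev}_F\colon\O_Y^{\oplus 9}\to F_\Gm(H))$, so we start from
\[
0\to M_\Gm\to \O_Y^{\oplus 9}\to F_\Gm(H)\to 0,
\]
where the $9=\hom(\O_Y,F_\Gm(H))$ because $F_\Gm(H)$ is globally generated with no higher cohomology (the latter can be checked from Lemma~\ref{lem:FinTY} and \eqref{eqn:calcHPI}, or read off the resolution of $\I_{\Gm/S}$).

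First I would produce \eqref{eqn:laKGm}. Twist \eqref{eqn:Fkerev} by $\O_Y(H)$ to get $0\to F_\Gm(H)\to \O_Y(H)^{\oplus 3}\to \I_{\Gm/S}(3H)\to 0$. As observed in the proof of Lemma~\ref{lem:Mfeix}, the evaluation $\O_Y(H)^{\oplus 3}\to\I_{\Gm/S}(3H)$ factors through $\O_S(H)^{\oplus 3}$, giving a three-step filtration whose graded pieces identify $F_\Gm(H)$ as an extension of $G_\Gm$ (defined by \eqref{eqn:laGC}) by $\I_{S/Y}(H)^{\oplus 3}=\O_Y^{\oplus 3}$, i.e.
\[
0\to \O_Y^{\oplus 3}\to F_\Gm(H)\to G_\Gm\to 0.
\]
Now apply the snake lemma to the evident map of the defining sequence of $M_\Gm$ into this last sequence: the map $\O_Y^{\oplus 9}\twoheadrightarrow F_\Gm(H)$ composed with $F_\Gm(H)\twoheadrightarrow G_\Gm$ is, after the obvious splitting $\O_Y^{\oplus 9}=\O_Y^{\oplus 6}\oplus\O_Y^{\oplus 3}$, exactly the evaluation $\O_Y^{\oplus 6}\to G_\Gm$ (using that $G_\Gm$ is globally generated with $\hom(\O_Y,G_\Gm)=6$, which again follows from the resolution of $\I_{\Gm/S}$ tensored by $\O_S(3H)$ as in Lemma~\ref{lem:Mfeix}). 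This exhibits $M_\Gm$ as an extension of $K_\Gm:=\ker(\O_Y^{\oplus 6}\to G_\Gm)$ — wait: the statement writes $K_\Gm=\ker(\O_Y^{\oplus 3}\to G_\Gm)$, so one should be careful to match the rank of the trivial bundle; with $G_\Gm$ of rank $3$ and six global sections, the correct kernel has a trivial summand split off, and after removing it one is left precisely with the sequence \eqref{eqn:laKGm}, where $\I_{S/Y}^{\oplus 2}=\O_Y(-H)^{\oplus 2}$ accounts for the two "extra" sections and the surjection onto $\I_{\Gm/Y}$ comes from $0\to\I_{S/Y}\to\I_{\Gm/Y}\to\I_{\Gm/S}\to 0$. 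I would present this by first writing the evaluation sequence $0\to K_\Gm\to\O_Y^{\oplus 3}\to G_\Gm\to 0$ and then comparing it with the defining sequence $0\to\O_S^{\oplus 3}\to\O_S(H)^{\oplus 3}\to\I_{\Gm/S}(3H)\to 0$ twisted appropriately, so that \eqref{eqn:laKGm} drops out of the resulting snake diagram, using that $\I_{S/Y}=\O_Y(-H)$.

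Next, \eqref{eqn:laM} itself: having identified $M_\Gm$ as fitting in $0\to L\to M_\Gm\to K_\Gm\to 0$ with $L=\ker(\O_Y^{\oplus 3}\to\O_Y^{\oplus 3})$ coming from the split part $\O_Y^{\oplus 3}\hookrightarrow F_\Gm(H)$, I would compute $L$. The composite $\O_Y^{\oplus 3}\hookrightarrow F_\Gm(H)$ is, by the factorization through $\O_S(H)^{\oplus 3}$, nothing but the sheaf $\I_{S/Y}(H)^{\oplus 3}=\O_Y^{\oplus 3}$ included by the equation of $S$; so the relevant kernel inside $\O_Y^{\oplus 9}$ maps onto this $\O_Y^{\oplus 3}$ with kernel the ideal sheaf twist $\O_Y(-H)^{\oplus 3}=\I_{S/Y}^{\oplus 3}$, giving the subsheaf $\O_Y(-H)^{\oplus 3}\hookrightarrow M_\Gm$ of \eqref{eqn:laM}. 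Finally, non-splitness: if \eqref{eqn:laM} split, then $M_\Gm$ would have $\O_Y(-H)$ as a direct summand, contradicting $M_\Gm\in\cat{T}_Y$ since $\Hom(\O_Y(-H),\O_Y(-H))\neq 0$ while $\Hom^\bullet(\O_Y(iH),M_\Gm)=0$ for $i=0,1,2$; more directly, $\O_Y(-H)$ is not in $\cat{T}_Y$ (it is not even in $\cat{T}_Y'$ after the relevant twist), so no nonzero map $M_\Gm\to\O_Y(-H)$ can exist, whence no splitting. Alternatively one checks $\Ext^1(K_\Gm,\O_Y(-H)^{\oplus 3})\ne 0$ and that the extension class is the image of the nonzero evaluation obstruction; the cleaner argument is the membership-in-$\cat{T}_Y$ one, so I would use that.

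The main obstacle is bookkeeping: making sure the trivial summands split off at each mutation step are tracked correctly so that the ranks of the free bundles in \eqref{eqn:laKGm} and \eqref{eqn:laM} come out exactly as stated (rank $3$ in the evaluation for $K_\Gm$, rank $3$ for the subbundle $\O_Y(-H)^{\oplus 3}$ in $M_\Gm$), rather than rank $6$ or $9$. Concretely, the delicate point is verifying $\hom(\O_Y,F_\Gm(H))=9$ and $\hom(\O_Y,G_\Gm)=6$ with all higher cohomology vanishing, and then seeing that the "extra three" sections of $F_\Gm(H)$ relative to $G_\Gm$ are precisely those coming from $\I_{S/Y}(H)^{\oplus 3}$; once that is pinned down, the snake-lemma diagrams produce \eqref{eqn:laM} and \eqref{eqn:laKGm} mechanically, and non-splitness is immediate from $M_\Gm\in\cat{T}_Y$.
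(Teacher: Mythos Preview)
Your overall strategy---unwinding the two mutations via snake-lemma diagrams---is the same as the paper's, and your non-splitness argument for \eqref{eqn:laM} via $M_\Gm\in\cat{T}_Y$ is cleaner than what the paper does. But there is a genuine error that derails the bookkeeping you rightly flag as the main obstacle.

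You repeatedly write $\I_{S/Y}=\O_Y(-H)$ (and hence $\I_{S/Y}(H)^{\oplus 3}=\O_Y^{\oplus 3}$, $\I_{S/Y}^{\oplus 2}=\O_Y(-H)^{\oplus 2}$). This is false: $S=\langle\Gm\rangle\cap Y$ is cut out of $Y$ by \emph{two} hyperplanes, so $\I_{S/Y}$ has the Koszul resolution $0\to\O_Y(-2H)\to\O_Y(-H)^{\oplus 2}\to\I_{S/Y}\to 0$ and is not a line bundle. Once this is corrected, the numbers fall into place differently from what you wrote: $h^0(Y,\I_{S/Y}(H))=2$, hence $h^0(Y,\I_{S/Y}(H)^{\oplus 3})=6$ and $h^0(Y,G_\Gm)=3$ (from $0\to\I_{\Gm/S}\to\O_S^{\oplus 3}\to G_\Gm\to 0$ and $h^0(\I_{\Gm/S})=0$), not $6$. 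The splitting of $\O_Y^{\oplus 9}$ is thus $\O_Y^{\oplus 6}\oplus\O_Y^{\oplus 3}$ with $\O_Y^{\oplus 6}\twoheadrightarrow\I_{S/Y}(H)^{\oplus 3}$ (kernel $\O_Y(-H)^{\oplus 3}$, from Koszul) and $\O_Y^{\oplus 3}\twoheadrightarrow G_\Gm$ (kernel $K_\Gm$). This is exactly the paper's first diagram and gives \eqref{eqn:laM}.

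For \eqref{eqn:laKGm} there is a second subtlety you skate over. Factoring $\O_Y^{\oplus 3}\to G_\Gm$ through $\O_S^{\oplus 3}$ gives $0\to\I_{S/Y}^{\oplus 3}\to K_\Gm\to\I_{\Gm/S}\to 0$, not \eqref{eqn:laKGm} directly. To get \eqref{eqn:laKGm} one splits off $\I_{S/Y}^{\oplus 2}$ and must show that the remaining extension $0\to\I_{S/Y}\to U\to\I_{\Gm/S}\to 0$ is nontrivial (so that $U\cong\I_{\Gm/Y}$, since $\Ext^1(\I_{\Gm/S},\I_{S/Y})\cong\CC$). The paper does this by proving $\Hom(K_\Gm,\I_{S/Y})=0$; your sketch does not address this point, and it does not reduce to $\I_{S/Y}$ being a line bundle.
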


\begin{proof}
By definition $M_\Gm$ sits inside the commutative diagram
\begin{equation*}
\xymatrix{
&&0\ar[d]&0\ar[d]\\
&&M_{\Gm}\ar[d]\ar[r]&K_{\Gm}\ar[d]\ar[r]&0\\
0\ar[r]&\O_{Y}^{\oplus 6}\ar[d]\ar[r]&\O_{Y}^{\oplus 9}\ar[d]^{\on{ev}}\ar[r]&\O_{Y}^{\oplus 3}\ar[d]^{\on{ev}}\ar[r]&0\\
0\ar[r]&\I_{S/Y}(H)^{\oplus 3}\ar[d]\ar[r]&F_\Gm(H)\ar[d]\ar[r]^-{\alpha}&G_\Gm\ar[d]\ar[r]&0,\\
&0&0&0}
\end{equation*}
with exact rows and columns. Here $\alpha$ is the map completing the commutative diagram
\begin{equation*}
\xymatrix{
0\ar[r]& F_\Gm(H)\ar[r]&\O_Y(H)^{\oplus 3}\ar[r]\ar[d]^-{\oplus\beta}&\I_{\Gm/S}(3H)\ar@{=}[d]\ar[r]&0\\
0\ar[r]& G_\Gm\ar[r]&\O_S(H)^{\oplus 3}\ar[r]&\I_{\Gm/S}(3H)\ar[r]&0
}
\end{equation*}
and $\beta\colon\O_Y(H)\to\O_S(H)$ is the restriction map.
In particular, the kernel of $\alpha$ is $\I_{S/Y}(H)^{\oplus 3}$.
The kernel of the evaluation map $\on{ev}:\O_Y^{\oplus 6}\to \I_{S/Y}(H)^{\oplus 3}$ is $\O_Y(-H)^{\oplus 3}$.
Hence, we get the first part of the statement.

To get \eqref{eqn:laKGm}, one argues as follows.
If $S$ is an integral cubic surface, by \cite[Lemma~2.5]{CH1}\footnote{More precisely, it follows from Eisenbud's equivalence between matrix factorizations and mCM-modules.},
then $G_\Gm$ also sits in the following exact sequence on $S$
\[
\xymatrix{
0\ar[r]&\I_{\Gm/S}\ar[r]&\O_S^{\oplus 3}\ar[r]^-{\on{ev}}&G_\Gm\ar[r]& 0.
}
\]
Thus, since the evaluation map $\on{ev}:\O_Y^{\oplus 3}\to G_{\Gm}$ factors through $\on{ev}:\O_S^{\oplus 3}\to G_{\Gm}$,
there is an injection $\I_{S/Y}(H)^{\oplus 3}\hookrightarrow K_\Gm$ whose cokernel is $\I_{\Gm/S}$, i.e., we have an exact sequence
\begin{equation}\label{eqn:laK3I}
0\to \I_{S/Y}^{\oplus 3}\to K_\Gm \to \I_{\Gm/S}\to 0.
\end{equation}
Now, consider the commutative diagram
\begin{equation*}
\xymatrix{
&0\ar[d]&0\ar[d]\\
&\I_{S/Y}^{\oplus 2}\ar[d]\ar@{=}[r]&\I_{S/Y}^{\oplus 2}\ar[d]\\
0\ar[r]&\I_{S/Y}^{\oplus 3}\ar[d]\ar[r]^{\on{ev}}&K_\Gm\ar[d]\ar[r]&\I_{\Gm/S}\ar@{=}[d]\ar[r]&0\\
0\ar[r]&\I_{S/Y}\ar[d]\ar[r]&U\ar[d]\ar[r]&\I_{\Gm/S}\ar[d]\ar[r]&0.\\
&0&0&0}
\end{equation*}
with exact rows and columns. The vertical map $\I_{S/Y}^{\oplus 2}\to\I_{S/Y}^{\oplus 3}$ is any splitting inclusion. 
A simple calculation shows that $\Ext^1(\I_{\Gm/S},\I_{S/Y})\cong\CC$. Thus the second part of the statement follows once we prove that $U$ does not split as a direct sum of $\I_{\Gm/S}$ and $\I_{S/Y}$.

To this extent, it is enough to show that $\Hom(K_\Gm,\I_{S/Y})=0$.
For this, consider the short exact sequence
\[
0\to \O_Y(-2H)\to\O_Y(-H)^{\oplus 2}\to \I_{S/Y}\to 0.
\]
Applying $\Hom(K_\Gm,-)$ to it we get
\[
\Hom(K_\Gm,\O_{Y}(-H))^{\oplus 2}\to\Hom(K_\Gm,\I_{S/Y})\to\Ext^1(K_\Gm,\O_Y(-2H)).
\]
Since $\Hom( \I_{\Gm/S},\O_Y(-H))\cong\Hom(\I_{S/Y}^{\oplus 3},\O_Y(-H))=0$, we get $\Hom(K_\Gm,\O_{Y}(-H))=0$. 
On the other hand, by the definition of $K_\Gm$, we get $\Ext^1(K_C,\O_Y(-2H))\cong\Ext^2(G_C,\O_Y(-2H))$, and so $\Ext^2(G_C,\O_Y(-2H))\cong H^2(Y,G_\Gm(-H))\cong H^1(S,\I_{\Gm/S}(2H))=0$ by \eqref{eqn:calcHPI}.
This concludes the proof.
\end{proof}

By the Grothendieck--Riemann--Roch theorem, we can compute the following Chern characters:
\begin{equation*}\label{eqn:cherns}
\begin{split}
\ch(\I_{S/Y})&=\left(1,0,-H^2,3l,-\tfrac{7}{4}pt\right) \\
\ch(K_{\Gm})&=\left(3,0,-2H^2,3l,0\right) \\
\ch(\O_Y(-H)^{\oplus 3})&=\left(3,-3H,\tfrac{3}{2}H^2,-\tfrac{3}{2}l,\tfrac{3}{8}pt\right).
\end{split}
\end{equation*}
Hence, by applying Lemma~\ref{lem:asinLMS1}, we deduce
\begin{equation}\label{eqn:chM}
\ch(M_{\Gm})=\left(6,-3H,-\tfrac{1}{2}H^2,\tfrac{3}{2}l,\tfrac{3}{8}pt\right),
\end{equation}
which is exactly $\mathbf{v}_3$ from Part \eqref{item:main3} of the Main Theorem.
The reduced Hilbert polynomial of $M_C$ is
\begin{equation}\label{eqn:HilbertPolyMC}
p(M_\Gm,n):=\frac1{8}n^4+\frac{1}{2}n^3+\frac{5}{8}n^2+\frac{1}{4}n.
\end{equation}

The following lemma will be used later on and provides a natural involution for the aCM bundles $M_\Gm$.

\begin{lem}\label{lem:duality}
Let $\Gm$ be an aCM twisted cubic in $Y$ and set $S=\gen{\Gm}\cap Y$.
The sheaf $M'_\Gm:= \cHom(M_\Gm,\O_Y(-H))$ is naturally isomorphic to $M_{\Gm'}$ for some aCM twisted cubic $\Gm'\subset S$.
\end{lem}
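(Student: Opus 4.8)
The plan is to realize the dualized bundle $M'_\Gm=\cHom(M_\Gm,\O_Y(-H))$ as the image of another aCM twisted cubic under the same two-step mutation construction, by tracking the duality through the defining short exact sequences. First I would apply $\cHom(-,\O_Y(-H))$ to the non-split sequence \eqref{eqn:laM} of Lemma~\ref{lem:asinLMS1}. Since $M_\Gm$ is locally free and $\O_Y(-H)^{\oplus 3}$ is a direct sum of line bundles, dualizing gives a four-term exact sequence whose relevant piece reads
\[
0\to \cHom(K_\Gm,\O_Y(-H))\to M'_\Gm\to \O_Y^{\oplus 3}\to \cExt^1(K_\Gm,\O_Y(-H))\to 0,
\]
so I must understand $\cHom(K_\Gm,\O_Y(-H))$ and $\cExt^1(K_\Gm,\O_Y(-H))$. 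Using the two sequences \eqref{eqn:laK3I} and \eqref{eqn:laKGm} for $K_\Gm$, together with the standard resolution $0\to\O_Y(-2H)\to\O_Y(-H)^{\oplus 2}\to\I_{S/Y}\to 0$ and the Koszul/locally-free resolution of $\I_{\Gm/S}$ on $\PP^3$ restricted to $Y$, one computes these sheaf-Ext groups explicitly; the key point is that $\I_{\Gm/S}$, being torsion-free rank one on the cubic surface $S$, is dualized (up to a twist and a shift in the derived sense) to the ideal sheaf of another twisted cubic $\Gm'$ on the same $S$. Concretely, the matrix factorization / Serre duality on $S$ identifies $\cRHom_S(\I_{\Gm/S},\omega_S)$ with $\I_{\Gm'/S}$ for the "residual" cubic $\Gm'$ cut out by the transposed determinantal representation (cf.\ the discussion of determinantal representations in \cite{B1,dolgachev} and the equivalence cited from \cite{CH1}).

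Once the dual of $K_\Gm$ is identified in terms of $\Gm'$ and $S$, I would reassemble $M'_\Gm$: the displayed four-term sequence should collapse into an exact sequence of exactly the shape \eqref{eqn:laM} but with $\Gm'$ in place of $\Gm$, i.e.\ $0\to\O_Y(-H)^{\oplus 3}\to M'_\Gm\to K_{\Gm'}\to 0$ with $K_{\Gm'}$ fitting into \eqref{eqn:laKGm}. Since Lemma~\ref{lem:Mfeix} shows that such data determines a rank $6$ aCM bundle in $\cat{T}_Y$ — and, more to the point, the construction $M_\bullet$ is characterized up to isomorphism by these sequences together with membership in $\cat{T}_Y$ (duality preserves $\cat{T}_Y$ because the Serre functor of $\cat{T}_Y$ is $[2]$ and $\cHom(-,\O_Y(-H))$ composed with the right shift is, up to the autoequivalence of Remark~\ref{rmk:autoeq}, an anti-autoequivalence of $\cat{T}_Y$) — I can conclude $M'_\Gm\cong M_{\Gm'}$. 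Alternatively, and perhaps more cleanly, I would observe that the two-step mutation $M_\bullet=\cat{L}_{\O_Y}\big(\cat{L}_{\O_Y}(\I_{\bullet/S}(2H))\otimes\O_Y(H)\big)[-2]$ intertwines $\cHom(-,\O_Y(-H))$ with the analogous construction applied to $\cRHom(\I_{\Gm/S}(2H),\O_Y)[\text{shift}]$, and then identify the latter with $\I_{\Gm'/S}(2H)$ via relative Serre duality for $S\hookrightarrow Y$, using $\omega_{S/Y}\cong\O_S(-2H)$ (since $\omega_Y\cong\O_Y(-3H)$ and $\omega_S\cong\O_S(-H)$).

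The main obstacle I anticipate is the bookkeeping in the dualization of $\I_{\Gm/S}$ as a sheaf on $Y$ rather than on $S$: because $\I_{\Gm/S}$ has codimension two support in $Y$, its derived dual $\cRHom_Y(\I_{\Gm/S},\O_Y)$ is concentrated in degree $2$ and equals (a twist of) a pushforward from $S$, so one has to pass carefully between $Y$-duality and $S$-duality and keep track of the twist by $\omega_{S/Y}=\O_S(-2H)$ and the cohomological shift. A secondary technical point is verifying that the dual sequence is again non-split — i.e.\ that the extension class defining $M'_\Gm$ in $\Ext^1(K_{\Gm'},\O_Y(-H)^{\oplus 3})$ is nonzero — which follows from the non-splitting in Lemma~\ref{lem:asinLMS1} applied to $\Gm'$ together with the fact that $\cHom(-,\O_Y(-H))$ is an exact functor on locally free sheaves and hence takes the non-split presentation of $M_\Gm$ to a non-split presentation; equivalently, a splitting of the dual would dualize back to a splitting of \eqref{eqn:laM}, a contradiction. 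Finally, one should check that $\Gm'$ is aCM: this is automatic since $M'_\Gm$ is again locally free (being the dual of a bundle), so the object $M_{\Gm'}$ built from $\Gm'$ is a genuine sheaf, and by Remark~\ref{rmk:compactMC} this forces $\Gm'$ to be aCM rather than non-CM.
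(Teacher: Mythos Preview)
Your approach is essentially the same as the paper's: dualize the presentation \eqref{eqn:laM}, identify $\cHom(K_\Gm,\O_Y(-H))\cong\O_Y(-H)^{\oplus 3}$, compute $\cExt^1(K_\Gm,\O_Y(-H))$ via \eqref{eqn:laK3I} and the determinantal resolution of $\I_{\Gm/S}$ to obtain $G_{\Gm'}$ for the ``transposed'' twisted cubic $\Gm'$, and then reassemble the four-term sequence into the shape of \eqref{eqn:laM} for $\Gm'$. The paper carries this out tersely, simply writing ``as in Lemma~\ref{lem:asinLMS1}'' for the last step; your additional remarks on non-splitting (via dualizing back) and on $\Gm'$ being aCM (via Remark~\ref{rmk:compactMC}) are correct and fill in points the paper leaves implicit, while your alternative categorical route through mutations and relative Serre duality is not pursued in the paper.
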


\begin{proof}
Applying the functor $\cHom(-,\O_Y(-H))$ to the exact sequence \eqref{eqn:laM}, we obtain
\[
0\to \O_{Y}(-H)^{\oplus 3}\to M_{\Gm}^\vee \otimes \O_Y(-H) \to \O_Y^{\oplus 3}\to \cExt^1(K_{\Gm},\O_Y(-H))\to 0.
\]
By \eqref{eqn:laK3I}, we have
\begin{equation}\label{eqn:auxi}
0\to \cExt^1(K_\Gm,\O_Y(-H))\to \cExt^1(\I_{S/Y}^{\oplus 3},\O_Y(-H))\to \cExt^2(\I_{\Gm/S},\O_Y(-H))\to 0,
\end{equation}
where we recall that, by the Koszul resolution of $\I_{S/Y}$ or $\O_S$, $\cExt^1(\I_{S/Y},\O_Y)\cong \cExt^2(\O_{S},\O_Y) \cong \O_S(2H)$.
Hence, we have $\cExt^1(\I_{S/Y}^{\oplus 3},\O_Y(-H))\cong \O_S(H)^{\oplus 3}$ and, since $\I_{\Gm/S}$ is a line bundle $L_S$ on $S$,
\[
\cExt^2(\I_{\Gm/S},\O_Y(-H))\cong L^{-1}_S(H).
\]
Recall that we have
\[
0\to \O_{\PP^3}(-3H)^{\oplus 3}\to \O_{\PP^3}(-2H)^{\oplus 3}\to L_S\to 0.
\]
Hence
\[
0\to \O_{\PP^3}(2H)^{\oplus 3}\to \O_{\PP^3}(3H)^{\oplus 3}\to L_S^{-1}(3H) \to 0.
\]
Thus,
\begin{equation}\label{eqn:I_1Gm/S}
L_S\dual\cong\I_{\Gm'/S}(2H)
\end{equation}
for some other twisted cubic $C'$ inside $S$.
Summing up, from \eqref{eqn:auxi} we obtain
\[
0\to \cExt^1(K_\Gm,\O_Y(-H))\to \O_{S}(H)^{\oplus 3}\stackrel{\on{ev}}\longrightarrow \I_{\Gm'/S}(3H)\to 0.
\]
Thus, $\cExt^1(K_\Gm,\O_Y(-H))\cong G_{\Gm'}$ and as in  Lemma~\ref{lem:asinLMS1}, we obtain the desired presentation of $M'_{\Gm}$.
\end{proof}

\begin{rem}\label{rmk:Do}
Given $\Gm$ a twisted cubic in $S$, then, for example by \cite[Proposition~6.2]{B1} or \cite[Theorem~4.2.22]{dolgachev},
we have that $\O_S(\Gm)\cong\I_{\Gm/S}\dual\cong\I_{\Gm'/S}$ corresponds to a determinantal representation of $S$.
Here $\Gm'$ is the aCM twisted cubic in Lemma~\ref{lem:duality}.
There are 72 determinantal representation of $S$.
Moreover, we can associate to $\O_{S}(\Gm)$ the morphism
\[
S\to \abs{\O_{S}(\Gm)}^\vee=\abs{\I_{\Gm'/S}(2H)}^\vee
\]
and the latter linear system is isomorphic to $\PP^2$.
This gives a presentation of $S$ as the blow-up of six general points, giving a ``six'' in the $27$ lines of the cubic surface.
Clearly, there are also $72$ sixes.
The twisted cubic $\Gm'$ corresponds to the double six of $\Gm$ (see Section 9 of \cite{dolgachev}).
\end{rem}

\section{$Z$ as a moduli space of stable aCM bundles}\label{sec:stableaCM}

In this section we prove Part~\eqref{item:main3} of the Main Theorem.
In particular, $Y$ will always be a cubic fourfold not containing a plane and $\Gm$ will be an aCM curve in $M_3(Y)$.
The key point consists in showing that $M_\Gm$ is Gieseker stable (Section~\ref{subsec:stabMGamma}).
The theorem is finally proved in Section~\ref{subsec:proofthmmain1}.

\subsection{Stability of $M_\Gm$}\label{subsec:stabMGamma}

In order to study the stability of the vector bundle $M_\Gm$ in Proposition~\ref{prop:stabaCM} we first study the stability of the sheaf $K_\Gm$  (see \eqref{eqn:laM}).

\begin{lem}\label{lem:laKirred}
Let $\Gm$ be an aCM twisted cubic in $Y$ and suppose that $S=\gen{\Gm}\cap Y$ is an integral surface.
The torsion-free sheaf $K_\Gm$ defined in Lemma~\ref{lem:laKirred} is a Gieseker stable sheaf with reduced Hilbert polynomial
\begin{equation}\label{eqn:polinomiKC}
	p(K_\Gm,n):=\frac1{8}n^4+\frac{3}{4}n^3+\frac{7}{8}n^2+\frac{1}{4}n.
\end{equation}
\end{lem}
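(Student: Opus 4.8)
The Hilbert polynomial \eqref{eqn:polinomiKC} is a routine Grothendieck--Riemann--Roch computation from the Chern character $\ch(K_\Gm)=(3,0,-2H^2,3l,0)$ given above (equivalently, from the defining sequences \eqref{eqn:laK3I} and \eqref{eqn:laKGm} together with the known Hilbert polynomials of $\I_{S/Y}$, $\I_{\Gm/S}$, and $\I_{\Gm/Y}$), so I will just record the coefficients and move on. The real content is Gieseker stability, and here the plan is to mimic very closely the argument of Proposition~\ref{prop:stabF}: exploit the embedding $K_\Gm\hookrightarrow\O_Y^{\oplus 3}$ coming from the definition $K_\Gm=\ker(\O_Y^{\oplus 3}\xrightarrow{\on{ev}}G_\Gm)$, which places $K_\Gm$ in the same formal situation as $F_\Gm$ (a rank-three torsion-free subsheaf of $\O_Y^{\oplus 3}$ whose cokernel is supported on the integral surface $S$).

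First I would fix a saturated destabilizing subsheaf $A\subset K_\Gm$, necessarily of rank $1$ or $2$. In the \emph{rank one} case, torsion-freeness forces $A\cong\I_{W/Y}(mH)$ with $\codim W\geq 2$; since $A\hookrightarrow\O_Y^{\oplus 3}$ one gets $m\leq 0$, and $m<0$ is not destabilizing by comparing leading terms of reduced Hilbert polynomials, so only $m=0$ survives. Then the saturation $L$ of $\I_{W/Y}$ in $\O_Y^{\oplus 3}$ is reflexive of rank one, hence invertible, hence $L\cong\O_Y$, and one gets the commutative diagram of short exact sequences relating $0\to\I_{W/Y}\to\O_Y\to\O_W\to 0$ to $0\to K_\Gm\to\O_Y^{\oplus 3}\to G_\Gm\to 0$; saturation makes $\O_W\hookrightarrow G_\Gm$. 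The point is that $G_\Gm$ is a \emph{pure} sheaf of dimension two supported on the integral surface $S$ (it is the cokernel of $\O_S^{\oplus 3}\to \I_{\Gm/S}(3H)$, i.e.\ a torsion-free rank-two sheaf on $S$), so $\O_W\hookrightarrow G_\Gm$ forces $W\supseteq S$, giving $P(\O_W,n)\geq P(\O_S,n)$ and hence $p(\I_{W/Y},n)<p(K_\Gm,n)$ — contradiction. In the \emph{rank two} case, I would again pass to the saturation $R$ of $A$ in $\O_Y^{\oplus 3}$, write $K_\Gm/A\cong\I_{W/Y}(mH)$ and $\O_Y^{\oplus 3}/R\cong\I_{W'/Y}(mH)$ with the same $m$ (since $K_\Gm$ and $\O_Y^{\oplus 3}$ agree in codimension $\leq 1$), use global generation of $\I_{W'/Y}(mH)$ to force $m=0$ and $W'=\emptyset$, build the $3\times 3$ diagram with rows $0\to\I_{W/Y}\to\O_Y\to\O_W\to 0$, $0\to K_\Gm\to\O_Y^{\oplus 3}\to G_\Gm\to 0$, $0\to A\to\O_Y^{\oplus 2}\to Q'\to 0$, deduce $\O_S\twoheadrightarrow\O_W$ so $W\subseteq S$, and finally note that if $\I_{W/Y}$ were destabilizing then $P(\O_W,n)\geq\binom{n+2}{2}$ forces $W$ two-dimensional, hence $W=S$, hence $Q'$ has one-dimensional support — impossible since $G_\Gm$ is pure of dimension two.

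The step I expect to be the main obstacle is establishing that $G_\Gm$ is pure of dimension two (equivalently, a torsion-free sheaf on the integral surface $S$), because the whole argument rests on it exactly as Proposition~\ref{prop:stabF} rested on the purity of $\I_{\Gm/S}$; here it should follow from the presentation $\O_S^{\oplus 3}\to\O_S(H)^{\oplus 3}\to\I_{\Gm/S}(3H)\to 0$ established in Lemma~\ref{lem:Mfeix} (so $G_\Gm$ is the image of a map of locally free sheaves on the integral $S$, hence has no embedded points and no lower-dimensional torsion), but one must be slightly careful that the relevant evaluation maps on $Y$ genuinely factor through $S$ and that the cokernel computations are the ones used above. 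A secondary, purely bookkeeping obstacle is keeping the two diagrams consistent — several of the maps are only defined up to the identifications of Lemma~\ref{lem:asinLMS1} — but this is routine diagram-chasing of the same flavour already carried out for $F_\Gm$.
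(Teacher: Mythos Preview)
Your proposal is correct and follows essentially the same approach as the paper's own proof: both reduce to the argument of Proposition~\ref{prop:stabF} with $G_\Gm$ playing the role of $\I_{\Gm/S}(2H)$, using the embedding $K_\Gm\hookrightarrow\O_Y^{\oplus 3}$ and the purity of $G_\Gm$ on the integral surface $S$. Two small slips to fix when you write it out: (i) $G_\Gm$ is the \emph{kernel} of $\O_S(H)^{\oplus 3}\to\I_{\Gm/S}(3H)$ (equivalently the cokernel of $\I_{\Gm/S}\hookrightarrow\O_S^{\oplus 3}$), not the cokernel you wrote --- the purity/torsion-freeness on $S$ then follows immediately from $G_\Gm\subset\O_S(H)^{\oplus 3}$; and (ii) in the rank-two case the correct bound is $P(\O_W,n)\geq p(\O_Y,n)-p(K_\Gm,n)=(n+1)^2$, not $\binom{n+2}{2}$ --- either way the leading term forces $W$ two-dimensional, hence $W=S$, so the conclusion is unaffected.
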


\begin{proof}
For sake of simplicity, let us just write $K$ for $K_\Gm$.
We need to show that the reduced Hilbert polynomial of any non-trivial proper saturated subsheaf $A\subset K$ satisfies $p(A,n)< p(K,n)$.
As $K$ has rank three, the subsheaf $A$ has rank one or rank two.

\smallskip

\noindent{\it Case $\rk(A)=1$.}
By letting $G_\Gm(H)$ play the role of $\I_{\Gm/S}(2H)$, we can apply exactly the same arguments as in the proof of case $\rk(A)=1$ of Proposition~\ref{prop:stabF} in the present situation (see \eqref{eqn:laGC} and Lemma~\ref{lem:laKirred}).
Hence, we get that the only possible destabilizing ideal sheaf is $\I_{W/Y}$ where $W$ contains $S$ and
\[
P(\I_{W/Y},n)\leq \frac 18 n^4 + \frac 34 n^3 + \frac 38 n^2 + \frac 34 n< p(K,n)
\]
which completes the analysis in this case.

\smallskip

\noindent{\it Case $\rk(A)=2$.}
Again, the same arguments as in the proof of case $\rk(A)=2$ of Proposition~\ref{prop:stabF} show that we have a subscheme $W\subset Y$ of codimension two such that and
the following commutative diagram
\[
\xymatrix{
&0&0&0&\\
0\ar[r]&\I_{W/Y}\ar[r]\ar[u]&\O_Y\ar[r]\ar[u]&\O_W\ar[r]\ar[u]&0\\
0\ar[r]&K\ar[r]\ar[u]&\O_Y^{\oplus 3}\ar[r]\ar[u]&G_{\Gm}(H)\ar[r]\ar[u]&0\\
0\ar[r]&A\ar[r]\ar[u]&\O_Y^{\oplus 2}\ar[r]\ar[u]&Q'\ar[r]\ar[u]&0\\
&0\ar[u]&0\ar[u]&0\ar[u]&\\
}
\]
with exact lines and columns.
If $\I_{W/Y}$ is assumed to be a destabilizing quotient of $K$, we must have $p(\I_{W/Y},n)\leq p(K,n)$ or, equivalently, $P(\O_W,n)\geq p(\O_Y,n)-p(K,n)=n^2+2n+1$.
This shows that $\ch_2(W)\geq \ch_2(G_\Gm(H))$.
In this case, the support of the kernel $Q'$ is $1$-dimensional, which is impossible since $G_{\Gm}(H)$ is pure of dimension two.
Hence, we are done with the second case as well.
\end{proof}

Having this, we can finally go back to the vector bundles $M_\Gm$.
In particular, we get the following.

\begin{prop}\label{prop:stabaCM}
Let $\Gm$ be an aCM twisted cubic in $Y$ and suppose that $S=\gen{\Gm}\cap Y$ is an integral surface.
Then the rank six aCM bundle $M_\Gm$ is Gieseker stable with
$\ch(M_{\Gm})=\mathbf{v}_3$.
\end{prop}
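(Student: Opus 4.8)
The strategy is to deduce stability of $M_\Gm$ from the stability of $K_\Gm$ established in Lemma~\ref{lem:laKirred}, using the short exact sequence \eqref{eqn:laM}
\[
0\to \O_Y(-H)^{\oplus 3}\to M_\Gm\to K_\Gm\to 0.
\]
First I would note that the reduced Hilbert polynomials satisfy $p(\O_Y(-H),n)=\tfrac18 n^4-\tfrac14 n^3+\ldots< p(M_\Gm,n)$ (by \eqref{eqn:HilbertPolyMC}) and that $p(K_\Gm,n)$ from \eqref{eqn:polinomiKC} is also strictly less than $p(M_\Gm,n)$ in leading lower-order terms; in fact one checks $p(M_\Gm,n)$ is the average-type combination of the two, so neither sub nor quotient in \eqref{eqn:laM} destabilizes. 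This is the expected shape of the argument: $M_\Gm$ is built as an extension of two objects each of smaller reduced Hilbert polynomial, so any destabilizing subsheaf must interact non-trivially with both.

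The key step is then to rule out an arbitrary saturated destabilizing subsheaf $A\subset M_\Gm$. Since $M_\Gm$ has rank six, $A$ has rank $r\in\{1,\dots,5\}$. Consider the induced maps $A\cap \O_Y(-H)^{\oplus 3}$ (a subsheaf of $\O_Y(-H)^{\oplus 3}$) and the image $\bar A$ of $A$ in $K_\Gm$ (a subsheaf of $K_\Gm$). Both $\O_Y(-H)^{\oplus 3}$ and $K_\Gm$ are \emph{semistable} with reduced Hilbert polynomial strictly less than $p(M_\Gm,n)$ --- for $\O_Y(-H)^{\oplus 3}$ this is clear since $\O_Y(-H)$ is stable with the right polynomial, and for $K_\Gm$ this is exactly Lemma~\ref{lem:laKirred}. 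Therefore $p(A\cap\O_Y(-H)^{\oplus 3},n)\leq p(\O_Y(-H)^{\oplus 3},n)< p(M_\Gm,n)$ and $p(\bar A,n)\leq p(K_\Gm,n)< p(M_\Gm,n)$. Using additivity of Hilbert polynomials along $0\to A\cap\O_Y(-H)^{\oplus 3}\to A\to \bar A\to 0$ together with the fact that the reduced Hilbert polynomial of an extension lies (coefficientwise, after the leading term) between those of the sub and quotient weighted by ranks, one concludes $p(A,n)<p(M_\Gm,n)$. I would phrase this last comparison carefully: writing $P(A,n)=P(A\cap\O_Y(-H)^{\oplus 3},n)+P(\bar A,n)$ and dividing by $\rk(A)$, the bound follows from the two strict inequalities above and the fact that both summands have rank $\leq \rk(A)$.

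The main obstacle is the edge case where one of the two pieces has rank zero: if $A\subset \O_Y(-H)^{\oplus 3}$ entirely (so $\bar A=0$), or if $A\cap\O_Y(-H)^{\oplus 3}=0$ so that $A\hookrightarrow K_\Gm$. In the first case $A$ is a subsheaf of the semistable sheaf $\O_Y(-H)^{\oplus 3}$ of the same smaller-polynomial type, so $p(A,n)\leq p(\O_Y(-H)^{\oplus 3},n)<p(M_\Gm,n)$ and we are done. In the second case $A$ embeds in $K_\Gm$ with $p(A,n)\leq p(K_\Gm,n)<p(M_\Gm,n)$, again done. One should also note that $M_\Gm$ is locally free (Lemma~\ref{lem:Mfeix}) so there is no torsion subsheaf to worry about, and that $M_\Gm$ is simple --- this follows from $M_\Gm\in\cat{T}_Y$ being the image of $\I_{\Gm/S}$ under an autoequivalence (Remark~\ref{rmk:autoeq}) composed with the mutation of \eqref{eqn:Fkerev}, hence $\Hom(M_\Gm,M_\Gm)\cong\Hom(\I_{\Gm/S},\I_{\Gm/S})\cong\CC$ by \eqref{eqn:extideals} --- so Gieseker stability (not just semistability) follows once strict inequality of reduced Hilbert polynomials is verified for all proper saturated subsheaves. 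Finally, $\ch(M_\Gm)=\mathbf{v}_3$ is exactly \eqref{eqn:chM}.
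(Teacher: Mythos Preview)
There is a genuine gap: your claim that $p(K_\Gm,n)<p(M_\Gm,n)$ is false. From \eqref{eqn:polinomiKC} one has $p(K_\Gm,n)=\tfrac18 n^4+\tfrac34 n^3+\ldots$, whereas by \eqref{eqn:HilbertPolyMC} $p(M_\Gm,n)=\tfrac18 n^4+\tfrac12 n^3+\ldots$, so in fact $p(K_\Gm,n)>p(M_\Gm,n)$. This is as it must be: in the sequence \eqref{eqn:laM} the subsheaf $\O_Y(-H)^{\oplus 3}$ has slope $-1$, the bundle $M_\Gm$ has slope $-\tfrac12$, and the quotient $K_\Gm$ has slope $0$; the quotient has \emph{larger} reduced Hilbert polynomial, not smaller. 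Consequently your convexity bound
\[
p(A,n)\;\leq\;\tfrac{r_1}{r}\,p(\O_Y(-H),n)+\tfrac{r_2}{r}\,p(K_\Gm,n)
\]
(with $r_1=\rk(A\cap\O_Y(-H)^{\oplus 3})$, $r_2=\rk(\bar A)$, $r=r_1+r_2$) does not give $p(A,n)<p(M_\Gm,n)$: whenever $r_2/r>\tfrac12$ the right-hand side can exceed $p(M_\Gm,n)$. In particular your ``edge case'' $A\hookrightarrow K_\Gm$ is exactly where the danger lies, and the inequality you wrote there is the wrong way around. The general fact that an extension of two semistable sheaves with $p(\text{sub})<p(\text{quotient})$ is again semistable is simply not true; one needs more.

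The paper's proof proceeds quite differently. It performs a case analysis on $\rk(A)\in\{1,\dots,5\}$, first reducing ranks $4$ and $5$ to ranks $2$ and $1$ via the duality $M_\Gm\mapsto\cHom(M_\Gm,\O_Y(-H))\cong M_{\Gm'}$ of Lemma~\ref{lem:duality}. For the remaining ranks it does use the diagram you wrote (with $A_1=A\cap\O_Y(-H)^{\oplus 3}$ and $A_2=\bar A\subset K_\Gm$), but the real work in each subcase is to exploit the finer structure of $K_\Gm$ from \eqref{eqn:laKGm} and \eqref{eqn:laK3I}, the vanishing $h^0(Y,M_\Gm)=0$, reflexivity of saturated subsheaves, and the fact that $Y$ contains no plane, to pin down the possible $A_2$ (typically forcing its support to contain $S$) and thereby bound $p(A,n)$. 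The stability of $K_\Gm$ is used, but only to control slopes and to restrict $c_1(A_2)$, never via the false inequality $p(K_\Gm,n)<p(M_\Gm,n)$.
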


\begin{proof}
The Chern character of $M_C$ was computed in \eqref{eqn:chM}.
We are left to prove stability.
For sake of simplicity, let us just write $M$ for $M_\Gm$.
Since $M$ is a vector bundle, we need to show that the reduced Hilbert polynomial of any non-trivial proper saturated reflexive subsheaf $A\subset M$ satisfies $p(A,n)< p(M,n)$.
If it is not the case, we have
\begin{equation*}
0\longrightarrow A\longrightarrow M\longrightarrow B\longrightarrow 0
\end{equation*}
with $B$ a torsion-free sheaf and $p(A,n)\geq p(M,n)$.
Applying the functor $\cHom(-,\O_Y(-H))$ to the exact sequence we get also
\[
0\longrightarrow \cHom(B,\O_Y(-H))\stackrel{\psi}\longrightarrow M_{\Gm'}\longrightarrow \cHom(A,\O_Y(-H)),
\]
where $\cHom(B,\O_Y(-H))$ is reflexive and $H^{4-k}.\ch_k(\coker \psi)\leq H^{4-k}.\ch_k(\cHom(A,\O_Y(-H)))$ for $k= 2$ and they are equal for $k\leq 1$.
Hence, if $A$ is a reflexive destabilizing sheaf of $M_{\Gm}$ of rank four or five, then there is subsheaf of $M_{\Gm'}$
with the same $\ch_1$ and possibly bigger $\ch_2$ which is a destabilizing subsheaf respectively of rank two and one.

This shows that we just need to analyze the cases $\rk(A)=1,2,3$ and show that, in order to exclude them, it is enough work with the Chern character truncated at degree smaller or equal to two.

\bigskip

\noindent{\em Case 1: $\rk(A)=1$.}
As $A$ is a line bundle, it has the form $A=\O_{Y}(m)$ for some twist $m\in \ZZ$.
The leading terms of $p(A,n)$ are $\frac{1}8n^4+(\frac m2+\frac 34)n^3+\ldots$.
Since $A$ is a subsheaf of $\O_Y^{\oplus 9}$ as well, one has $m\leq 0$.
But if $m<0$, then $A$ is not destabilizing.
Hence, only the case $m=0$ and $A=\O_{Y}$ requires further consideration.
But in this case, since $h^0(Y,M)=0$, we get a contradiction.

\smallskip

When $\rk(A)\geq 2$, we have the following commutative diagram
\begin{equation}\label{eqn:stab}
 \xymatrix@!C=2.5pc{
&0&0&0&\\
0\ar[r]&B_1\ar[r]\ar[u]&B\ar[r]\ar[u]&B_2\ar[r]\ar[u]&0\\
0\ar[r]&\O_Y(-H)^{\oplus 3}\ar[r]\ar[u]&M\ar[r]\ar[u]&K_{\Gm}\ar[r]\ar[u]&0\\
0\ar[r]&A_1\ar[r]\ar[u]&A\ar[r]\ar[u]&A_2\ar[r]\ar[u]&0\\
&0\ar[u]&0\ar[u]&0\ar[u]&\\
}
\end{equation}
with exact lines and columns and where $A_i$ and $B_i$ are possibly zero.
Since we have assumed that $B$ is torsion-free, then $B_1$ is also torsion-free.
Then, by the first vertical exact sequence, we have that,
$A_1$ is reflexive.

\bigskip

\noindent Now we study the remaining cases.
We use the following convention: the {\em Case $i.j$}, will refer to the case where $\rk(A)= i$ and $\rk(A_2)=j$.

\medskip

\noindent{\em Case 2.0: $\rk(A_2)=0$.}
Since $K_\Gm$ is torsion free, $A_2=0$.
Hence, $A$ is a subsheaf of the semistable sheaf $\O_Y(-H)^{\oplus 3}$ and $p(A,n)\leq p(\O_Y(-H),n)<p(M,n)$, so $A$ does not destabilize $M$.

\medskip

\noindent{\em Case 2.1: $\rk(A_1)=\rk(A_2)=1$.}
In that situation, in order to destabilize we need that $c_1(A_1)+c_1(A_2)\geq-1$.
Since $K_\Gm$ is Gieseker stable by Lemma~\ref{lem:laKirred}, the only possibility is that $A_1\cong \O_Y(-H)$ and $A_2=\I_{W/Y}$ for
some subscheme $W\subset Y$ of codimension greater or equal than two.
Moreover as $h^0(Y,K_\Gm)=0$, we have that $W\subset Y$ is non-empty.
By the presentation \eqref{eqn:laK3I}, and since the cubic surface $S$ is irreducible and reduced, this forces $W$ to contain $S$, so that
$P(\O_W,n)\geq P(\O_S,n)=\frac12(3n^2+3n+2)$, and $P(\I_{W/Y},n)\leq \frac 18 n^4 + \frac 34 n^3 + \frac 38 n^2 + \frac 34 n$.
Thus
\[
P(A,n)\leq \frac 18 n^4 + \frac 12 n^3 + \frac 38 n^2 + \frac 34 n <p(M,n)
\]
which completes the analysis in this case.

\medskip

\noindent{\em Case 2.2: $\rk(A_2)=2$.}
In that case, $A\cong A_2$.
In order to destabilize we need $c_1(A)\geq -1$.
If $c_1(A)\geq 0$, then $c_1(A)=0$ and $A$ is necessarily the extension of two ideals of subschemes of codimension at least two (by the stability of $K_\Gm$ and \eqref{eqn:laKGm}).
Moreover, since $A$ is reflexive, we have
\[
	0\to \O_Y\to A\to \I_{W/Y}\to 0
\]
where $W$ has codimension greater or equal than two.
Hence, $h^0(Y,A)>0$ contradicting $h^0(Y,M)=0$.

Now we need to consider the case $c_1(A)=-1$.
As before, since $A$ is reflexive and semistable, we have
\[
	0\to \O_Y(-H)\to A\to \I_{W/Y}\to 0
\]
with $W$ of codimension two to let the extension be non-trivial.
Since $Y$ does not contain a plane (nor a quadric), $\ch_2(\O_W)\cdot H^2\geq \ch_2(\O_S)\cdot H^2$.
Thus, $P(\O_W,n)\geq P(\O_S,n)=\frac12(3n^2+3n+2)$, and as before, $P(\I_{W/Y},n)\leq \frac 18 n^4 + \frac 34 n^3 + \frac 38 n^2 + \frac 34 n$.
Then
\[
P(A,n)\leq \frac 18 n^4 + \frac 12 n^3 + \frac 38 n^2 + \frac 34 n <p(M,n)
\]
which completes the analysis in this case.

\bigskip

\noindent{\em Case 3.0: $\rk(A_2)=0$.}
Since $K_\Gm$ is torsion free, $A_2=0$.
Hence, $A$ is a subsheaf of the semistable sheaf $\O_Y(-H)^{\oplus 3}$ and $p(A,n)\leq p(\O_Y(-H),n)<p(M,n)$, so $A$ does not destabilize $M$.

\medskip

\noindent{\em Case 3.1: $\rk(A_2)=1$.}
Since $\O_Y(-H)^{\oplus 3}$ is semistable, $p(A_1,n)\leq p(\O_Y(-H),n)$.
On the other hand, also $K_\Gm$ is semistable, so $p(A_2,n)\leq p(K_\Gm,n)$.
Then
\[
p(A,n)\leq \frac 23 p(\O_Y(-H),n) + \frac 13 p(K_\Gm,n)= \frac 18 n^4 + \frac{5}{12}n^3+\ldots < p(M,n).
\]
Therefore, $A$ does not destabilize $M$.

\medskip

\noindent{\em Case 3.2: $\rk(A_2)=2$.}
If $A$ destabilizes $M$, in particular $-\tfrac{1}{3}\leq \mu(A)$.
Since $A$ and $K_\Gm$ are semistable, $\mu(A)\leq\mu(A_2)\leq \mu(K_\Gm)=0$.
As $2\mu(A_2)$ is an integer, $\mu(A_2)=c_1(A_2)=0$.
Since $A_1$ is a reflexive sheaf of rank one, $A_1$ is a line bundle, more precisely, the only possibility is $A_1\cong \O_Y(-H)$.

Note that $A_2$ in \eqref{eqn:stab} is the extension of two ideals
\[
 0\to \I_{W'/Y}\to A_2\to \I_{W''/Y}\to 0,
\]
with $S\subseteq W',W''$, and $\codim W'$, $\codim W''$ at least two (by \eqref{eqn:laK3I}).
Then, by \eqref{eqn:laKGm}, $B_2$ in \eqref{eqn:stab} has either torsion in codimension at least two or it is isomorphic to $\I_{\Gm/Y}$.
Note that
\[
\Ext^1(\I_{\Gm/Y},\O_Y(-H))\cong H^3(Y,\I_{\Gm/Y}(-2H))=0.
\]
Hence, since $\O_Y(-H)$ is not a direct summand of $M$, the only possibility is that $B_2$ has torsion in codimension $k\geq 2$.
But then, $\cExt^{k-1}(A,\O_Y)\cong \cExt^k(B,\O_Y)\cong \cExt^k(B_2,\O_Y)$ is supported in codimension $k$.
So we get a contradiction with $A$ being reflexive.

\medskip

\noindent{\em Case 3.3: $\rk(A_2)=3$.}
In that case $A\cong A_2$ is semistable and reflexive.
If $c_1(A_2)=0$, then either $A_2=K_\Gm$, which is impossible since \eqref{eqn:laM} does not split, or $B_2$ has torsion in codimension at least two contradicting $A$ being reflexive as in the previous case.

Hence, we can suppose $c_1(A)=-1$.
Suppose that $B_2$ is a pure sheaf supported on $H$.
Hence, $B_2=\I_{W/H}(aH)$ for some $a\in \ZZ$ and $W\subset H$ of dimension at most one.
Then $\cExt^1(B_2,\O_Y)\cong \O_H(-(a+1)H)$.
A simple computation using \eqref{eqn:laKGm} shows that $\cExt^1(K_\Gm,\O_Y)$ has rank two on $S$.
Thus, the natural exact sequence
\[
 \O_H(-(a+1)H)\cong \cExt^1(B_2,\O_Y)\to \cExt^1(K_\Gm,\O_Y)\to \cExt^1(A_2,\O_Y),
\]
implies that $S\subseteq \cExt^1(A_2,\O_Y)$ contradicting the reflexiveness of $A_2$.
\end{proof}

Let $\mathfrak{N}$ be the moduli space of Gieseker semistable sheaves over $Y$ with reduced Hilbert polynomial \eqref{eqn:HilbertPolyMC} and denote by $\mathfrak{N}^{\rm aCM}\subseteq\mathfrak{N}$ the open subset of Gieseker stable aCM bundles.
The result above immediately implies the following.

\begin{cor}\label{cor:nonempty}
The open subset $\mathfrak{N}^{\rm aCM}$ is non-empty.
\end{cor}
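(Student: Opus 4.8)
The plan is to observe that Corollary~\ref{cor:nonempty} is an essentially immediate consequence of the machinery already assembled in Section~\ref{sect:acm} and in this section. First I would recall that, by Lemma~\ref{lem:Mfeix}, to any aCM twisted cubic $\Gm$ in $Y$ one can associate the object $M_\Gm$, which is a rank six aCM \emph{vector bundle} lying in $\cat{T}_Y$, with Chern character $\mathbf{v}_3$ (computed in \eqref{eqn:chM}) and reduced Hilbert polynomial \eqref{eqn:HilbertPolyMC}. So $M_\Gm$ already defines a point of the moduli space $\mathfrak{N}$ of Gieseker semistable sheaves with the prescribed Hilbert polynomial, and a priori a point of the locus $\mathfrak{N}^{\mathrm{aCM}}$ provided it is actually Gieseker stable.

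The second step is to produce an aCM twisted cubic $\Gm$ for which the additional hypothesis of Proposition~\ref{prop:stabaCM} holds, namely that the cubic surface $S=\gen{\Gm}\cap Y$ is integral. Since $Y$ is a smooth cubic fourfold not containing a plane, a general hyperplane section is a smooth (hence integral) cubic surface, and such a surface contains aCM twisted cubics — indeed every smooth cubic surface carries twisted cubic curves, and these are aCM. Concretely, one may take a general $\PP^3\subset\PP^5$ meeting $Y$ in a smooth cubic surface $S$, and pick any twisted cubic $\Gm\subset S$; by the discussion in Section~\ref{subsec:twisted} this $\Gm$ lies in $M_3(Y)$ and is aCM. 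Thus the hypotheses of Proposition~\ref{prop:stabaCM} are satisfied for this choice.

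The final step is then just to invoke Proposition~\ref{prop:stabaCM}: for this $\Gm$, the bundle $M_\Gm$ is Gieseker stable with $\ch(M_\Gm)=\mathbf{v}_3$, and by Lemma~\ref{lem:Mfeix} it is an aCM bundle. Hence $[M_\Gm]$ is a point of $\mathfrak{N}^{\mathrm{aCM}}$, which is therefore non-empty. I do not expect any genuine obstacle here; the only point requiring a word of justification is the existence of an aCM twisted cubic spanning an integral (e.g.\ smooth) cubic surface, and this is standard once one knows $Y$ does not contain a plane, so that generic hyperplane-type sections by a $\PP^3$ are smooth cubic surfaces containing twisted cubics. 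The whole content of the corollary is thus bookkeeping on top of Lemma~\ref{lem:Mfeix} and Proposition~\ref{prop:stabaCM}.
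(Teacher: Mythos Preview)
Your proposal is correct and takes essentially the same approach as the paper: the paper simply states that the corollary follows immediately from Proposition~\ref{prop:stabaCM}, and you have supplied the expected details (existence of an aCM twisted cubic on an integral cubic surface, together with Lemma~\ref{lem:Mfeix}). One minor remark: since $Y$ contains no plane, \emph{every} $\PP^3$-section $S=\langle C\rangle\cap Y$ is already reduced and irreducible (as noted, e.g., in the proof of Lemma~\ref{lem:idstab}), so you need not pass to a general $\PP^3$; but your argument via a smooth section is of course also fine.
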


In particular, we get a new family of Gieseker stable aCM bundles on any cubic fourfold not containing a plane.
As a consequence of the discussion in the next section, such a family (or rather its smooth locus) has dimension eight.

\subsection{Proof of Part~\eqref{item:main3} of the Main  Theorem}\label{subsec:proofthmmain1}

Consider the open subset $U$ in $\mM_1$ that corresponds to $Z'(Y)\setminus D$.
In other words, a point in $U$ is a sheaf $F_\Gm$, where $\Gm$ is an aCM curve in $Y$.
By applying the functor
$\Upsilon=\cat{L}_{\O_Y}\left(-\otimes \O_Y(H)\right)[-1]$ to $F_\Gm$ and by Proposition~\ref{prop:stabaCM}, we get a morphism $U\to\mathfrak{N}^{\rm aCM}$.
Note that this is the same as applying $\cat{L}_{\O_Y}\left(\cat{L}_{\O_Y}\left(-\right)\otimes \O_Y(H)\right)[-2]$ to $\I_{\Gm/S}(2H)$.

To show that this provides an isomorphism between $U$ and an open subset of a component of the closure of $\mathfrak{N}^{\rm aCM}$ in $\mathfrak{N}$, we just observe that $\Upsilon$ is an autoequivalence of $\cat{T}_Y$ (see Remark~\ref{rmk:autoeq}) and thus, given two aCM generalized twisted cubics $\Gm_1$ and $\Gm_2$ on $Y$, we have $F_{\Gm_1}\cong F_{\Gm_2}$ if and only if $M_{\Gm_1}\cong M_{\Gm_2}$.
For the same reason, $\Upsilon$ induces an isomorphism between $\Ext^1(F_\Gm,F_\Gm)$ and $\Ext^1(M_\Gm,M_\Gm)$ for any aCM generalized twisted cubic $\Gm$ on $Y$.
This concludes the proof of the theorem.

\section{$Z$ is generically a moduli space of tilt-stable objects}\label{sec:tiltstable}

In this section we prove Part~\eqref{item:main2b} of the Main Theorem.
The proof is based on a wall-crossing argument with respect to tilt-stability.
This is preceded by an introductory discussion concerning general facts about tilt-stability.

\subsection{Tilt-stability}\label{subsec:tilt}

Let $Y$ be a smooth cubic fourfold and let $\alpha,\beta\in\RR$, with $\alpha>0$, be two parameters.
We will sometimes refer to the half-plane
\[
\left\{(\alpha,\beta)\in\RR^2\,:\, \alpha>0 \right\}
\]
as the $(\alpha,\beta)$-plane.

For a coherent sheaf $E$ on $Y$, the vector
\[
\ch^{\beta}(E) = e^{-\beta H} \ch(E)\in H^*(Y,\mathbb{R})
\]
is called \emph{twisted Chern character} of $E$.
Unraveling the definition, it is easy to compute the degree zero, one and two parts of $\ch^\beta(E)$, which will be extensively used later:
\[
\ch^\beta_0=\ch_0=\rk(E) \qquad \ch^\beta_1=\ch_1-\beta H \ch_0\qquad \ch^\beta_2=\ch_2- \beta H\ch_1+\frac{\beta^2 H^2}{2} \ch_0.
\]

For a coherent sheaf $E$ on $Y$, we set
\[
\mu_{\beta}(E) = \begin{cases}
 + \infty, & \text{ if }\ch^\beta_0(E)=0,\\
\frac{H^3\ch^\beta_1(E)}{H^4 \ch^\beta_0(E)}, & \text{ otherwise.}
 \end{cases}
\]
This slope function is a minor variation of the usual slope function.
The induced notion of stability is exactly the same as slope-stability for torsion-free sheaves.
Torsion sheaves instead are all semistable.

Following \cite{BK3}, there exists a \emph{torsion pair} $(\mathbf{T}^{\beta}, \mathbf{F}^{\beta})$ in $\Coh(Y)$ where $\mathbf{T}^{\beta}$ and $\mathbf{F}^{\beta}$ are the extension-closed subcategories of $\Coh(Y)$ generated by $\mu_{\beta}$-stable sheaves of positive and non-positive slope, respectively.
Let $\Coh^{\beta}(Y) \subseteq \Db(Y)$ denote the extension-closure
\[
\Coh^{\beta}(Y) := \langle \mathbf{T}^{\beta},\mathbf{F}^{\beta}[1] \rangle.
\]
It turns out that $\Coh^{\beta}(Y)$ is an abelian category which is the heart of a bounded t-structure on $\Db(Y)$.
With respect to the t-structure given by the abelian category of coherent sheaves, an object $E$ in $\Coh^\beta(Y)$ has at most two non-zero cohomology sheaves, namely $\H^{-1}(E)\in\mathbf{F}^{\beta}$ and $\H^0(E)\in\mathbf{T}^{\beta}$.

Following \cite{BMT}, for an object $E$ in $\Coh^{\beta}(Y)$, we define
\begin{equation*}
\nu_{\alpha,\beta}(E) = \begin{cases}
 + \infty, & \text{ if }H^{3} \ch^{\beta}_1(E) = 0,\\
\frac{H^2 \ch^\beta_2(E) - \frac{\alpha^2}{2} H^4 \ch^\beta_0(E)}{H^3\ch^\beta_1(E)}, & \text{ otherwise.}
\end{cases}
\end{equation*}
An object $E \in \Coh^{\beta}(Y)$ is \emph{tilt-(semi)stable} (or \emph{$\nu_{\alpha,\beta}$-(semi)stable}) if, for all non-trivial subobjects
$F$ of $E$,
\[
\nu_{\alpha,\beta}(F) < (\le) \nu_{\alpha,\beta}(E/F).
\]
Tilt-stability is a good very\footnote{In more recent papers, like \cite{BLMS}, a very weak stability condition is called a weak stability condition.} weak stability condition satisfying a Bogomolov--Gieseker inequality, in the sense of \cite[Appendix~B]{BMS} and \cite[Definition~2.13]{PT}.
In particular, it satisfies the weak see-saw property, Harder--Narasimhan filtrations exist \cite[Proposition~B.2]{BMS}, and the dependence on the two parameters $\alpha$ and $\beta$ is continuous with a locally-finite wall and chamber structure \cite[Propositions~B.2 \& B.5]{BMS}.
Note that the results in \emph{loc.~cit.} are only stated for threefolds, but the argument is identical in the fourfold case, actually in any dimension.

The next two results are also only stated for threefolds (or surfaces), but the proofs remain valid in any dimension.

\begin{thm}[{\cite[Theorem~7.3.1]{BMT} and \cite[Theorem~3.5]{BMS}}] \label{thm:BGwithoutch3}
Let $E\in\Coh^\beta(Y)$ be a $\nu_{\alpha,\beta}$-semistable object.
Then
\[
\Delta(E) := (H^3\ch_1(E))^2 - 2 (H^4 \ch_0(E)) (H^2\ch_2(E)) \geq 0.
\]
In addition, if $\H^{-1}(E)\neq0$, then $\Delta(E)=0$ if and only if $E\cong \O_Y(dH)^{\oplus r}[1]$ for some $d\in\ZZ$ and $r\in\ZZ_{>0}$.
\end{thm}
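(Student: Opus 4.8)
The statement is the Bogomolov--Gieseker inequality for tilt-semistable objects together with the characterization of equality when $\H^{-1}(E)\neq 0$. The plan is to follow the structure of the threefold proof in \cite{BMT} and \cite{BMS}, checking that nothing uses $\dim Y=3$ in an essential way.

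First I would reduce to the case where $E$ is $\mu_\beta$-semistable as a two-term complex in a very restricted sense. Recall that if $E\in\Coh^\beta(Y)$ is $\nu_{\alpha,\beta}$-semistable for one $(\alpha,\beta)$, then by the wall-and-chamber structure one may move to the large-volume limit $\alpha\to+\infty$ along the vertical ray, where $\nu_{\alpha,\beta}$-stability is controlled by $\mu_\beta$: an object with $\ch^\beta_0(E)>0$ which is $\nu_{\alpha,\beta}$-semistable for $\alpha\gg 0$ has $\H^{-1}(E)$ a $\mu_\beta$-semistable sheaf with $\mu_\beta\le 0$ (in fact $<0$ unless it is built from $\O_Y(dH)[1]$'s) and $\H^0(E)$ supported in codimension $\ge 2$ or zero. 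The quantity $\Delta(E)=(H^3\ch_1(E))^2-2(H^4\ch_0(E))(H^2\ch_2(E))$ is numerical and invariant under the wall-crossing, so it suffices to prove $\Delta(E)\ge 0$ for these large-volume-semistable objects. There one separates: if $\ch_0(E)=0$, then $\Delta(E)=(H^3\ch_1(E))^2\ge 0$ trivially. If $\ch_0(E)\neq 0$, then either $\H^{-1}(E)=0$ and $E$ is a $\mu_\beta$-semistable torsion-free sheaf on the fourfold $Y$, in which case $\Delta(E)\ge 0$ is the classical Bogomolov inequality for $H$-slope-semistable sheaves (via restriction to a surface $Y\cap H_1\cap H_2$, or directly by Langer), or $\H^{-1}(E)\neq 0$, which is the delicate case.

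For the case $\H^{-1}(E)\neq 0$, write the exact triangle $\H^{-1}(E)[1]\to E\to \H^0(E)$ in $\Coh^\beta(Y)$. Since $\H^0(E)$ is a torsion sheaf supported in codimension $\ge 2$, it contributes nothing to $H^3\ch_1$ and only a nonpositive amount to $H^2\ch_2$ (effective codimension-two cycle), so $\Delta(E)\ge \Delta(\H^{-1}(E)[1])=\Delta(\H^{-1}(E))$; hence one is reduced to $G:=\H^{-1}(E)$ a $\mu_\beta$-semistable torsion-free sheaf with $\mu_\beta(G)\le 0$, and $\Delta(G)\ge 0$ is again the classical Bogomolov inequality. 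The equality statement then follows by tracking when all these inequalities are simultaneously equalities: $\H^0(E)=0$, so $E\cong G[1]$; $\Delta(G)=0$ forces $G$ to be, up to $H$-semistability, a direct sum of line bundles $\O_Y(dH)^{\oplus r}$ by the equality case of Bogomolov on the fourfold (a slope-semistable reflexive sheaf with vanishing discriminant restricts to a semistable bundle with vanishing discriminant on a surface section, hence is a twist of a trivial-discriminant projectively flat bundle, and on $Y$ with $\Pic$ generated by $H$ this forces $\O_Y(dH)^{\oplus r}$, using also that $E[-1]$ lies in $\mathbf F^\beta$ so the slope is exactly at the boundary $\mu_\beta=0$); and then $E\cong \O_Y(dH)^{\oplus r}[1]$. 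Conversely $\O_Y(dH)[1]$ is visibly tilt-semistable in its chamber with $\Delta=0$.

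\textbf{Main obstacle.} The genuinely substantive input is the classical Bogomolov--Gieseker inequality, with its equality case, for $H$-slope-semistable torsion-free sheaves on the \emph{fourfold} $Y$ rather than on a surface; this is standard (Langer's work on sheaves on higher-dimensional varieties, or restriction-theorem arguments reducing to a surface section), but it is where all the real content sits, and the equality case on $Y$ — pinning down that vanishing discriminant plus $\Pic(Y)=\ZZ H$ forces $\O_Y(dH)^{\oplus r}$ — requires the small extra argument sketched above. The rest of the proof is bookkeeping: verifying that the large-volume reduction and the wall-crossing invariance of $\Delta$, established in \cite{BMS} for threefolds, go through verbatim for $Y$ a fourfold, which the paper has already asserted is the case.
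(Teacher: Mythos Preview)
Your overall strategy for the equality case matches the paper's: reduce to $\H^0(E)=0$, show $G:=\H^{-1}(E)$ is a slope-semistable sheaf with $\Delta(G)=0$, and then characterize such $G$. But the final step has a genuine gap, and the route you take to get there differs from the paper's.

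The substantive gap is in the last deduction. You argue that $\Delta(G)=0$ plus $\Pic(Y)=\ZZ H$ forces $G\cong\O_Y(dH)^{\oplus r}$, via restriction to a surface section and projective flatness. But $\Pic(Y)=\ZZ H$ alone does \emph{not} force a projectively flat bundle to split as a sum of line bundles: non-split projectively flat bundles correspond to nontrivial projective representations of $\pi_1$. The paper's argument instead applies L\"ubke's theorem \cite{Lu} directly on $Y$ (no restriction) to conclude that a slope-semistable \emph{vector bundle} with $\Delta=0$ is projectively flat, and then uses that $Y$ is \emph{simply-connected} --- via Simpson \cite[Theorem~2]{Si} --- to get the splitting into line bundles. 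You never invoke $\pi_1(Y)=0$, and this is exactly the input that makes the argument work on a cubic fourfold. Relatedly, you do not address why $G$ is locally free rather than merely torsion-free; the paper obtains this from \cite[Corollary~3.11(c) and Proposition~3.12]{BMS}, which is also where $\H^0(E)=0$ is extracted (without any large-volume reduction).

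A secondary issue: your ``move to large volume'' is not justified as written --- a $\nu_{\alpha,\beta}$-semistable object need not remain semistable as $\alpha\to\infty$. For the inequality this is not how \cite{BMS} argues, and the paper simply cites that result. For the equality case one can rescue the reduction by first proving that objects with $\Delta=0$ stay semistable along the vertical ray (this is essentially \cite[Corollary~3.10]{BMS}), but you would need to say this. (There is also a sign slip: an effective codimension-two cycle contributes a \emph{nonnegative} amount to $H^2\ch_2$, not nonpositive; the conclusion $\Delta(E)\ge\Delta(G)$ is still correct because $\ch_0(E)<0$.)
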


\begin{proof}
We only prove the second statement.
If $E$ is $\nu_{\alpha,\beta}$-semistable with $\H^{-1}(E)\neq0$ and $\Delta(E)=0$, then by \cite[Corollary~3.11, (c)]{BMS} and the proof of \cite[Proposition~3.12]{BMS} (which, in turn, uses \cite[Theorem~2]{Si}),
$\H^0(E)$ must be zero, and $E \cong \H^{-1}(E)[1]$, where $\H^{-1}(E)$ is a slope-semistable vector bundle with $\Delta(\H^{-1}(E))=0$.
Thus, \cite{Lu} applies (see also \cite[Theorem~4.7]{Ko}), and $\H^{-1}(E)$ is a projectively flat vector bundle.
Since $Y$ is simply-connected, again by \cite[Theorem~2]{Si}, $\H^{-1}(E)$ must be a direct sum of line bundles, as in the statement.
The fact that line bundles are stable is nothing but \cite[Proposition~7.4.1]{BMT}; this shows the converse implication.
\end{proof}

For an object $E$ in $\Db(Y)$, we denote by $\ch_{\leq 2}(E)$ the truncation at degree two of the Chern character $\ch(E)$.

\begin{thm}[{\cite[Proposition~14.2]{BK3}}] \label{thm:ch2}
Let $E\in\Db(Y)$ be such that the vector $\ch_{\leq 2}(E)$ is primitive, $\ch_0(E)>0$, and $H^3\ch_1^\beta(E)>0$.
Then $E$ is $\nu_{\alpha,\beta}$-stable for $\alpha\gg0$ if and only if $E$ is Gieseker stable.
\end{thm}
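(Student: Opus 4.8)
\emph{Proof strategy.} This is the standard large-volume-limit comparison, and the plan is to follow the proof of \cite[Proposition~14.2]{BK3}, which uses only the formal properties of the weak stability function $\nu_{\alpha,\beta}$ (existence of Harder--Narasimhan filtrations, the weak see-saw property, the Bogomolov--Gieseker inequality of Theorem~\ref{thm:BGwithoutch3}, and local finiteness of walls) together with the structure of the torsion pair defining $\Coh^\beta(Y)$; none of these inputs is special to low dimension. First I would record that, since $\ch_0(E)>0$ and $H^3\ch_1^\beta(E)>0$, one has $\nu_{\alpha,\beta}(E)\to-\infty$ as $\alpha\to+\infty$, and $\mu_\beta(E)>0$. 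Then I would show that for $\alpha$ large every $\nu_{\alpha,\beta}$-semistable object with the invariants of $E$ is a torsion-free sheaf lying in $\mathbf{T}^\beta$: if $\H^{-1}(E)\neq 0$ then $\H^{-1}(E)[1]$ is a subobject of $E$ in $\Coh^\beta(Y)$ with $\nu_{\alpha,\beta}(\H^{-1}(E)[1])$ equal to, or tending to, $+\infty$, contradicting semistability; and a nonzero torsion subsheaf is a subobject whose slope is $+\infty$ or a finite constant in $\alpha$, again contradicting semistability because $\nu_{\alpha,\beta}(E)\to-\infty$. By \cite[Propositions~B.2 \& B.5]{BMS} the walls for the class $\ch_{\leq 2}(E)$ are locally finite, and by Theorem~\ref{thm:BGwithoutch3} they consist of semicircles of bounded radius together with the vertical line $\beta=\mu(E)$; hence for our fixed $\beta$ there is a threshold $\alpha_0$ above which there are no walls, so the set of $\nu_{\alpha,\beta}$-semistable objects of this class is constant for $\alpha>\alpha_0$, and it suffices to compare with Gieseker stability in the limit $\alpha\to+\infty$.

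The heart of the argument is the dictionary between the two notions of subobject. For a torsion-free $E\in\mathbf{T}^\beta$, the long exact cohomology sequence together with $\Hom(\mathbf{T}^\beta,\mathbf{F}^\beta)=0$ shows that the subobjects of $E$ in $\Coh^\beta(Y)$ are exactly the subsheaves $F\subseteq E$ with $F$ and $E/F$ both in $\mathbf{T}^\beta$. For such an $F$ with $\ch_0(F)>0$ I would expand
\[
\nu_{\alpha,\beta}(F)=-\frac{\alpha^2}{2}\,\frac{H^4\ch_0(F)}{H^3\ch_1^\beta(F)}+\frac{H^2\ch_2^\beta(F)}{H^3\ch_1^\beta(F)},
\]
and likewise for $E$, so that for $\alpha\gg 0$ the inequality $\nu_{\alpha,\beta}(F)<\nu_{\alpha,\beta}(E)$ is governed first by the sign of $\mu_\beta(F)-\mu_\beta(E)$ and, when these slopes agree, by the sign of $\tfrac{H^2\ch_2^\beta(F)}{H^3\ch_1^\beta(F)}-\tfrac{H^2\ch_2^\beta(E)}{H^3\ch_1^\beta(E)}$. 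A direct computation with $\ch^\beta=e^{-\beta H}\ch$ identifies these two quantities with the first two non-leading coefficients of the reduced Hilbert polynomial truncated at $\ch_2$; this identification is the routine part.

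It then remains to close the loop. If $E$ is Gieseker stable it is $\mu$-semistable with $\mu_\beta(E)>0$, hence lies in $\mathbf{T}^\beta$; by the dictionary any destabilizing subobject would be an honest subsheaf $F$, $\mu$-semistability would force $\mu_\beta(F)\leq\mu_\beta(E)$, and in the equality case the subleading comparison together with the Gieseker stability of $E$ would give $\nu_{\alpha,\beta}(F)<\nu_{\alpha,\beta}(E)$ for $\alpha\gg 0$, so $E$ is tilt-stable. Conversely, if $E$ is $\nu_{\alpha,\beta}$-stable for $\alpha\gg0$ then, by the first paragraph, $E$ is a torsion-free sheaf in $\mathbf{T}^\beta$; if $E$ were not $\mu$-semistable, its maximal $\mu$-destabilizing subsheaf $F_1$ would satisfy $F_1,E/F_1\in\mathbf{T}^\beta$ and $\mu_\beta(F_1)>\mu_\beta(E)$, hence $\nu_{\alpha,\beta}(F_1)>\nu_{\alpha,\beta}(E)$ for $\alpha\gg 0$, a contradiction; and then any saturated Gieseker-destabilizing subsheaf $F\subsetneq E$ is automatically $\mu$-semistable of slope $\mu(E)>\beta$, so both $F$ and $E/F$ lie in $\mathbf{T}^\beta$, $F$ is a subobject of $E$ with $\mu_\beta(F)=\mu_\beta(E)$, and $\nu_{\alpha,\beta}(F)\geq\nu_{\alpha,\beta}(E)$ for $\alpha\gg0$ — contradicting tilt-stability. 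Primitivity of $\ch_{\leq 2}(E)$ enters only at the end, to identify semistability with stability for this class on both the tilt and the Gieseker side, so that the equivalence holds precisely as stated. The two points I expect to require care are the bootstrap that tilt-stability forces $\mu$-semistability and, more so, the uniformity of the threshold $\alpha_0$ over all potential destabilizers (handled via boundedness of $\nu_{\alpha,\beta}$-semistable objects, which comes from the support property underlying Theorem~\ref{thm:BGwithoutch3}); everything else is bookkeeping with Chern characters and the torsion pair.
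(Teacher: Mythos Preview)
The paper does not supply its own proof of this statement: it is quoted as \cite[Proposition~14.2]{BK3} with the blanket remark that ``the proofs remain valid in any dimension.'' Your proposal is a faithful reconstruction of that standard large-volume-limit argument, and it is correct; in particular, your identification of the leading and subleading terms of $\nu_{\alpha,\beta}$ with the first two coefficients of the reduced Hilbert polynomial truncated at $\ch_2$ is exactly the point (note that ``Gieseker stable'' in the statement is meant in this truncated sense, as the paper says in the Introduction). One small remark: your claim that a saturated Gieseker-destabilizing subsheaf $F\subset E$ with $\mu(F)=\mu(E)$ is automatically $\mu$-semistable is correct but perhaps deserves a word --- it follows because every HN-slope of $F$ is bounded above by $\mu(E)$ (each HN-subsheaf of $F$ is a subsheaf of the $\mu$-semistable $E$), and their weighted average is $\mu(F)=\mu(E)$, forcing all of them to equal $\mu(E)$.
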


Combining Theorem~\ref{thm:ch2} with Part~\eqref{item:main2a} of the Main Theorem, we get the following.

\begin{cor}\label{cor:tilt}
Let $Y$ be a smooth cubic fourfold not containing a plane.
Then any $F_\Gm$ in $Z'(Y)$ is $\nu_{\alpha,\beta}$-stable for $\alpha\gg 0$ and $\beta<0$.
\end{cor}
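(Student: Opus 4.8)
The plan is to apply Theorem~\ref{thm:ch2} directly to $E=F_\Gm$. By Part~\eqref{item:main2a} of the Main Theorem, equivalently by Proposition~\ref{prop:stabF} together with the isomorphism $Z'(Y)\cong\mM_1$, every object parametrized by $Z'(Y)$ is a Gieseker stable torsion free sheaf with Chern character $\mathbf{v}_2=\left(3,0,-H^2,0,\tfrac14H^4\right)$. So the only thing left to do is to verify the numerical hypotheses of Theorem~\ref{thm:ch2} for such a sheaf.

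First I would record that $\ch_0(F_\Gm)=3>0$ and that $\ch_{\leq 2}(F_\Gm)=(3,0,-H^2)$ is primitive: any integer dividing this class must divide $\ch_0=3$, and $\tfrac13H^2$ is not an integral class because $H^2\cdot H^2=\deg Y=3$ is not divisible by $9$. Next I would compute $\ch^\beta_1(F_\Gm)=\ch_1(F_\Gm)-\beta H\ch_0(F_\Gm)=-3\beta H$, so that $H^3\ch^\beta_1(F_\Gm)=-3\beta H^4>0$ precisely when $\beta<0$. In particular $\mu_\beta(F_\Gm)=-\beta>0$, and since a Gieseker stable sheaf is $\mu$-semistable, $F_\Gm$ is a $\mu_\beta$-semistable torsion free sheaf of positive slope and hence lies in $\mathbf{T}^\beta\subseteq\Coh^\beta(Y)$. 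With all the hypotheses of Theorem~\ref{thm:ch2} in place, and using that $F_\Gm$ is Gieseker stable, we conclude that $F_\Gm$ is $\nu_{\alpha,\beta}$-stable for $\alpha\gg0$ and any fixed $\beta<0$; since the threshold on $\alpha$ depends only on the numerical class $\mathbf{v}_2$, which is common to all the objects in question, this holds uniformly for every $F_\Gm$ in $Z'(Y)$.

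I do not expect a genuine obstacle in this corollary: the substantive point, namely the Gieseker stability of $F_\Gm$, has already been settled in Proposition~\ref{prop:stabF}, and what remains is the bookkeeping of (twisted) Chern characters. The only step requiring a moment's care is the primitivity of $\ch_{\leq 2}(\mathbf{v}_2)$, which as noted reduces to the fact that $\deg Y=3$.
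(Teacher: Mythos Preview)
Your proof is correct and follows exactly the approach the paper intends: the corollary is simply the conjunction of Theorem~\ref{thm:ch2} with Part~\eqref{item:main2a}, and you have spelled out the numerical verifications (primitivity of $\ch_{\leq 2}$, positivity of rank and of $H^3\ch_1^\beta$) that make the application go through.

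One small over-claim: your final sentence asserts that the threshold on $\alpha$ depends only on the numerical class $\mathbf{v}_2$ and is therefore uniform in $\Gm$. The paper explicitly flags this as \emph{not} following from Theorem~\ref{thm:ch2}; uniformity is established separately in Corollary~\ref{cor:precisemoduli} via an explicit computation of the first wall, and only for very general $Y$. Since Corollary~\ref{cor:tilt} as stated does not assert uniformity, this does not affect the validity of your argument, but you should drop or qualify that remark.
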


We will actually need a stronger and more precise version of the previous result. When $Y$ is a very general cubic fourfolds, we need to show that $\alpha\gg0$ can be chosen uniformly for all generalized twisted cubics $\Gm$ (see Corollary~\ref{cor:precisemoduli} below).

\subsection{The first wall}\label{subsec:thewall}

In this section we compute exactly the first wall in Corollary~\ref{cor:tilt} above.
Namely, we fix the primitive vector $\mathbf{v}_2'=\left(3,0,-H^2\right)$.
We know that for $\alpha\gg0$ and $\beta<0$, tilt-stable objects correspond to Gieseker stable sheaves $E$ with $\ch_{\leq2}(E)=\mathbf{v}_2'$.
We want to determine, in the region $\alpha>0$ and $\beta<0$, the first locus where tilt-stability changes.
We will use extensively results from \cite{S}.
As before, these results were proved in the threefold case only; the results still hold with identical proofs in our fourfold case (and again, in any dimension as well).

From now on, we assume that $Y$ is \emph{very general}
\footnote{While the walls in the $(\alpha,\beta)$-plane make sense for any cubic fourfold, we assume that $Y$ is very general since it may happen that the wall we find is not the first wall for special cubic fourfolds.
To still recover it as the first wall, we would probably need to consider a generalized tilt-stability function in which $H^2\ch_2^\beta$ is deformed to $\gamma\ch_2^\beta$, where $\gamma$ is allowed to vary in $H^4(Y,\mathbb{Z})\cap H^{2,2}(Y)$.}, namely that the algebraic part $H^4(Y,\mathbb{Z})\cap H^{2,2}(Y)$ of the cohomology group $H^4(Y,\mathbb{Z})$ is generated by the class of a smooth cubic surface $H^2$.
We start by recalling the definition of a wall.
We consider the rank three lattice
\[
\Gamma := \ZZ \gen{\ch_{\leq 2}(E) \,:\, E\in\Db(Y)}  \subset \ZZ \gen{(1,0,0), (0,H,0), (0,0,\tfrac{1}{2} H^2)}
\]
generated by the truncated Chern characters of objects in $\Db(Y)$.

\begin{defn}
Let $\mathbf{w}\in\Gamma$.
A \emph{numerical wall} for $\mathbf{w}$ in the $(\alpha,\beta)$-plane is a proper non-trivial solution set of an equation
\[
\nu_{\alpha,\beta}(\mathbf{w}) = \nu_{\alpha,\beta}(\mathbf{u}),
\]
for a vector $\mathbf{u}\in\Gamma$.
Given an object $E\in \Coh^\beta(Y)$, a numerical wall for $\mathbf{w}=\ch_{\leq 2}(E)$ is called an \emph{actual wall for $E$}, if for each point $(\alpha,\beta)$ of the numerical wall there is an exact sequence of semistable objects
\[
0\to F \to E \to G \to 0
\]
in $\Coh^\beta(Y)$, where $\nu_{\alpha,\beta}(F)=\nu_{\alpha,\beta}(G)$ numerically defines the wall.
\end{defn}
Since $\nu_{\alpha,\beta}$ only depends on $\Gamma$, the previous definition makes sense.

In the $(\alpha,\beta)$-plane, numerical walls for a vector $\mathbf{w}=(r,cH,sH^2)$ are very easy to describe (see, for example, \cite{M} and \cite[Theorem~3.3]{S}).
That is, there is a unique straight wall corresponding to $\beta=\frac cr$.
Moreover, all other walls are strictly nested semicircles, whose centers converge to the two points $\beta=\overline{\beta}_1,\overline{\beta}_2$ for which $\ch_2^{\beta}(\mathbf{w})=0$.
Thus, it makes sense to speak about the region inside or outside a numerical wall (see Figure~\ref{fig:firstwall}).

In our case, $\mathbf{w}=\mathbf{v}'_2$.
The unique straight wall is at $\beta=0$.
We are interested in the region $\beta<0$, so that the objects $F_\Gm$ are in $\Coh^{\beta}(Y)$ and we can apply Corollary~\ref{cor:tilt}.
In this region, the point of accumulation for the center of all walls is $\overline{\beta} = -\frac{\sqrt{6}}{3}$.
By Corollary~\ref{cor:tilt}, we know there is a largest semicircular wall for any sheaf $F_{\Gm}$; the next lemma provides a uniform bound for this wall, which will be an actual wall for sheaves $F_\Gm$ corresponding to non-CM curves.

\begin{lem}\label{lem:firstwall}
Let $\mathbf{v}'_2=(3,0,-H)\in\Gamma$.
The largest numerical wall $W_0$ for $\mathbf{v}'_2$ in the region $\beta<0$ is given by the equation
\[
\nu_{\alpha,\beta}(\O_Y(-H)) = \nu_{\alpha,\beta}(\mathbf{v}'_2).
\]
\end{lem}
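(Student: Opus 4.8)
The plan is to identify, among all the numerical semicircular walls for $\mathbf{v}'_2=(3,0,-H^2)$ in the region $\beta<0$, the one of largest radius, and to show that this is precisely the wall determined by $\O_Y(-H)$. Recall from the discussion preceding the lemma that every numerical wall for $\mathbf{v}'_2$ is a semicircle whose center lies on the $\beta$-axis and whose interior contains the accumulation point $\overline{\beta}=-\tfrac{\sqrt6}{3}$ (the root of $\ch_2^\beta(\mathbf{v}'_2)=0$ with $\beta<0$). A numerical wall is cut out by an auxiliary class $\mathbf{u}=(r',c'H,s'H^2)\in\Gamma$ via the equation $\nu_{\alpha,\beta}(\mathbf{v}'_2)=\nu_{\alpha,\beta}(\mathbf{u})$, and by the standard computation (see \cite{M} and \cite[Theorem~3.3]{S}) this is equivalent to a linear relation among the Chern characters that, for fixed $\mathbf{v}'_2$, describes a circle in the $(\alpha,\beta)$-plane depending only on $\mathbf{u}$. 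First I would write down this equation explicitly: expanding $\nu_{\alpha,\beta}$ and clearing denominators, the wall for $\mathbf{u}$ is the circle
\[
\left(\beta-\beta_c\right)^2+\alpha^2=\rho^2,
\]
where $\beta_c$ and $\rho$ are explicit rational functions of the entries of $\mathbf{v}'_2$ and $\mathbf{u}$ (here $H^3\cdot H=3$, $H^2\cdot H^2=3$ on a cubic fourfold, which I would substitute throughout).

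Next I would reduce to a finite search. Any class $\mathbf{u}$ defining an actual wall must arise from a short exact sequence $0\to F\to E\to G\to 0$ in $\Coh^\beta(Y)$ with $E$ of class $\mathbf{v}'_2$; the subobject $F$ and quotient $G$ are tilt-semistable on the wall, so by the Bogomolov–Gieseker inequality of Theorem~\ref{thm:BGwithoutch3} applied to both $F$ and $G$, together with $\ch_0(F)+\ch_0(G)=3$, the possible ranks are severely constrained — the destabilizing subobject has rank $0$, $1$, $2$, or $-1,-2$ (i.e.\ $\H^{-1}\neq 0$). Using that any wall must have its center to the left of $\beta=0$ and contain $\overline\beta$ in its interior, and that $F\hookrightarrow E$ forces $\mu_\beta(F)$-type bounds on $c'$, only finitely many classes $\mathbf{u}$ can produce a wall of radius exceeding any given bound. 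Among these I would single out $\mathbf{u}_0=\ch_{\le 2}(\O_Y(-H))=(1,-H,\tfrac12 H^2)$ and compute the radius $\rho_0$ of its wall; then, comparing $\rho$ as a function of $\mathbf{u}$ over the finite list of competitors, I would verify $\rho_0$ is strictly the largest. The point is that classes with larger rank gap or with $c'$ closer to the "wrong" side give strictly smaller circles, and the line bundle $\O_Y(-H)$ sits exactly at the extreme allowed position.

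The step I expect to be the main obstacle is the finiteness/comparison argument: showing rigorously that no class $\mathbf{u}\in\Gamma$ gives a wall strictly larger than $W_0$. This is not merely a matter of listing a few candidates, because one must rule out subobjects of rank $-1$ or $-2$ (objects with nonzero $\H^{-1}$) whose Chern characters could a priori be large; here Theorem~\ref{thm:BGwithoutch3} is essential, since its equality case pins down that a rank $-r$ tilt-semistable object with $\Delta=0$ is a sum of shifted line bundles $\O_Y(dH)[1]$, and the general case gives the quadratic inequality needed to bound the remaining invariants. I would combine this with the geometric constraint that the center of $W_0$ lies at $\beta_c<\overline\beta<0$ and that any larger concentric-type wall would have to be cut by a class violating one of these Bogomolov–Gieseker bounds. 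Once the finite comparison is done, the lemma follows: $W_0$ is the wall $\nu_{\alpha,\beta}(\O_Y(-H))=\nu_{\alpha,\beta}(\mathbf{v}'_2)$, and by Corollary~\ref{cor:tilt} it is an actual wall, realized by the sheaves $F_\Gm$ attached to non-CM twisted cubics via the sequence of Remark~\ref{rmk:nonT}.
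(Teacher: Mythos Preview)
Your approach is essentially the same as the paper's: both reduce the question to a finite check by imposing the Bogomolov--Gieseker inequality of Theorem~\ref{thm:BGwithoutch3} on both pieces of a would-be destabilizing sequence, together with the positivity constraint $0<H^3\ch_1^\beta(F),\,H^3\ch_1^\beta(G)<H^3\ch_1^\beta(E)$ and the strict inequality $\Delta(F),\Delta(G)<\Delta(E)$. The paper then hands this finite system off to a computer (Schmidt's \textsc{SAGE} routine) at a single well-chosen vertical line.

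The one thing worth flagging is that the paper organizes the endgame more cleanly than your radius-comparison plan. Rather than computing the radius $\rho_0$ of the $\O_Y(-H)$-wall and then comparing it against a finite list of competitors, the paper observes that the semicircle $W_0$ given by $\alpha^2+(\beta+\tfrac56)^2=\tfrac1{36}$ passes through the point $(0,-1)$ on the $\beta$-axis. Since the semicircular walls for $\mathbf{v}'_2$ in the region $\beta<0$ are strictly nested, proving that \emph{no} wall meets the vertical line $\beta=-1$ forces every wall to lie weakly inside $W_0$. So the whole finiteness-plus-comparison step collapses to the single question ``do any classes satisfy the inequalities at $\beta=-1$?''---and the computer check answers no. This is logically equivalent to what you propose, but it avoids having to enumerate candidate walls and compare radii one by one.

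Finally, your last sentence (that $W_0$ is an actual wall, witnessed by non-CM $F_\Gm$) is correct but not part of this lemma; the paper defers that to Proposition~\ref{prop:tiltstabilityFC}.
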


\begin{proof}
By using Theorem~\ref{thm:BGwithoutch3}, the following inequalities have to be satisfied for a wall $W$ to exist (see \cite[Section 5.3]{S}).
Let $(\alpha,\beta)\in W$ and let $0\to F \to E \to G \to 0$ be an exact sequence defining a wall with $\ch_{\leq 2}(E)=\mathbf{v}'_2$.
Then
\begin{align*}
& \nu_{\alpha,\beta}(F)=\nu_{\alpha,\beta}(E)=\nu_{\alpha,\beta}(G)\\
& 0 < H^3\ch_1^\beta(F),\, H^3\ch_1^\beta(G) < H^3\ch_1^\beta(E)\\
& 0\leq \Delta(F),\, \Delta(G) < \Delta(E).
\end{align*}
Moreover, we need to impose $\ch_{\leq 2}(F),\ch_{\leq 2}(G)\in\Gamma$.

All of these put enough inequalities to implement a computer program to compute the possible classes of $F$ and $G$ in $\Gm$.
For example, the code for a concrete implementation in SAGE \cite{SAGE} can be found at Benjamin Schmidt's webpage:
\[
\verb+https://sites.google.com/site/benjaminschmidtmath/research+
\]

By doing the computation at $\beta=-1$, this shows that there is no numerical wall intersecting the line $\beta=-1$, that is, no classes satisfy the inequalities.
The wall $W_0$ is given by the equation
\[
\alpha^2 + \left(\beta + \frac 56 \right)^2 = \frac{1}{36}.
\]
It passes through the point $(0,-1)$.
Since walls are strictly nested, this is the largest wall
(see Figure~\ref{fig:firstwall}).
\end{proof}

Hence, for a very general cubic fourfold, we have the following precise version of Corollary~\ref{cor:tilt}.

\begin{cor}\label{cor:precisemoduli}
Let $\mathbf{v}'_2 = (3,0,-H^2)$ and let $(\alpha,\beta)$ be outside the numerical wall $W_0$ and such that $\beta<0$.
Then for an object $E\in\Db(Y)$ with $\ch_{\leq 2}(E)=\mathbf{v}'_2$, the following are equivalent:
\begin{enumerate}
\item $E$ is $\nu_{\alpha,\beta}$-semistable;
\item $E$ is $\nu_{\alpha,\beta}$-stable;
\item $E$ is a torsion-free Gieseker-stable sheaf.
\end{enumerate}
\end{cor}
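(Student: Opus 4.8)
The plan is to derive the three equivalences from the large-volume comparison in Theorem~\ref{thm:ch2}, the wall-and-chamber structure of tilt-stability, and the explicit location of $W_0$ provided by Lemma~\ref{lem:firstwall}. Two elementary observations set the stage: the class $\mathbf{v}'_2 = (3,0,-H^2)$ is primitive in $\Gamma$ (in terms of the generators $(1,0,0)$, $(0,H,0)$, $(0,0,\tfrac12 H^2)$ its coordinates are $(3,0,-2)$, with no common factor), and for every $\beta<0$ one has $\ch_0=3>0$ and $H^3\ch^\beta_1(\mathbf{v}'_2) = -3\beta H^4 > 0$, so the hypotheses of Theorems~\ref{thm:ch2} and \ref{thm:BGwithoutch3} are satisfied along the whole half-line $\alpha>0$, $\beta<0$ fixed.

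First I would prove (1) $\Leftrightarrow$ (2). Suppose $(\alpha,\beta)$ lies outside $W_0$ with $\beta<0$. By Lemma~\ref{lem:firstwall}, together with the fact that the numerical walls for $\mathbf{v}'_2$ in the region $\beta<0$ are strictly nested semicircles all contained in $W_0$, there is no numerical wall for $\mathbf{v}'_2$ through $(\alpha,\beta)$. If $E$ with $\ch_{\leq2}(E)=\mathbf{v}'_2$ were strictly $\nu_{\alpha,\beta}$-semistable, it would have a proper nonzero subobject $F$ with $\nu_{\alpha,\beta}(F)=\nu_{\alpha,\beta}(E)$; since $\mathbf{v}'_2$ is primitive and $\nu_{\alpha,\beta}(E)$ is finite, the class $\ch_{\leq2}(F)$ is neither $0$ nor a multiple of $\mathbf{v}'_2$, so the equality $\nu_{\alpha,\beta}(F)=\nu_{\alpha,\beta}(\mathbf{v}'_2)$ defines a numerical wall through $(\alpha,\beta)$ — a contradiction. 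Hence $\nu_{\alpha,\beta}$-semistability and $\nu_{\alpha,\beta}$-stability coincide for such $E$.

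Next I would establish (2) $\Leftrightarrow$ (3) by a deformation-in-$(\alpha,\beta)$ argument. Set $\mathcal R := \{(\alpha,\beta)\,:\,\alpha>0,\ \beta<0,\ (\alpha,\beta)\text{ outside }W_0\}$. The disk bounded by $W_0$ is centred on the line $\alpha=0$ at $\beta=-\tfrac56$ with radius $\tfrac16$, so $\mathcal R$ is connected, and for each fixed $\beta<0$ it contains all $(\alpha,\beta)$ with $\alpha\gg0$. By Lemma~\ref{lem:firstwall} no numerical wall for $\mathbf{v}'_2$ meets $\mathcal R$, hence, by the local finiteness and continuity of the wall-and-chamber structure for tilt-stability (\cite[Propositions~B.2 \& B.5]{BMS}), the set of $\nu_{\alpha,\beta}$-stable objects $E$ with $\ch_{\leq2}(E)=\mathbf{v}'_2$ does not depend on the choice of $(\alpha,\beta)\in\mathcal R$. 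Evaluating at a point of $\mathcal R$ with $\alpha\gg0$ and invoking Theorem~\ref{thm:ch2} identifies this set with the set of torsion-free Gieseker-stable sheaves with the prescribed invariants; here it is the ``only if'' part of that theorem which guarantees that a tilt-stable object with these invariants and $\alpha\gg0$ is an honest torsion-free sheaf, so that the truncation $\ch_{\leq2}$ really does pin down a sheaf. Combining this with (1) $\Leftrightarrow$ (2) closes the cycle of implications.

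I expect that the only substantial input is Lemma~\ref{lem:firstwall} — the assertion that $W_0$ is the outermost wall, which relies on the computer-assisted enumeration of potential destabilizing classes — and this is already in place. What remains is essentially routine: the connectedness of $\mathcal R$, the observation that a wall is by definition a wall for the fixed class $\mathbf{v}'_2$ (so no extraneous walls can intrude), and the assembly of the three equivalences as above. The one point that deserves a sentence of care, rather than real work, is the passage between ``tilt-stable object'' and ``torsion-free sheaf'', which is supplied by Theorem~\ref{thm:ch2}.
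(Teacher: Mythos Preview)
Your argument is correct and is exactly the standard unpacking of what the paper leaves implicit: the corollary is stated in the paper with no proof beyond the word ``Hence'', as an immediate consequence of Theorem~\ref{thm:ch2} together with Lemma~\ref{lem:firstwall} and the nested-wall structure. Your treatment of (1)$\Leftrightarrow$(2) via primitivity and the absence of numerical walls outside $W_0$, and of (2)$\Leftrightarrow$(3) via constancy of stability in the chamber and the large-$\alpha$ comparison, is precisely the intended reasoning.
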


A similar argument as in Lemma~\ref{lem:firstwall} shows which classes in $\Gamma$ give the numerical wall $W_0$.

\begin{lem}\label{lem:classeswall}
Let $\mathbf{u}\in\Gamma$ be such that $\nu_{\alpha,\beta}(\mathbf{u}) = \nu_{\alpha,\beta}(\mathbf{v}'_2)$ for all $(\alpha,\beta)\in W_0$.
Then $\mathbf{u}$ is one of the following classes:
\begin{align*}
& (-1, H, -H^2/2),\, (-2, 2H, -H^2),\, (-3, 3H, -3H^2/2),\, (-4, 4H, -2H^2),\\
& (-5, 5H, -5H^2/2),\, (-6, 6H, -3H^2),\, (4, -H, -H^2/2),\,  (5, -2H, 0), \\
&  (6, -3H, H^2/2),\,  (7, -4H, H^2),\, (8, -5H, 3H^2/2),\, (9, -6H, 2H^2).
\end{align*}
\end{lem}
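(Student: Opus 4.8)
The plan is to follow the strategy of Lemma~\ref{lem:firstwall}: first isolate the conceptual content, then finish by a finite enumeration. Throughout, set $\mathbf{w}_0 := \ch_{\leq 2}(\O_Y(-H)) = \left(1,-H,\tfrac12 H^2\right)\in\Gamma$, so that by Lemma~\ref{lem:firstwall} the wall $W_0$ is precisely the locus where $\nu_{\alpha,\beta}(\mathbf{v}'_2)=\nu_{\alpha,\beta}(\mathbf{w}_0)$.

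First I would observe that, for each fixed $(\alpha,\beta)$, the quantity $\nu_{\alpha,\beta}$ is the ratio of the two coordinates of the $\RR$-linear map $\ell_{\alpha,\beta}\colon\Gamma\otimes\RR\to\RR^2$, $\mathbf{w}\mapsto\left(H^2\ch_2^\beta(\mathbf{w})-\tfrac{\alpha^2}{2}H^4\ch_0(\mathbf{w}),\,H^3\ch_1^\beta(\mathbf{w})\right)$, and that a class in $\Gamma$ is a triple with $\ch_1\in\ZZ H$ and $\ch_2\in\tfrac12\ZZ H^2$, hence is determined by $\left(\ch_0,H^3\ch_1,H^2\ch_2\right)$. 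Two classes have the same finite value of $\nu_{\alpha,\beta}$ exactly when their $\ell_{\alpha,\beta}$-images are nonzero and proportional. One checks directly that $\ell_{\alpha,\beta}$ has rank two and $\ell_{\alpha,\beta}(\mathbf{v}'_2)\neq0$ for every $(\alpha,\beta)$ with $\beta<0$; hence the preimage under $\ell_{\alpha,\beta}$ of the line $\RR\cdot\ell_{\alpha,\beta}(\mathbf{v}'_2)$ is a $2$-plane, and along $W_0$ it contains the independent classes $\mathbf{v}'_2$ and $\mathbf{w}_0$, so it equals the rational plane $\Pi:=\langle\mathbf{v}'_2,\mathbf{w}_0\rangle_\QQ$. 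Therefore any $\mathbf{u}$ with $\nu_{\alpha,\beta}(\mathbf{u})=\nu_{\alpha,\beta}(\mathbf{v}'_2)$ for all $(\alpha,\beta)\in W_0$ lies in $\Pi$, and a short computation gives $\Pi\cap\Gamma=\ZZ\mathbf{v}'_2\oplus\ZZ\mathbf{w}_0$.

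It then remains to cut this rank-two lattice down to the finite list, using the same numerical constraints as in the proof of Lemma~\ref{lem:firstwall}. If $\mathbf{u}=\ch_{\leq 2}(F)$, respectively $\ch_{\leq 2}(G)$, for a short exact sequence $0\to F\to E\to G\to0$ in $\Coh^\beta(Y)$ defining $W_0$ with $\ch_{\leq 2}(E)=\mathbf{v}'_2$, then at points of $W_0$ one has $0<H^3\ch_1^\beta(F),\,H^3\ch_1^\beta(G)<H^3\ch_1^\beta(E)$ and, by Theorem~\ref{thm:BGwithoutch3}, $0\leq\Delta(F),\,\Delta(G)<\Delta(E)$. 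Writing $\mathbf{u}=a\mathbf{v}'_2+b\mathbf{w}_0$ with $a,b\in\ZZ$ and using $\Delta(\mathbf{v}'_2)=54$, $\Delta(\mathbf{w}_0)=0$, and the polarization $\Delta(\mathbf{v}'_2,\mathbf{w}_0)=-\tfrac92$, one obtains $\Delta(\mathbf{u})=54a^2-9ab$ and $\Delta(\mathbf{v}'_2-\mathbf{u})=54(1-a)^2+9b(1-a)$. Imposing $0\leq\Delta(\mathbf{u}),\Delta(\mathbf{v}'_2-\mathbf{u})<54$ forces $a\in\{0,1\}$, and then $b\in\{1,\dots,6\}$ if $a=1$ and $b\in\{-6,\dots,-1\}$ if $a=0$; the $H^3\ch_1^\beta$-inequalities are then automatically satisfied for $(\alpha,\beta)\in W_0$ (where $\beta\in(-1,-\tfrac23)$) and exclude nothing further. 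Translating back, the possible classes are exactly $-k\mathbf{w}_0$ and $\mathbf{v}'_2+k\mathbf{w}_0$ for $k=1,\dots,6$, which is the asserted list, the two families being complementary via $\mathbf{v}'_2=(-k\mathbf{w}_0)+(\mathbf{v}'_2+k\mathbf{w}_0)$. As in Lemma~\ref{lem:firstwall}, the same enumeration is also produced by the \texttt{SAGE} routine cited there.

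I do not expect a genuine obstacle: the argument is linear algebra plus a finite check. The only point requiring care is the passage from the a priori infinite set $\Pi\cap\Gamma$ to the finite list --- namely, recognizing that ``giving the wall $W_0$'' must incorporate the Bogomolov--Gieseker bounds $\Delta(F),\Delta(G)<\Delta(E)$, and that it is the constraint on the complementary class that excludes $|a|\geq2$.
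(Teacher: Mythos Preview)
Your proposal is correct and follows essentially the same approach as the paper: both impose the numerical constraints listed in the proof of Lemma~\ref{lem:firstwall} (equality of slopes on $W_0$, the inequalities $0<H^3\ch_1^\beta(F),H^3\ch_1^\beta(G)<H^3\ch_1^\beta(E)$, and $0\le\Delta(F),\Delta(G)<\Delta(E)$) and enumerate the solutions. The paper simply evaluates these at the apex $(\alpha,\beta)=(\tfrac16,-\tfrac56)$ of $W_0$ and defers the enumeration to the \textsc{SAGE} routine; you instead make the linear-algebra content explicit---observing that the slope-equality along $W_0$ forces $\mathbf{u}\in\Pi=\ZZ\mathbf{v}'_2\oplus\ZZ\mathbf{w}_0$---and then solve the $\Delta$-inequalities on this rank-two lattice by hand. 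Your version is more self-contained and makes transparent why the list is finite (it is the bound on $\Delta$ of the complementary class that rules out $|a|\ge 2$), but the underlying argument is the same.
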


\begin{proof}
This is a straightforward computation, which again can be implemented in a computer program.
We only need to check which classes in $\Gamma$ satisfy the inequalities in Lemma~\ref{lem:firstwall} for $(\alpha,\beta)=(\frac16, -\frac56)$. 
Here we use again the SAGE code at Benjamin Schmidt's webpage.
\end{proof}

This gives us how the sheaves $F_\Gm$ destabilize at the wall $W_0$, determining for which sheaves $F_\Gm$, $W_0$ is an actual wall.

\begin{prop}\label{prop:tiltstabilityFC}
Let $(\alpha,\beta)\in W_0$.
\begin{enumerate}
\item Let $C$ be an aCM twisted cubic curve in $Y$.
Then $F_\Gm$ is $\nu_{\alpha,\beta}$-stable.
\item Let $C$ be a non-CM cubic curve in $Y$.
Then $F_\Gm$ is $\nu_{\alpha,\beta}$-semistable and a Jordan--H\"older filtration in $\Coh^\beta(Y)$ is given by
\begin{equation}\label{eq:destabilizing}
0\to N_\Gm \to F_\Gm \to \O_Y(-H)[1] \to 0.
\end{equation}
\end{enumerate}
\end{prop}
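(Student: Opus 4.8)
The plan is to compute the largest possible destabilizing subobject/quotient for $F_\Gm$ on the wall $W_0$ using the list of admissible classes from Lemma~\ref{lem:classeswall}, and then to distinguish the aCM from the non-CM case via the vanishing (resp. non-vanishing) of $\Hom(F_\Gm,\O_Y(-H)[1])$ established in Remark~\ref{rmk:nonT}. First I would observe that on $W_0$ the two sides of any destabilizing sequence $0\to F\to F_\Gm\to G\to 0$ in $\Coh^\beta(Y)$ must have truncated Chern characters in the list of Lemma~\ref{lem:classeswall}, must add up to $\mathbf{v}_2'=(3,0,-H^2)$, and must satisfy the numerical constraints $0<H^3\ch_1^\beta(F),H^3\ch_1^\beta(G)<H^3\ch_1^\beta(F_\Gm)$ together with $0\le \Delta(F),\Delta(G)<\Delta(F_\Gm)=18$. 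A short combinatorial check (the same kind of SAGE computation used in the lemmas) shows that the only possibility is, up to relabeling, $\ch_{\le2}(G)=(-1,H,-H^2/2)$ and $\ch_{\le2}(F)=(4,-H,-H^2/2)$; the class $(-1,H,-H^2/2)$ is exactly $\ch_{\le2}(\O_Y(-H)[1])$, and since $\O_Y(-H)[1]$ has $\Delta=0$, Theorem~\ref{thm:BGwithoutch3} forces any $\nu_{\alpha,\beta}$-semistable object with that truncated character to be $\O_Y(-H)^{\oplus r}[1]$; ranks then give $r=1$, so $G\cong \O_Y(-H)[1]$.

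For part~(1), if $\Gm$ is aCM then Lemma~\ref{lem:FinTY} gives $F_\Gm\in\cat{T}_Y$, and Remark~\ref{rmk:nonT} (combined with Lemma~\ref{lem:FinTY}) shows $\Hom(F_\Gm,\O_Y(-H)[1])=0$. Hence there is no surjection $F_\Gm\twoheadrightarrow \O_Y(-H)[1]$ in $\Coh^\beta(Y)$: indeed a surjection in the heart with kernel $F$ would, since $\O_Y(-H)[1]$ and its numerically forced kernel both lie in $\Coh^\beta(Y)$, give a nonzero morphism in $\Db(Y)$ from $F_\Gm$ to $\O_Y(-H)[1]$. By the analysis of the previous paragraph this was the unique numerical possibility for a destabilizing sequence, so $F_\Gm$ has no destabilizing subobject on $W_0$; combined with Corollary~\ref{cor:precisemoduli} (which gives stability just outside $W_0$) and the openness of stability, $F_\Gm$ is $\nu_{\alpha,\beta}$-stable for every $(\alpha,\beta)\in W_0$.

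For part~(2), if $C$ is non-CM then by Remark~\ref{rmk:nonT} we have $\Hom(F_\Gm,\O_Y(-H)[1])\cong\CC$, which provides a nonzero map $F_\Gm\to\O_Y(-H)[1]$ in $\Db(Y)$; I would check that $\O_Y(-H)[1]\in\Coh^\beta(Y)$ for $\beta<0$ (it is, since $\mu_\beta(\O_Y(-H))=-1-\beta\le 0$ on the relevant part of $W_0$, whence $\O_Y(-H)\in\mathbf{F}^\beta$), that $F_\Gm\in\Coh^\beta(Y)$ (Corollary~\ref{cor:precisemoduli}, noting $F_\Gm$ is a sheaf in $\mathbf{F}^\beta$ away from its torsion but more directly it is $\nu_{\alpha,\beta}$-stable just outside $W_0$ hence in the heart), and that this map is surjective in the heart — surjectivity follows because the cokernel, being a quotient in $\Coh^\beta(Y)$ of $\O_Y(-H)[1]$ by the image of $F_\Gm$, must have $\ch_{\le2}$ on the admissible list and compatible with the $\mathbf{v}_2'$ bookkeeping, and the only option is $0$. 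Call the kernel $N_\Gm$; then $\ch_{\le2}(N_\Gm)=(4,-H,-H^2/2)$, it lies in $\Coh^\beta(Y)$, and on $W_0$ one has $\nu_{\alpha,\beta}(N_\Gm)=\nu_{\alpha,\beta}(\O_Y(-H)[1])=\nu_{\alpha,\beta}(F_\Gm)$ by construction of the wall, so \eqref{eq:destabilizing} is a short exact sequence of objects of the same tilt-slope; this already shows $F_\Gm$ is strictly $\nu_{\alpha,\beta}$-semistable. To see it is a Jordan--H\"older filtration it remains to check that $N_\Gm$ and $\O_Y(-H)[1]$ are themselves $\nu_{\alpha,\beta}$-stable on $W_0$: for $\O_Y(-H)[1]$ this is the stability of line bundles (Theorem~\ref{thm:BGwithoutch3}, together with \cite[Proposition~7.4.1]{BMT}); for $N_\Gm$ one runs the same class-list argument, noting any sub/quotient of $N_\Gm$ of equal slope would refine the destabilizing data of $F_\Gm$ and produce, again via Lemma~\ref{lem:classeswall}, only the trivial options.

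The main obstacle I expect is the surjectivity/kernel bookkeeping in part~(2): passing from the existence of a nonzero map $F_\Gm\to\O_Y(-H)[1]$ in $\Db(Y)$ to an actual short exact sequence in the abelian category $\Coh^\beta(Y)$ with the stated kernel requires carefully controlling cohomology sheaves (the map is a priori only a morphism in the derived category, and one must rule out that its cone has, say, extra torsion), and this is exactly where the primitivity of $\mathbf{v}_2'$, the finiteness of the class list in Lemma~\ref{lem:classeswall}, and the $\Delta<18$ constraint do the real work. Everything else is either a direct appeal to Remark~\ref{rmk:nonT}, Lemma~\ref{lem:FinTY}, Corollary~\ref{cor:precisemoduli}, and Theorem~\ref{thm:BGwithoutch3}, or a finite numerical check of the type already automated in the preceding lemmas.
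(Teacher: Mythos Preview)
Your overall strategy matches the paper's, but there is a genuine gap in the first paragraph: the ``short combinatorial check'' cannot rule out the quotient classes $a\cdot(-1,H,-H^2/2)$ for $a=2,\ldots,6$. All twelve classes in Lemma~\ref{lem:classeswall} already satisfy the numerical constraints you list (indeed that is precisely what the lemma records), so pure combinatorics on $\Gamma$ will never single out $a=1$. (Incidentally, $\Delta(F_\Gm)=54$, not $18$.) The paper's fix is to take $Q$ to be a \emph{stable} quotient of $F_\Gm$ on $W_0$ (the last Jordan--H\"older factor). Then $Q$ has class $a\cdot(-1,H,-H^2/2)$ for some $a$, so $\Delta(Q)=0$ and $\H^{-1}(Q)\neq 0$; Theorem~\ref{thm:BGwithoutch3} then forces $Q\cong\O_Y(-H)^{\oplus r}[1]$, and \emph{stability} (not a rank count against a fixed class) forces $r=1$. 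Once you know every stable quotient is $\O_Y(-H)[1]$, the aCM case follows immediately from $\Hom(F_\Gm,\O_Y(-H)[1])=0$.

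Two further simplifications, both used in the paper. First, the surjectivity worry in part~(2) evaporates once you note that $F_\Gm$ is $\nu_{\alpha,\beta}$-\emph{semistable} on $W_0$ (this is Corollary~\ref{cor:precisemoduli} by passing to the wall) and $\O_Y(-H)[1]$ is $\nu_{\alpha,\beta}$-stable of the same slope: any nonzero morphism from a semistable object to a stable object of equal slope is automatically surjective in the heart. No cokernel bookkeeping is needed. Second, the stability of $N_\Gm$ is cleanest via the Hom count: since $\Hom(F_\Gm,\O_Y(-H)[1])\cong\CC$ is exhausted by the surjection, one gets $\Hom(N_\Gm,\O_Y(-H)[1])=0$; then repeating the ``only possible stable quotient is $\O_Y(-H)[1]$'' argument shows $N_\Gm$ has no stable quotient of equal slope, hence is stable.
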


\begin{proof}
By Corollary~\ref{cor:precisemoduli}, all $F_\Gm$ are semistable at the numerical wall $W_0$.
Assume that $F_\Gm$ is not stable.
Then, by Lemma~\ref{lem:classeswall}, there is a stable quotient $F_\Gm \rra Q$ with $\ch_{\leq 2}(Q)=a \cdot (-1,H,-H^2/2)$ for $a=1,\ldots,6$.
But, by Theorem~\ref{thm:BGwithoutch3}, since $\ch_0(Q)<0$ (and so $\H^{-1}(Q)\neq0$) and $\Delta(Q)=0$, we have $Q=\O_Y(-H)[1]$ (and so $a=1$).

We can now use the computation in Remark~\ref{rmk:nonT}.
If $C$ is aCM, then $\Hom(F_\Gm,\O_Y(-H)[1])=0$, and so $F_\Gm$ is stable.
If $C$ is non-CM, then $\Hom(F_\Gm,\O_Y(-H)[1])\cong \CC$.
Hence, the kernel $N_\Gm:=\ker(F_\Gm\rra \O_Y(-H)[1])$ satisfies 
$\Hom(N_\Gm,\O_Y(-H)[1])=0$; therefore, by the same argument, $N_\Gm$ is  $\nu_{\alpha,\beta}$-stable and \eqref{eq:destabilizing} gives a Jordan--H\"older filtration for $F_\Gm$.
\end{proof}

Finally, by choosing $(\alpha,\beta)$ inside the wall $W_0$ and sufficiently close to the wall itself (actually, again by a straightforward computation, it is enough to pick $\beta=-5/6$ and arbitrary $0<\alpha<1/6$; we do not need this), we have the following stable objects.

Let us denote by $F_\Gm'=\mathrm{pr}(F_\Gm)\in\cat{T}_Y$, where $\mathop{\mathrm{pr}}\colon\Db(Y)\to \cat{T}_Y$ is the projection functor (left adjoint to the inclusion).
Note that $F_\Gm'=\cat{R}_{\O_Y(-H)}(F_\Gm)$; in particular, if $\Gm$ is an aCM twisted cubic curve, then $F_\Gm'=F_\Gm$.

\begin{prop}\label{prop:tiltstabilityFCafterwall}
Let $\beta=-5/6$ and $\alpha = 1/6-\epsilon$ for $\epsilon>0$ sufficiently small.
\begin{enumerate}
\item Let $C$ be an aCM twisted cubic curve in $Y$.
Then $F_\Gm'=F_\Gm$ is $\nu_{\alpha,\beta}$-stable.
\item Let $C$ be a non-CM cubic curve in $Y$.
Then $F_\Gm'$ is $\nu_{\alpha,\beta}$-stable and fits into a non-split exact sequence in $\Coh^\beta(Y)$
\[
0\to \O_Y(-H)[1] \to F_\Gm' \to N_\Gm  \to 0.
\]
\end{enumerate}
\end{prop}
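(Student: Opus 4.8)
The plan is to deduce stability just after the wall $W_0$ from the behaviour at the wall described in Proposition~\ref{prop:tiltstabilityFC}, using the standard principle that a strictly semistable object which has a single Jordan--H\"older filtration with stable factors whose phases separate correctly on one side of the wall becomes stable there. Concretely, fix $\beta=-5/6$ and let $\alpha$ decrease below $1/6$. For an aCM twisted cubic $\Gm$, Proposition~\ref{prop:tiltstabilityFC}(1) already gives $\nu_{\alpha,\beta}$-stability on the wall; since stability is an open condition in $(\alpha,\beta)$ (the wall and chamber structure is locally finite, \cite[Propositions~B.2 \& B.5]{BMS}), $F_\Gm=F_\Gm'$ stays stable in a neighbourhood, and in particular for $\alpha=1/6-\epsilon$ with $\epsilon$ small; moreover $F_\Gm'=F_\Gm$ because $F_\Gm\in\cat{T}_Y$ already, so the projection functor acts as the identity. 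This disposes of case (1).

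For case (2), $C$ non-CM, recall from Proposition~\ref{prop:tiltstabilityFC}(2) that on $W_0$ the object $F_\Gm$ is strictly semistable with Jordan--H\"older filtration $0\to N_\Gm\to F_\Gm\to \O_Y(-H)[1]\to 0$, both factors $\nu_{\alpha,\beta}$-stable of the same slope. The first step is to identify $F_\Gm'$: since $F_\Gm'=\mathrm{pr}(F_\Gm)=\cat{R}_{\O_Y(-H)}(F_\Gm)$ and $\mathrm{Hom}(F_\Gm,\O_Y(-H)[1])\cong\CC$ with all other $\mathrm{Hom}^\bullet(F_\Gm,\O_Y(-H))$ vanishing (Remark~\ref{rmk:nonT} together with Lemma~\ref{lem:FinTY}), the right mutation triangle shows that $F_\Gm'$ is obtained from $F_\Gm$ by replacing the sub-quotient $\O_Y(-H)[1]$ at the top by $\O_Y(-H)[1]$ at the bottom; that is, $F_\Gm'$ sits in a triangle $\O_Y(-H)[1]\to F_\Gm'\to N_\Gm$, and one checks this is a short exact sequence in $\Coh^\beta(Y)$ because both $\O_Y(-H)[1]$ and $N_\Gm$ lie in the heart (they are $\nu_{\alpha,\beta}$-stable of finite slope). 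Non-splitness is equivalent to $\mathrm{Ext}^1_{\Coh^\beta(Y)}(N_\Gm,\O_Y(-H)[1])\neq 0$ containing the class of this extension; this can be seen from the fact that $F_\Gm$ is the non-split self-extension-type glueing and applying $\mathrm{Hom}(-,\O_Y(-H)[1])$ and $\mathrm{Hom}(N_\Gm,-)$ to the defining sequence of $F_\Gm$, using $\mathrm{Hom}(N_\Gm,\O_Y(-H)[1])=0$ (established in the proof of Proposition~\ref{prop:tiltstabilityFC}) — if the sequence for $F_\Gm'$ split, then $F_\Gm'\cong \O_Y(-H)[1]\oplus N_\Gm$ would force $\mathrm{Hom}(F_\Gm',\O_Y(-H)[1])\neq0$, but $F_\Gm'\in\cat{T}_Y$ forces $\mathrm{Hom}(F_\Gm',\O_Y(-H)[1])=0$, a contradiction.

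The heart of the matter is stability of $F_\Gm'$ just inside the wall. The key computation is the sign of $\nu_{\alpha,\beta}(\O_Y(-H)[1])-\nu_{\alpha,\beta}(N_\Gm)$ as $\alpha$ crosses $1/6$: on the wall the two are equal, and one computes (using $\ch_{\leq 2}(\O_Y(-H)[1])=(-1,H,-H^2/2)$ and $\ch_{\leq 2}(N_\Gm)=\mathbf{v}_2'-(-1,H,-H^2/2)=(4,-H,-H^2/2)$, or directly via the see-saw property) that for $\alpha=1/6-\epsilon$ one has $\nu_{\alpha,\beta}(\O_Y(-H)[1])<\nu_{\alpha,\beta}(N_\Gm)$, i.e.\ the sub-object of $F_\Gm'$ has strictly smaller slope than the quotient — which is precisely the stability inequality for this particular sub-object. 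To upgrade this to stability against all sub-objects, I would argue as in the proof of Proposition~\ref{prop:tiltstabilityFC}: any destabilizing sub or quotient of $F_\Gm'$ at a point arbitrarily close to $W_0$ must, by continuity and the finiteness of walls, have its Chern character among the finite list of Lemma~\ref{lem:classeswall} (these are exactly the classes lying on $W_0$); the Bogomolov--Gieseker inequality of Theorem~\ref{thm:BGwithoutch3} then pins any such quotient with negative rank down to $\O_Y(-H)[1]$, and $\mathrm{Hom}(F_\Gm',\O_Y(-H)[1])=0$ (since $F_\Gm'\in\cat{T}_Y$) rules it out; a quotient with positive rank is handled symmetrically by the dual computation, or by noting $N_\Gm$ is stable and $F_\Gm'$ has the same reduced truncated invariants so a finer destabilization is impossible. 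The main obstacle I anticipate is the bookkeeping in this last step — making sure the list of potentially destabilizing classes on $W_0$ is genuinely exhausted near, but strictly inside, the wall, and that none of them other than $\O_Y(-H)[1]$ (and its complement $N_\Gm$) can appear — but this is entirely parallel to Lemma~\ref{lem:classeswall} and Proposition~\ref{prop:tiltstabilityFC}, so no genuinely new idea is needed.
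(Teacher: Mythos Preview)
Your proposal is correct and follows essentially the same route as the paper: openness of stability handles the aCM case, and for the non-CM case you identify $F_\Gm'$ via the right mutation, place it in the extension $\O_Y(-H)[1]\to F_\Gm'\to N_\Gm$ (non-split because $F_\Gm'\in\cat{T}_Y$ forces $\Hom(F_\Gm',\O_Y(-H)[1])=0$), observe that $F_\Gm'$ is semistable on $W_0$, and then rule out strict semistability just inside the wall by the same mechanism as in Proposition~\ref{prop:tiltstabilityFC}---any destabilizing quotient would force a nonzero map $F_\Gm'\to\O_Y(-H)[1]$.

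The only place your write-up is looser than it needs to be is the closing remark about ``a quotient with positive rank''. There is in fact nothing extra to do there: once you have computed that $\nu_{\alpha,\beta}(\O_Y(-H)[1])<\nu_{\alpha,\beta}(F_\Gm')$ inside $W_0$, the complementary positive-rank classes on the list of Lemma~\ref{lem:classeswall} automatically have slope \emph{larger} than $\nu_{\alpha,\beta}(F_\Gm')$, so they cannot occur as destabilizing quotients; equivalently, the corresponding subobject has negative rank and by Theorem~\ref{thm:BGwithoutch3} is $\O_Y(-H)^{\oplus a}[1]$, which does not destabilize from below. The paper simply says ``it must have $\O_Y(-H)[1]$ as quotient, which is impossible'', encoding exactly this. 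Your explicit slope computation for $\O_Y(-H)[1]$ versus $N_\Gm$ is thus redundant but harmless, and your phrase ``$F_\Gm'$ has the same reduced truncated invariants'' is not quite right and should be dropped.
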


\begin{proof}
The first part follows immediately by openness of tilt-stability
\cite[Proposition~B.5]{BMS}.
For the second, the fact that $F_C'$ fits into a non-split short exact sequence as in the statement is a direct computation by using Remark~\ref{rmk:nonT}.
To prove it is $\nu_{\alpha,\beta}$-stable, we first observe that $F_\Gm'$ is semistable at the wall $W_0$.
Then, as in the proof of Proposition~\ref{prop:tiltstabilityFC}, if $F'_\Gm$ is not stable inside the circle, it must have $\O_Y(-H)[1]$ as quotient, which is impossible.
\end{proof}

\begin{figure}[!h]\definecolor{xdxdff}{rgb}{0.49,0.49,1}
\definecolor{zzqqqq}{rgb}{1,0,0.2}
\definecolor{zzzzqq}{rgb}{0.6,0.6,0}
\definecolor{qqqqcc}{rgb}{0,0,0.8}
\definecolor{zzqqqq}{rgb}{0.6,0,0}
\definecolor{ffffff}{rgb}{1,1,1}
\definecolor{uququq}{rgb}{0.25,0.25,0.25}
\definecolor{qqqqff}{rgb}{0,0,1}
\begin{tikzpicture}[line cap=round,line join=round,>=triangle 45,x=4.0cm,y=4.0cm]
\clip(-0.07,-1.45) rectangle (1.64,0.05);
\draw (0,0)-- (1.2,0);
\draw [->] (0,0) -- (1.2,0);
\draw [->] (0,0) -- (0,-1.4);
\draw [shift={(0,-0.83)},color=zzqqqq]  plot[domain=-1.57:1.57,variable=\t]({1*0.17*cos(\t r)+0*0.17*sin(\t r)},{0*0.17*cos(\t r)+1*0.17*sin(\t r)});
\fill[color=zzzzqq,fill=zzzzqq,fill opacity=0.03] (0,0) -- (1.90,0) -- (1.90,-1.77) -- (0,-1.77) -- cycle;
\draw [shift={(0.01,-0.83)},color=ffffff,fill=ffffff,fill opacity=1.0]  (0,0) --  plot[domain=-1.57:1.57,variable=\t]({1*0.155*cos(\t r)+0*0.155*sin(\t r)},{0*0.155*cos(\t r)+1*0.155*sin(\t r)}) -- cycle ;
\begin{scriptsize}
\draw [domain=0.12792578123625104:1.9415421334702613] plot(\x,{(-0.23-0*\x)/0.28});
\draw (0.13,-0.15) node[anchor=north west] {Stability in {\color{zzzzqq} $A$} described in Corollary~\ref{cor:precisemoduli}.};
\draw (0.13,-0.25) node[anchor=north west] {Stability in {\color{zzqqqq} $B$} described in Proposition~\ref{prop:tiltstabilityFC}.};
\draw (0.13,-0.35) node[anchor=north west] {Stability in {\color{qqqqcc} $C$} described in Proposition~\ref{prop:tiltstabilityFCafterwall}.};
\end{scriptsize}
\begin{tiny}
\draw (1.17,0.00) node[anchor=south west] {$\alpha$};
\draw (0.02,-1.33) node[anchor=north west] {$\beta<0$};
\fill [color=qqqqff] (0,-0.83) circle (1.pt);
\fill [color=qqqqff] (0,-0.81) circle (1.pt);
\draw[color=qqqqff] (0.01,-0.83) node[anchor=south east] {$\overline{\beta}$};
\draw[color=zzqqqq] (0.15,-0.63) node {$W_0$};
\draw [color=zzqqqq,domain=0.0:1.9415421334702613] plot(\x,{(0*\x)/0.48});
\draw[color=zzqqqq] (1.42,0) node[anchor=north] {$\beta=0$};
\fill [color=qqqqcc] (0.13,-0.82) circle (1.pt);
\draw[color=qqqqcc] (0.15,-0.83) node[anchor=south east] {$C$};
\fill [color=zzqqqq] (0.17,-0.82) circle (1.pt);
\draw[color=zzqqqq] (0.15,-0.83) node[anchor=south west] {$B$};
\fill [color=zzzzqq] (0.38,-0.82) circle (1.pt);
\draw[color=zzzzqq] (0.36,-0.83) node[anchor=south west] {$A$};
\draw [dash pattern=on 4pt off 4pt, domain=0.0:1.9415421334702613] plot(\x,{(-0.48-0*
\x)/0.48});
\draw (1.42,-1) node[anchor=north] {$\beta=-1$};
\draw (1.42,-.83) node[anchor=south] {$\beta=-5/6$};
\end{tiny}
\end{tikzpicture}
\caption{First numerical wall $W_0$ for $\mathbf{v}'_2$}
\label{fig:firstwall}
\end{figure}
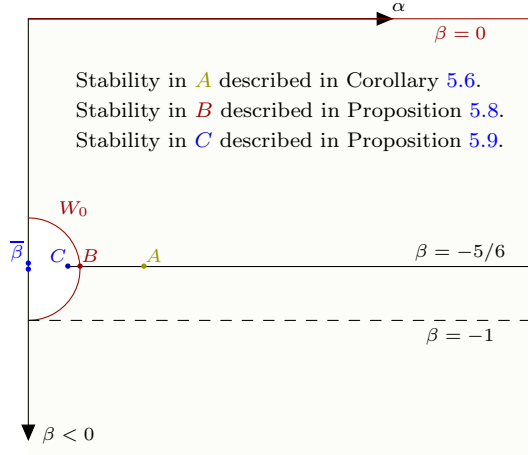

\subsection{The blow-up as a wall crossing}\label{subsec:birtransf}
Let $Y$ be a very general cubic fourfold.
Consider the Chern character $\mathbf{v}_2 = (3,0,-H^2,0,\frac{1}{4}H^4)$.
We can now complete the proof of Part~\eqref{item:main2b} of the Main Theorem.

We denote by $\mathfrak{M}_{\alpha,\beta}(\mathbf{v}_2)$ the moduli space of $\nu_{\alpha,\beta}$-semistable objects in $\Coh^\beta(Y)$ with Chern character equal to $\mathbf{v}_2$.
Since $\mathbf{v}_2' = (3,0,-H^2)$ is a primitive vector, if $(\alpha,\beta)$ is not on a wall for $\mathbf{v}_2'$, all objects are $\nu_{\alpha,\beta}$-stable, and so simple.
The moduli space of simple objects in $\Db(Y)$ is an algebraic space locally of finite type over $\CC$, by \cite{I1}.
By the main result in \cite{PT} (extended to tilt-stability for fourfolds with an analogous proof following \cite[Sections~4.5 \& 4.7]{PT}), if $(\alpha,\beta)$ is not on a wall for $\mathbf{v}_2'$, then $\mathfrak{M}_{\alpha,\beta}(\mathbf{v}_2)$ is an open subset of the moduli space of simple objects in $\Db(Y)$ and it is of finite type, separated, and proper over $\CC$.

By Part \eqref{item:main2a} of the Main Theorem and Corollary~\ref{cor:precisemoduli}, for $(\alpha,\beta)$ in the region outside the semi-circle $W_0$ and such that $\beta<0$, we have that $Z'(Y)$ is isomorphic to an irreducible component of $\mathfrak{M}_{\alpha,\beta}(\mathbf{v}_2)$, thus proving the first part of Part \eqref{item:main2b}.

Let $\mathcal{F}$ be a quasi-universal family on $Y\times Z'(Y)$, both as Gieseker-stable sheaves and as $\nu_{\alpha,\beta}$-stable objects, for $(\alpha,\beta)$ in the region outside the wall $W_0$ (see, for example, \cite[Appendix 2]{Mukai:K3} for the definition and existence of a quasi-universal family).
By \cite[Theorem~6.4]{Ku}, we have a semiorthogonal decomposition
\begin{equation}\label{eqn:quasi-universal}
\Db(Y \times Z'(Y)) = \langle \mathbf{T}_{Y \times Z'(Y)}, \O_Y\boxtimes\Db(Z'(Y)), \O_Y(H)\boxtimes\Db(Z'(Y)), \O_Y(2H)\boxtimes\Db(Z'(Y)) \rangle.
\end{equation}
Consider the relative projection $\mathcal{F}'$ of $\mathcal{F}$ on $\mathbf{T}_{Y \times Z'(Y)}$.
By Proposition~\ref{prop:tiltstabilityFCafterwall}, $\mathcal{F}'$ gives a quasi-universal family of $\nu_{\alpha,\beta}$-stable objects for $\beta=-5/6$ and $\alpha = 1/6-\epsilon$.
Let $\mathfrak{Z}$ denote the irreducible component of the moduli space $\mathfrak{M}_{1/6-\epsilon,-5/6}(\mathbf{v}_2)$ containing the sheaves $F_\Gm$ for an aCM twisted cubic $C$.
The family $\mathcal{F}'$ induces a morphism $a'\colon Z'(Y)\to \mathfrak{Z}$, which is birational on the locus corresponding to such sheaves $F_\Gm$.
Moreover, this also shows that the objects $F_\Gm'$ of Proposition~\ref{prop:tiltstabilityFCafterwall} also lie in the same irreducible component.
But
\[
\dim \Hom(F_\Gm',F_\Gm'[i]) = \begin{cases} 1,\text{ if } i=0,2\\ 8, \text{ if } i=1\\ 0, \text{ otherwise.} \end{cases}
\]
Hence, $\mathfrak{Z}$ is smooth and therefore a connected component of $\mathfrak{M}_{1/6-\epsilon,-5/6}(\mathbf{v}_2)$.

To finish the proof of Part~\eqref{item:main2b} of the Main Theorem, we only need to show that the map $a'\colon Z'(Y)\to \mathfrak{Z}$ is the contraction $b\colon Z'(Y)\to Z(Y)$ described in \cite[Section~4.5]{LLSvS}.
By \cite[Theorem~2]{Luo} (the same argument has been used in \cite[Theorem~3.10]{LMS1}), we only have to show that $a'$ contracts the same locus as the morphism $b$.
Namely, by Proposition~\ref{prop:tiltstabilityFC} and Proposition~\ref{prop:tiltstabilityFCafterwall}, this amounts to showing that for two non-CM curves $C_1$ and $C_2$, we have $F_{C_1}'\cong F_{C_2}'$ if and only if $C_1$ and $C_2$ are in the same fiber of $b$.
But \cite[Lemma~1 \& Proposition~2]{AL} exactly say what we want\footnote{To be precise, in \cite{AL}, the authors consider the Kuznetsov component $\cat{T}'_Y$ in \eqref{eqn:so2}, which is equivalent to $\cat{T}_Y$.}.

\section{$Z$ is generically a moduli space of Bridgeland-stable objects}\label{sec:Bridgelandstable}

In this section we prove Part~\eqref{item:main2c} of the Main Theorem.
The proof is based on the results in the previous section and on the construction in \cite{BLMS}. In fact, we will prove that the objects $F_C'$ are also Bridgeland-stable for a very general cubic fourfold.

\subsection{Bridgeland stability}\label{subsec:BridgelandStability}
In this section we give a brief recall on Bridgeland stability on the Kuznetsov component of a very general cubic fourfold.
We start by recalling a few results from \cite[Section~2]{AT} (see also \cite[Proposition and Definition~9.6]{BLMS}) on the Mukai structure for the Kuznetsov component.

Let the numerical K-group be the quotient of the Grothendieck group $K(\cat{T}_Y)$ by the kernel of the Euler characteristic pairing, $\chi(E, F ) = \sum_i (-1)^i \dim \Ext^i (E, F )$.
For a very general cubic fourfold we denote the numerical K-group of $\cat{T}_Y$ by $\Lambda$. 
We denote surjection $\mathbf{v}\colon K(\cat{T}_Y)\twoheadrightarrow\Lambda$ is called the Mukai vector.
Then $\Lambda$ has rank two and is generated by the classes 
\[
{\llambda}_1 := \mathbf{v}([\mathop{\mathrm{pr}}\O_L(H)])\quad \text{ and } \quad
{\llambda}_2 := \mathbf{v}([\mathop{\mathrm{pr}}\O_L(2H)]),
\]
where $L\subset Y$ denotes a line and 
$\mathop{\mathrm{pr}}:\Db(Y)\to \cat{T}_Y$ is the projection functor.
The bilinear symmetric even form $(-,-)=-\chi(-,-)$ is called the \emph{Mukai pairing}. In view of \cite[Section 2.4]{AT}, the intersection matrix of the Mukai pairing with respect to the generators ${\llambda}_1$ and ${\llambda}_2$ is:
\[
\left(\begin{array}{cc} 2 &-1\\ -1&2 \end{array}\right).
\]

\begin{defn}\label{def:stability}
Let $Y$ be a very general cubic fourfold.
A {\em Bridgeland stability condition} on $\cat{T}_Y$ with respect to the lattice $\Lambda$ is a pair $\sigma= (Z,\P)$ consisting of 
group homomorphism $Z\colon \Lambda\to\CC$ of maximal rank
and full additive subcategories $\P(\phi) \subset \cat{T}_Y$
for each $\phi\in\RR$, satisfying:
\begin{enumerate}[{\rm (a)}]
\item if $0\neq E \in \P(\phi)$, then $Z(E)\in \RR_{>0}\,\exp(i\pi\phi)$;\footnote{ 
By abuse of notation, we write $Z(E)$ instead of $Z(\mathbf{v}([E]))$ and $\mathbf{v}(E)$ instead of $\mathbf{v}([E])$ for any $E \in \cat{T}_Y$.}
\item for all $\phi\in\RR$, $\P(\phi+1)=\P(\phi)[1]$;
\item if $\phi_1>\phi_2$ and $E_j\in \P(\phi_j)$, then $\Hom_{\cat{T}_Y}(E_1,E_2)=0$;
\item (HN-filtrations) for every nonzero $E \in \cat{T}_Y$ there exists a finite sequence of morphisms
\[ 0 = E_0 \xrightarrow{s_1} E_1 \to \dots \xrightarrow{s_m} E_m = E \]
such that the cone of $s_i$ is in $\P(\phi_i)$ for some sequence
$\phi_1 > \phi_2 > \dots > \phi_m$ of real numbers.
\end{enumerate}
\end{defn}

The nonzero objects of $\P(\phi)$ are said to be \emph{$\sigma$-semistable} of \emph{phase} $\phi$, and the
simple objects of $\P(\phi)$ are said to be \emph{$\sigma$-stable}.
Note that, since $\Lambda$ has rank two and $Z$ has maximal rank, any stability condition on $\cat{T}_Y$ with respect to the lattice $\Lambda$ satisfies automatically the \emph{support property} (see, for example, \cite[Remark~2.6]{BLMS}).
In particular, Jordan-H\"older filtrations exist as well.

We denote by $\Stab(\cat{T}_Y)$ the space of stability conditions on $\cat{T}_Y$ with respect to the lattice $\Lambda$; we know that $\Stab(\cat{T}_Y)\neq \emptyset$ thanks to \cite[Theorem~1.2]{BLMS} and it has the structure of complex manifold of dimension two, by \cite[Corollary~1.3]{Bridgeland:Stab}.
We denote by $\overline{\sigma}=(\overline{Z},\overline{\P})$ the Bridgeland stability condition constructed in \cite[Section 9]{BLMS}. For the purposes of this paper, we do not need an explicit description of $\overline{\sigma}$. The only key feature is the ordering of the phases with respect to $\overline{\sigma}$ of two special objects related to lines in $Y$ which is summarized in \eqref{eqn:phasesord}.

The moduli space $\MMM^{\mathrm{spl}}(\cat{T}_Y)$ of simple objects in $\cat{T}_Y$ is an algebraic space locally of finite type over $\CC$.
Indeed, as remarked in Section~\ref{subsec:birtransf}, the corresponding statement is true for simple objects in $\Db(Y)$ by \cite{I1}. Moreover, belonging to the Kuznetsov component $\cat{T}_Y$ is an open condition for an object in $\Db(Y)$, by semicontinuity.

Let us fix a primitive non-zero Mukai vector $\mathbf{v}_0\in\Lambda$.
Let $\sigma=(Z,\P)\in\Stab(\cat{T}_Y)$ and let $\phi_0\in\RR$ be such that $Z(\mathbf{v}_0)\in \RR_{>0}\,\exp(i\pi\phi_0)$.
We denote by $\MMM_{\sigma}(\mathbf{v}_0,\phi_0)\subset\MMM^{\mathrm{spl}}(\cat{T}_Y)$ the subset parameterizing $\sigma$-stable objects in $\cat{T}_Y$ with Mukai vector $\mathbf{v}_0$ and phase $\phi_0$.
Since $\mathbf{v}_0$ is primitive and the rank of $\Lambda$ is two, all $\sigma$-semistable objects are $\sigma$-stable.
It is expected that $\MMM_{\sigma}(\mathbf{v}_0,\phi_0)$ is an algebraic space of finite type, separated, and proper over $\CC$.
While this is not available yet, it is not needed for our purposes, since the following result holds:

\begin{prop}[{\cite[Proposition~A.7]{BLMS}}]\label{prop:MukaiConnectedness}
Under the above assumptions, assume that there exists a smooth integral projective variety $M\subset \MMM_{\sigma}(\mathbf{v}_0,\phi_0)$ of dimension $\mathbf{v}_0^2+2$ such that the inclusion $M \hookrightarrow \MMM^{\mathrm{spl}}(\cat{T}_Y)$ is an algebraic morphism.
Then $M = \MMM_{\sigma}(\mathbf{v}_0,\phi_0)$ as algebraic spaces\footnote{To be precise, the functor associated to $\MMM_{\sigma}(\mathbf{v}_0,\phi_0)$ is coarsely represented by $M$.}.
\end{prop}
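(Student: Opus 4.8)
The plan is to use the morphism $\iota\colon M\to\MMM^{\mathrm{spl}}(\cat{T}_Y)$ to realize $M$ as an open and closed subspace of $\MMM_\sigma(\mathbf{v}_0,\phi_0)$, and then to conclude by connectedness. Two ingredients drive the argument. First, a deformation-theoretic computation showing that at every $\sigma$-stable object $E$ with Mukai vector $\mathbf{v}_0$ the Zariski tangent space of $\MMM^{\mathrm{spl}}(\cat{T}_Y)$ has dimension exactly $\mathbf{v}_0^2+2$, so that the local dimension there is at most $\mathbf{v}_0^2+2$. Second, a dimension count: the existence of the integral \emph{proper} variety $M$ of precisely the dimension $\mathbf{v}_0^2+2$ inside $\MMM^{\mathrm{spl}}(\cat{T}_Y)$ forces the latter to be smooth of dimension $\mathbf{v}_0^2+2$ along the image of $M$, so that no obstruction-theoretic input about $\cat{T}_Y$ is actually needed.

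Concretely, I would first record the structural facts: $\MMM^{\mathrm{spl}}(\cat{T}_Y)$ is an algebraic space locally of finite type over $\CC$ by \cite{I1}, belonging to $\cat{T}_Y$ is an open condition in $\Db(Y)$, and $\sigma$-stability with fixed numerical class is an open condition in families (the support property holding automatically here, as recalled above); hence $\MMM_\sigma(\mathbf{v}_0,\phi_0)$ is an open subspace of $\MMM^{\mathrm{spl}}(\cat{T}_Y)$, with Zariski tangent space $\Ext^1(E,E)$ at $[E]$. Since $\mathbf{v}_0$ is primitive and $\Lambda$ has rank two, a $\sigma$-semistable $E$ with Mukai vector $\mathbf{v}_0$ is $\sigma$-stable, hence simple, so $\hom(E,E)=1$; by the axioms of a stability condition, $\hom^{<0}(E,E)=0$; and since the Serre functor of $\cat{T}_Y$ is $[2]$, Serre duality gives $\hom^2(E,E)=\hom^0(E,E)=1$ and $\hom^{>2}(E,E)=0$. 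From $\mathbf{v}_0^2=(\mathbf{v}_0,\mathbf{v}_0)=-\chi(E,E)=\hom^1(E,E)-\hom^0(E,E)-\hom^2(E,E)$ one obtains $\dim_\CC\Ext^1(E,E)=\mathbf{v}_0^2+2$, and therefore $\dim_{[E]}\MMM^{\mathrm{spl}}(\cat{T}_Y)\leq \mathbf{v}_0^2+2$ at every point of $\MMM_\sigma(\mathbf{v}_0,\phi_0)$.

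Next I would run the dimension count. Let $\bar Z\subseteq\MMM_\sigma(\mathbf{v}_0,\phi_0)$ be the closure of $\iota(M)$ with its reduced structure; as $\iota$ is injective on $\CC$-points and $M$ is integral of dimension $\mathbf{v}_0^2+2$, the closed subspace $\bar Z$ is integral of dimension $\mathbf{v}_0^2+2$. For every $[E]\in\bar Z$ we have $\dim_{[E]}\MMM^{\mathrm{spl}}(\cat{T}_Y)\geq\dim_{[E]}\bar Z=\mathbf{v}_0^2+2$, which together with the tangent bound forces $\dim_{[E]}\MMM^{\mathrm{spl}}(\cat{T}_Y)=\mathbf{v}_0^2+2=\dim_\CC T_{[E]}\MMM^{\mathrm{spl}}(\cat{T}_Y)$, i.e.\ $\MMM^{\mathrm{spl}}(\cat{T}_Y)$ is smooth of dimension $\mathbf{v}_0^2+2$ at $[E]$. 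Being smooth, $\MMM^{\mathrm{spl}}(\cat{T}_Y)$ is locally irreducible along $\bar Z$, so the closed irreducible subspace $\bar Z$ of the same local dimension is also open; in particular $\iota(M)=\bar Z$. Then $\iota\colon M\to\bar Z$ is dominant, generically injective (hence birational, since we are in characteristic zero), proper and quasi-finite --- hence finite --- onto the smooth, in particular normal, space $\bar Z$, so Zariski's Main Theorem makes it an isomorphism. Finally $\bar Z$ is open and closed in $\MMM^{\mathrm{spl}}(\cat{T}_Y)$ and lies in the open subspace $\MMM_\sigma(\mathbf{v}_0,\phi_0)$, hence open and closed in $\MMM_\sigma(\mathbf{v}_0,\phi_0)$; since the latter is connected, $M\cong\bar Z=\MMM_\sigma(\mathbf{v}_0,\phi_0)$.

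I expect the delicate points to be bookkeeping at the level of algebraic spaces --- applying ``the image of a proper variety is closed'' and Zariski's Main Theorem to $\MMM^{\mathrm{spl}}(\cat{T}_Y)$, which is only locally of finite type and not a priori separated, requires restricting to a suitable (quasi-)separated open or passing to an étale chart --- together with the connectedness of $\MMM_\sigma(\mathbf{v}_0,\phi_0)$, which is the one input not produced by the argument itself and which holds in the situations where the statement is used. Alternatively, the smoothness of $\MMM^{\mathrm{spl}}(\cat{T}_Y)$ along $\MMM_\sigma(\mathbf{v}_0,\phi_0)$ can be deduced from the $2$-Calabi--Yau property of $\cat{T}_Y$, the obstruction space $\Ext^2(E,E)\cong\CC$ carrying no obstruction, exactly as for moduli of objects on a K3 surface, after which the rest of the argument proceeds unchanged.
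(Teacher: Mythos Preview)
The paper does not give its own proof of this proposition: it is quoted verbatim as \cite[Proposition~A.7]{BLMS} and used as a black box, so there is nothing in the present paper to compare your argument against.

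That said, a remark on your write-up. Your deformation-theoretic core is exactly right: for $\sigma$-stable $E$ with $\mathbf{v}(E)=\mathbf{v}_0$ one has $\hom(E,E)=\hom^2(E,E)=1$ and $\hom^i(E,E)=0$ for $i\neq 0,1,2$ by simplicity and the $2$-Calabi--Yau property of $\cat{T}_Y$, whence $\dim\Ext^1(E,E)=\mathbf{v}_0^2+2$, and the usual K3-type argument (the unique obstruction in $\Ext^2$ pairs trivially with the identity) makes $\MMM_\sigma(\mathbf{v}_0,\phi_0)$ smooth of this dimension; then the dimension hypothesis on $M$ forces $\iota$ to be an open immersion. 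The genuine gap is your last sentence: you conclude by asserting that $\MMM_\sigma(\mathbf{v}_0,\phi_0)$ is connected, but this is precisely part of what the proposition claims and is not among its hypotheses. You flag this honestly, but as written your argument only shows that $M$ is a connected component of $\MMM_\sigma(\mathbf{v}_0,\phi_0)$, not all of it. If you want a self-contained proof you will need to supply this step (or consult the cited reference for how it is handled there); the separatedness bookkeeping you mention is a secondary issue by comparison.
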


Note that there exists a quasi-universal family on $M$ which is actually contained in the semiorthogonal component $T_{Y \times M}$, where we use the notation as in \eqref{eqn:quasi-universal}. 
We will apply Proposition~\ref{prop:MukaiConnectedness} when $M$ is the IHS eightfold $Z(Y)$, and so we will still use the existence of $Z(Y)$ and its projectivity (namely, the main result in \cite{LLSvS}).
Having good properties for $\MMM_{\sigma}(\mathbf{v}_0,\phi_0)$, as we have in the case of K3 surfaces by \cite{BM:projectivity}, would imply the existence of $Z(Y)$ without assuming \cite{LLSvS}.

\subsection{Stability of special objects}\label{subsec:BridgelandStableObjects}
Let $Y$ be a very general cubic fourfold. 
Let $F_\Gm'\in\cat{T}_Y$ be the objects defined in Proposition~\ref{prop:tiltstabilityFCafterwall}.
To prove Bridgeland stability of $F_\Gm'$ we use a very similar argument as in \cite[Section~A.1]{BLMS}.
As in \emph{loc.~cit.}, the key ingredient is the following observation (see \cite{Mukai:K3} and \cite[Lemma~2.5]{BayerBridgeland:K3}).

\begin{lem}[Mukai]\label{lem:Mukai}
Let $A\to E \to B$ be an exact triangle in $\cat{T}_Y$.
Assume that $\Hom(A,B)=0$.
Then
\[
\dim \Ext^1(A,A) + \dim \Ext^1(B,B) \leq \dim \Ext^1(E,E).
\]
\end{lem}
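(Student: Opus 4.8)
The plan is to run the long exact sequence in $\Ext^{\bullet}$ obtained by applying $\Hom(-,A)$, $\Hom(-,B)$, and $\Hom(-,E)$ to the triangle $A\to E\to B$, and to count dimensions using only the hypothesis $\Hom(A,B)=0$ together with the fact that $\cat{T}_Y$ is a K3 category (Serre functor $=[2]$). First I would record the three long exact sequences coming from the triangle $A\to E\to B\xrightarrow{+1}$: applying $\Hom(A,-)$ gives $\dots\to\Hom(A,A)\to\Hom(A,E)\to\Hom(A,B)\to\Ext^1(A,A)\to\Ext^1(A,E)\to\dots$, and similarly applying $\Hom(-,B)$ and $\Hom(-,E)$. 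Since $\Hom(A,B)=0$, Serre duality in the K3 category $\cat{T}_Y$ gives $\Ext^2(B,A)\cong\Hom(A,B)^{\vee}=0$ as well, so the relevant connecting maps degenerate at both ends.

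The key computation is to estimate $\dim\Ext^1(E,E)$ from below. I would apply $\Hom(-,E)$ to the triangle to get the exact sequence
\[
\Ext^1(B,E)\to\Ext^1(E,E)\to\Ext^1(A,E),
\]
and then bound each outer term below by analyzing $\Ext^1(B,E)$ via the triangle $A\to E\to B$ (applying $\Hom(B,-)$) and $\Ext^1(A,E)$ via applying $\Hom(A,-)$. Concretely, from $\Hom(B,-)$ applied to $A\to E\to B$ one gets a surjection (using $\Ext^1(B,A)$ on one side and $\Hom(B,B)\to\Ext^1(B,A)$ — but the cleanest route is the combined four-term argument) controlling $\Ext^1(B,E)$ in terms of $\Ext^1(B,B)$; symmetrically, from $\Hom(A,-)$, the term $\Ext^1(A,E)$ surjects onto $\Ext^1(A,A)$ modulo the image of $\Hom(A,B)=0$. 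Tracking these maps carefully, one assembles the inequality $\dim\Ext^1(A,A)+\dim\Ext^1(B,B)\leq\dim\Ext^1(E,E)$; the vanishing $\Hom(A,B)=0$ (and hence, by K3 Serre duality, $\Ext^2(B,A)=0$) is exactly what is needed to kill the error terms that would otherwise appear, so that the alternating sums collapse to the stated inequality.

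The main obstacle is purely bookkeeping: one must chase three interlocking long exact sequences and make sure the dimension estimates are in the right direction at each splice, in particular using Serre duality $\Ext^i(X,Y)\cong\Ext^{2-i}(Y,X)^{\vee}$ inside $\cat{T}_Y$ to convert the hypothesis $\Hom(A,B)=0$ into the companion vanishing $\Ext^2(B,A)=0$. No deformation theory or geometry is needed beyond the K3 property of $\cat{T}_Y$; the statement is formal once the exact sequences are laid out, and indeed this is why the lemma is attributed to Mukai and can be quoted in the stated generality. I expect the write-up to be short, with the only subtlety being the correct handling of the connecting homomorphisms so that the final inequality, rather than its reverse, comes out.
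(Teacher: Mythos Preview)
The paper does not give a proof of this lemma; it simply attributes the result to Mukai and cites \cite{Mukai:K3} and \cite[Lemma~2.5]{BayerBridgeland:K3}. Your outline is the standard argument one finds in those references and is correct in substance: apply $\Hom(-,E)$ to the triangle to obtain the exact piece $\Ext^1(B,E)\to\Ext^1(E,E)\to\Ext^1(A,E)$, then control the two outer terms via $\Hom(A,-)$ and $\Hom(B,-)$, using only $\Hom(A,B)=0$ and its Serre-dual $\Ext^2(B,A)=0$.

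One small slip worth flagging: from $\Hom(A,-)$ applied to the triangle you get an \emph{injection} $\Ext^1(A,A)\hookrightarrow\Ext^1(A,E)$ (since $\Hom(A,B)=0$), not a surjection in the other direction. The actual point you need is that this copy of $\Ext^1(A,A)$ sits inside the kernel of the connecting map $\Ext^1(A,E)\to\Ext^2(B,E)$, i.e.\ inside the image of $\Ext^1(E,E)\to\Ext^1(A,E)$; this follows because the composite $\Ext^1(A,A)\hookrightarrow\Ext^1(A,E)\to\Ext^2(B,E)$ factors through $\Ext^2(B,A)=0$. Dually, the image of $\Ext^0(A,E)\to\Ext^1(B,E)$ lands in the kernel of the surjection $\Ext^1(B,E)\twoheadrightarrow\Ext^1(B,B)$ because the composite factors through $\Hom(A,B)=0$. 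With these two observations the exact sequence gives
\[
\dim\Ext^1(E,E)=\dim\operatorname{im}\bigl(\Ext^1(B,E)\to\Ext^1(E,E)\bigr)+\dim\operatorname{im}\bigl(\Ext^1(E,E)\to\Ext^1(A,E)\bigr)\geq \dim\Ext^1(B,B)+\dim\Ext^1(A,A),
\]
exactly as you predicted. So your plan is right; just be careful with the direction of that one map when you write it up.
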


The above lemma is used in the proof of the following results.
In Proposition~\ref{prop:NoSphericalNorSemirigid} though, we directly refer to \cite[Appendix~A]{BLMS}.

\begin{prop}\label{prop:NoSphericalNorSemirigid}
Let $Y$ be a very general cubic fourfold.
Then there exists no nonzero object $E\in\cat{T}_Y$ with $\Ext^1(E,E)=0$ or $\Ext^1(E,E)\cong\CC^2$.
\end{prop}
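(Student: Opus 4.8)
The plan is to prove the stronger statement that $\dim\Ext^1(E,E)\geq 4$ for \emph{every} nonzero $E\in\cat{T}_Y$, which of course rules out both $\dim\Ext^1(E,E)=0$ and $\dim\Ext^1(E,E)=2$. The argument is the one in \cite[Appendix~A]{BLMS}, and its whole force comes from the fact that $\Lambda$ is the positive definite lattice with Gram matrix $\left(\begin{smallmatrix}2&-1\\-1&2\end{smallmatrix}\right)$, so that every nonzero $\mathbf{w}\in\Lambda$ has $(\mathbf{w},\mathbf{w})\geq 2$. First I would fix a stability condition $\sigma=(Z,\P)\in\Stab(\cat{T}_Y)$ (non-empty by \cite[Theorem~1.2]{BLMS}) and let $\A\subset\cat{T}_Y$ be its heart. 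By additivity of $Z$ and Definition~\ref{def:stability}(a), $Z$ restricts to a stability function on $\A$, so $Z(A)\neq 0$, and hence $\mathbf{v}(A)\neq 0$ in $\Lambda$, for every nonzero $A\in\A$.

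Next I would reduce to objects of $\A$. Given a nonzero $E\in\cat{T}_Y$, let $p$ be the top degree of its cohomology with respect to the $t$-structure with heart $\A$, so that the truncation triangle reads $E''\to E\to H^p_\A(E)[-p]$, where $E'':=\tau^{\leq p-1}_\A E$ has cohomology concentrated in degrees $\leq p-1$ and $H^p_\A(E)\neq 0$. For $t$-structure reasons $\Hom\bigl(E'',H^p_\A(E)[-p]\bigr)=0$, so Lemma~\ref{lem:Mukai} applies and gives
\[
\dim\Ext^1(E,E)\;\geq\;\dim\Ext^1(E'',E'')+\dim\Ext^1\bigl(H^p_\A(E),H^p_\A(E)\bigr)\;\geq\;\dim\Ext^1\bigl(H^p_\A(E),H^p_\A(E)\bigr).
\]
Hence it suffices to bound $\dim\Ext^1(A,A)$ from below for a nonzero $A\in\A$. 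For such an $A$ one has $\Ext^{<0}(A,A)=0$, because $\A$ is the heart of a bounded $t$-structure, and since the Serre functor of $\cat{T}_Y$ is $[2]$, Serre duality gives $\Ext^{>2}(A,A)=0$ together with $\dim\Ext^2(A,A)=\dim\Hom(A,A)$. Therefore
\[
\chi(A,A)=2\dim\Hom(A,A)-\dim\Ext^1(A,A),
\]
and, using $(\mathbf{v}(A),\mathbf{v}(A))=-\chi(A,A)$ and $\mathbf{v}(A)\neq 0$,
\[
\dim\Ext^1(A,A)=2\dim\Hom(A,A)+(\mathbf{v}(A),\mathbf{v}(A))\;\geq\;2\cdot 1+2\;=\;4.
\]

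All the computations here are routine; the only input that really uses "very general" is that $\Lambda$ is the positive definite lattice $\left(\begin{smallmatrix}2&-1\\-1&2\end{smallmatrix}\right)$, which contains no vector of square $0$ or $-2$ besides the origin — this is precisely what yields $(\mathbf{v}(A),\mathbf{v}(A))\geq 2$ and thereby excludes spherical and semirigid objects. The one place where a little care is needed is the reduction to the heart: one must check that the outer terms of the truncation triangle have no morphisms between them, so that Mukai's Lemma can be invoked, and that $\mathbf{v}(A)\neq 0$ for every nonzero $A\in\A$; both are immediate from, respectively, the $t$-structure axioms and the fact that $Z$ factors through $\Lambda$ and is a stability function on $\A$.
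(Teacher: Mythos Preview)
Your argument is correct and is essentially the same as the paper's: the paper simply cites \cite[Lemmas~A.2 and A.3]{BLMS}, whereas you have unpacked that appendix into a self-contained proof of the stronger bound $\dim\Ext^1(E,E)\geq 4$. The reduction to the heart via Lemma~\ref{lem:Mukai}, together with the observation that $Z$ being a genuine stability function forces $\mathbf{v}(A)\neq 0$ for nonzero $A\in\A$, and the positive-definiteness (and evenness) of $\Lambda$ giving $(\mathbf{v}(A),\mathbf{v}(A))\geq 2$, is exactly the mechanism behind those lemmas.
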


\begin{proof}
The fact that there exist no objects with $\Ext^1(E,E)=0$ is \cite[Lemma~A.2]{BLMS}.
For objects with $\Ext^1(E,E)\cong\CC^2$, we can use \cite[Lemma~A.3]{BLMS}: if such an object $E$ exists, then $\mathbf{v}(E)^2=0$, which is impossible in the lattice $\Lambda$, unless $\mathbf{v}(E)=0$.
\end{proof}

As a consequence of Proposition~\ref{prop:NoSphericalNorSemirigid}, we obtain stability of all objects with $\Ext^1\cong\CC^4$.

\begin{prop}\label{prop:StabilityFano}
Let $Y$ be a very general cubic fourfold and let $E\in\cat{T}_Y$ be an object with $\Ext^1(E,E)\cong\CC^4$.
Then, for all $\sigma\in\mathrm{Stab}(\cat{T}_Y)$, $E$ is $\sigma$-stable.
In particular, $\mathbf{v}(E)^2=2$.
\end{prop}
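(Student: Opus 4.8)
The plan is to deduce stability of $E$ by a contradiction argument using the Harder--Narasimhan and Jordan--H\"older filtrations together with Lemma~\ref{lem:Mukai} and Proposition~\ref{prop:NoSphericalNorSemirigid}. Suppose $E$ is not $\sigma$-stable for some $\sigma=(Z,\P)\in\Stab(\cat{T}_Y)$. Since the support property holds automatically (the lattice has rank two), $E$ has a finite Jordan--H\"older filtration if it is semistable, and a Harder--Narasimhan filtration otherwise. In either case, after passing to a suitable subquotient, we may produce an exact triangle $A\to E\to B$ in $\cat{T}_Y$ with $A,B$ nonzero and $\Hom(A,B)=0$: if $E$ is semistable, take $A$ a stable subobject of maximal phase and $B$ the quotient, noting $\Hom(A,B)=0$ because $A$ is stable of phase $\phi$ and $B$ has all HN-factors of phase $\leq\phi$, with the $=\phi$ part giving no maps by stability after choosing $A$ to be a direct summand if necessary; if $E$ is not semistable, take $A$ the maximal-phase HN-piece and $B$ the rest, so $\Hom(A,B)=0$ by property (c) of Definition~\ref{def:stability}.

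The key numerical step is then Lemma~\ref{lem:Mukai}, which gives
\[
\dim\Ext^1(A,A)+\dim\Ext^1(B,B)\leq\dim\Ext^1(E,E)=4.
\]
By Proposition~\ref{prop:NoSphericalNorSemirigid}, every nonzero object of $\cat{T}_Y$ has $\dim\Ext^1\geq 1$ and $\dim\Ext^1\neq 2$, so $\dim\Ext^1\in\{1,3,4,\dots\}$. Hence the only way the inequality above can hold with $A,B$ nonzero is $\dim\Ext^1(A,A)=\dim\Ext^1(B,B)=1$; but then one of $A$, $B$ would itself need $\dim\Ext^1$ at least $3$ for the argument to recurse, so we iterate: decompose $E$ as an iterated extension of stable factors $E_1,\dots,E_k$ (the stable factors appearing in the HN/JH filtration), apply Lemma~\ref{lem:Mukai} repeatedly to the successive triangles, and conclude $\sum_i\dim\Ext^1(E_i,E_i)\leq 4$. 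Since each term is $\geq 1$ and $\neq 2$, the only partitions of a number $\leq 4$ into parts from $\{1,3,4,\dots\}$ with $k\geq 2$ parts are $1+1$, $1+3$, $1+1+1$, $1+1+1+1$. In the cases with a part equal to $3$ the complementary factor has $\dim\Ext^1=1$; in all the $1+\dots$ cases there is at least one stable factor $F$ with $\dim\Ext^1(F,F)=1$. This gives the desired contradiction as soon as we know such factors cannot exist, but $\dim\Ext^1(F,F)=1$ is allowed a priori, so we must rule it out by a finer analysis.

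To eliminate stable factors with $\dim\Ext^1=1$: by Proposition~\ref{prop:StabilityFano}'s own proof philosophy, an object $F$ with $\chi(F,F)=\dim\Hom-\dim\Ext^1+\dim\Ext^2=1-1+1=0$ wait --- for stable $F$ one has $\dim\Hom(F,F)=1$ and $\dim\Ext^2(F,F)=\dim\Hom(F,F)=1$ by Serre duality in the K3 category, so $\mathbf{v}(F)^2=-\chi(F,F)=-(1-1+1)=-1$. But the Mukai pairing on $\Lambda$ has intersection matrix $\left(\begin{smallmatrix}2&-1\\-1&2\end{smallmatrix}\right)$, which is even, so $\mathbf{v}(F)^2$ is always even; $-1$ is odd, a contradiction (and $\mathbf{v}(F)=0$ is impossible for $F\neq 0$ since $\chi$ would then be $0\neq\mathbf{v}(F)^2$-forced). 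Hence no stable object has $\dim\Ext^1=1$; combined with the partition analysis above, $E$ admits no nontrivial JH/HN decomposition, so $E$ is $\sigma$-stable. Finally, for the last assertion: $E$ stable implies $\dim\Hom(E,E)=\dim\Ext^2(E,E)=1$ by Serre duality, hence $\mathbf{v}(E)^2=-\chi(E,E)=-(1-4+1)=2$. \textbf{The main obstacle} is organizing the inductive application of Lemma~\ref{lem:Mukai} cleanly — ensuring at each stage that the triangle $A\to E\to B$ one extracts really has $\Hom(A,B)=0$ (which requires care in the semistable-but-not-stable case when several JH factors share the same phase) — and the parity argument on $\mathbf{v}(F)^2$ which is what ultimately forbids the $\dim\Ext^1=1$ factors.
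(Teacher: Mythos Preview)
Your approach uses the same ingredients as the paper (Mukai's lemma, Proposition~\ref{prop:NoSphericalNorSemirigid}, and evenness of the Mukai pairing), but there is a genuine gap that the paper handles differently.

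The problem is exactly the one you flag at the end but do not resolve: when $E$ is $\sigma$-semistable with a \emph{single} stable Jordan--H\"older factor $E_0$ occurring with multiplicity $\geq 2$, you cannot produce a triangle $A\to E\to B$ with $\Hom(A,B)=0$. (For instance, if $E$ is a nonsplit self-extension of $E_0$, every nonzero subobject and quotient has $E_0$ as a factor, and $\Hom(E_0,E_0)\cong\CC$.) Your phrase ``choosing $A$ to be a direct summand if necessary'' does not help here, and without this vanishing the iterated Mukai inequality $\sum_i\dim\Ext^1(E_i,E_i)\leq 4$ over stable factors is simply false in general.

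The paper's proof isolates precisely this case. Using Mukai's lemma and Proposition~\ref{prop:NoSphericalNorSemirigid} one reduces to: $E$ is $\sigma$-semistable with a unique stable factor $E_0$. If $E\neq E_0$, then the composite $E\twoheadrightarrow E_0\hookrightarrow E$ is a nonzero non-invertible endomorphism, so $d:=\dim\Hom(E,E)\geq 2$. Since $E$ lies in the heart $\P((\phi-1,\phi])$, one has $\Ext^{<0}(E,E)=0$ and, by Serre duality in the K3 category, $\Ext^{>2}(E,E)=0$ and $\dim\Ext^2(E,E)=d$. Hence $\mathbf{v}(E)^2=-\chi(E,E)=4-2d\leq 0$. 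But the form on $\Lambda$ is positive definite, so $\mathbf{v}(E)=0$, contradicting $Z(E)\neq 0$ for a nonzero semistable object. This two-line computation is what replaces your missing step.

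One further remark: your parity observation is stronger than you use it. For \emph{any} object $F$ in a heart one has $\dim\Ext^1(F,F)=\mathbf{v}(F)^2+2\dim\Hom(F,F)$, which is even since $\Lambda$ is an even lattice. Combined with Proposition~\ref{prop:NoSphericalNorSemirigid} this gives $\dim\Ext^1(F,F)\geq 4$ directly, making the partition analysis unnecessary once the repeated-factor case is treated as above.
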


\begin{proof}
Let $E\in\cat{T}_Y$ be an object as in the statement and let $\sigma\in\mathrm{Stab}(\cat{T}_Y)$.
By Lemma~\ref{lem:Mukai} and Proposition~\ref{prop:NoSphericalNorSemirigid}, we can assume that $E$ is $\sigma$-semistable and that it has a unique $\sigma$-stable factor $E_0$.
If $E\neq E_0$, then $d:=\dim\Hom(E,E)\geq 2$. 
Therefore, $\mathbf{v}(E)^2= 4-2d\leq 0$, which is impossible in the lattice $\Lambda$.
\end{proof}

Finally, we can study stability of the objects $F_C'$:

\begin{prop}\label{prop:StabilityFCprime}
Let $Y$ be a very general cubic fourfold and let $E\in\cat{T}_Y$ be an object with $\Ext^{<0}(E,E)=0$, $\Hom(E,E)\cong\CC$, and $\Ext^1(E,E)\cong\CC^8$.
If $E$ is not $\sigma$-stable for some $\sigma\in\mathrm{Stab}(\cat{T}_Y)$, then its HN-filtration is of the form
\[
A \to E \to B,
\]
where $A$ and $B$ are $\sigma$-stable with $\mathbf{v}(A)^2=\mathbf{v}(B)^2=2$ and $(\mathbf{v}(A),\mathbf{v}(B))=1$.
\end{prop}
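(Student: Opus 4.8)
The plan is to run the Mukai-style argument, exactly as in the proofs of Propositions~\ref{prop:StabilityFano} and \ref{prop:NoSphericalNorSemirigid}, but one step further up the ladder of values of $\dim\Ext^1$. Suppose $E$ is not $\sigma$-stable. Since $\Ext^{<0}(E,E)=0$ and $\Hom(E,E)\cong\CC$, the object $E$ is simple, hence in particular $\sigma$-semistable would force all its Jordan--H\"older factors to have the same phase; so the first thing to establish is that $E$ is actually $\sigma$-\emph{semistable}. If it were not, its Harder--Narasimhan filtration would produce stable factors of strictly decreasing phases, and chopping the filtration into the top piece and the rest gives a triangle $A\to E\to B$ with $\Hom(A,B)=0$ (by the phase inequality and Definition~\ref{def:stability}(c)); Lemma~\ref{lem:Mukai} then gives $\dim\Ext^1(A,A)+\dim\Ext^1(B,B)\le 8$. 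Using Proposition~\ref{prop:NoSphericalNorSemirigid} we have $\dim\Ext^1(-,-)\ge 4$ for any nonzero object whose $\Ext^1$ is not $\CC$ or $\CC^2$; but an object with $\Ext^1\cong\CC$ is impossible in the lattice $\Lambda$ since $\mathbf{v}(-)^2$ would be odd, and actually $\mathbf v(\cdot)^2=2-2\dim\Ext^1\le0$ whenever $\Hom\cong\CC$ — so in fact the only values available are $\dim\Ext^1\in\{4,8,\dots\}$ for stable factors, hence each summand is $\ge 4$ and the inequality forces $\dim\Ext^1(A,A)=\dim\Ext^1(B,B)=4$, with both $A,B$ (semi)stable. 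One then iterates: apply the same reasoning inside $A$ and inside $B$ to conclude they are in fact $\sigma$-stable (an object with $\dim\Ext^1=4$ is $\sigma$-stable by Proposition~\ref{prop:StabilityFano}).

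So we may assume $E$ is $\sigma$-semistable. Let $E_0$ be a $\sigma$-stable JH factor. By Proposition~\ref{prop:StabilityFano}, if every JH factor had $\dim\Ext^1=4$ we could still have $E$ built of two such factors $A,B$ of equal phase; this is precisely the configuration in the statement. The numerics are then forced: write $\mathbf v(A)=\mathbf a$, $\mathbf v(B)=\mathbf b$ in $\Lambda$, with $\mathbf a^2=\mathbf b^2=2$ by Proposition~\ref{prop:StabilityFano}. Since $\mathbf v(E)=\mathbf a+\mathbf b$ and $\mathbf v(E)^2 = 2-2\dim\Ext^1(E,E)+2\dim\Hom(E,E)=2-16+2=-12$, expanding gives $\mathbf a^2+2(\mathbf a,\mathbf b)+\mathbf b^2=-12$, hence $(\mathbf a,\mathbf b)=-8$. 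Wait — this needs the correct sign bookkeeping for the Mukai pairing $(-,-)=-\chi(-,-)$; with the convention in the excerpt, $\dim\Hom(E,E)-\dim\Ext^1(E,E)+\dim\Ext^2(E,E)=\chi(E,E)=-\mathbf v(E)^2=-(\mathbf a+\mathbf b)^2$, and since $E\in\cat T_Y$ is a K3 category, $\Ext^2(E,E)\cong\Hom(E,E)^\vee\cong\CC$, giving $1-8+1=-(\mathbf a+\mathbf b)^2$, i.e. $(\mathbf a+\mathbf b)^2=6$, hence $2+2(\mathbf a,\mathbf b)+2=6$ and $(\mathbf a,\mathbf b)=1$. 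This is exactly the claimed $(\mathbf v(A),\mathbf v(B))=1$. It remains to rule out the possibility of more than two JH factors, or of a single repeated factor: if $E$ had $\ge 3$ stable factors each with $\mathbf v(\cdot)^2\ge 2$ and of equal phase, then $\dim\Hom(E,E)\ge 3>1$ (each inclusion of a distinct factor, or each idempotent of a repeated factor, contributes), contradicting $\Hom(E,E)\cong\CC$; similarly if $E=E_0^{\oplus k}$ or a self-extension with $k\ge 2$ stable factors all isomorphic to $E_0$, then $\dim\Hom(E,E)\ge k\ge 2$ (again a contradiction), and if the two factors $A,B$ satisfy $A\cong B$ then $\Hom(E,E)$ contains a $2\times2$ matrix algebra's worth of maps, contradiction — so $A\not\cong B$ and there are exactly two of them, with $B\to E$ and $E\to A$ or the other order, i.e. the HN/JH filtration $A\to E\to B$ of the statement (the labelling $A,B$ being a matter of convention).

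The main obstacle I anticipate is the bookkeeping in the second paragraph: getting the Mukai pairing signs right, and — more delicately — \emph{excluding the case where $E$ is $\sigma$-semistable with all JH factors of $\dim\Ext^1=4$ but the extension class is such that $E$ is not of the two-factor form}, e.g. a length-$\ge3$ filtration that still happens to be simple-looking, or a factor of $\mathbf v(\cdot)^2=4$ (the next rigid-type value) appearing. Ruling these out cleanly requires combining the constraint $\Hom(E,E)\cong\CC$ (which bounds the number of distinct factors and kills repeated ones) with the lattice constraint in $\Lambda$: because $\Lambda$ has rank two and intersection form $\left(\begin{smallmatrix}2&-1\\-1&2\end{smallmatrix}\right)$, the only classes of square $2$ are $\pm\llambda_1,\pm\llambda_2,\pm(\llambda_1+\llambda_2)$ up to the finite symmetry group, and one checks that $\mathbf a+\mathbf b$ having square $6$ with $\mathbf a^2=\mathbf b^2=2$ pins $(\mathbf a,\mathbf b)=1$ and hence $\{\mathbf a,\mathbf b\}$ to a pair of non-proportional square-$2$ classes; any hypothetical third factor would make $\mathbf v(E)^2$ come out wrong or violate $\Hom(E,E)\cong\CC$. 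Once these low-rank lattice checks are in place, the proof is a direct unwinding of Lemma~\ref{lem:Mukai} plus Propositions~\ref{prop:NoSphericalNorSemirigid} and \ref{prop:StabilityFano}.
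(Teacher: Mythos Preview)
Your core argument is correct and matches the paper's proof: take a triangle $A\to E\to B$ with $\Hom(A,B)=0$, apply Lemma~\ref{lem:Mukai} to get $\dim\Ext^1(A,A)+\dim\Ext^1(B,B)\le 8$, use Proposition~\ref{prop:NoSphericalNorSemirigid} together with the parity of $\dim\Ext^1$ (forced by Serre duality in the K3 category and evenness of the Mukai form on $\Lambda$) to conclude each is exactly $4$, then invoke Proposition~\ref{prop:StabilityFano} to see $A$ and $B$ are stable, and finish with $\mathbf v(E)^2=6$ to get $(\mathbf v(A),\mathbf v(B))=1$.

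That said, your write-up takes several unnecessary detours. You do not need to first prove $E$ is $\sigma$-semistable and then separately analyze the semistable case: the paper simply notes that since $\Hom(E,E)\cong\CC$, $E$ cannot have a unique stable factor, and then directly produces one triangle $A\to E\to B$ with $\Hom(A,B)=0$ covering both the non-semistable and semistable-but-not-stable cases. Once Mukai's lemma forces $\dim\Ext^1(A,A)=\dim\Ext^1(B,B)=4$, Proposition~\ref{prop:StabilityFano} immediately gives that $A$ and $B$ are stable, so this \emph{is} the HN-filtration and there is nothing further to rule out. Your separate argument excluding $\ge 3$ factors (``each inclusion of a distinct factor \dots\ contributes to $\Hom(E,E)$'') is actually incorrect as stated --- a non-split iterated extension of three pairwise non-isomorphic stable objects of the same phase can perfectly well have $\Hom(E,E)\cong\CC$ --- but fortunately this step is redundant. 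Also watch the formula $\mathbf v(\cdot)^2=2-2\dim\Ext^1$: the correct relation (for an object with $\Ext^{<0}=0$) is $\mathbf v(F)^2=\dim\Ext^1(F,F)-2\dim\Hom(F,F)$, which you use correctly when computing $\mathbf v(E)^2=6$ but misstate earlier.
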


\begin{proof}
Assume that $E$ is not $\sigma$-stable.
Since $\Hom(E,E)\cong\CC$, $E$ cannot have a unique $\sigma$-stable factor.
Hence, we have a non-trivial exact triangle $A\to E \to B$ with $\Hom(A,B)=0$, $B\in \P(\phi)$, and such that the HN-factors of $A$ have phases $\geq \phi$.
By Lemma~\ref{lem:Mukai} and Proposition~\ref{prop:NoSphericalNorSemirigid}, we must have $$\Ext^1(A,A)\cong\Ext^1(B,B)\cong\CC^4.$$
By Proposition~\ref{prop:StabilityFano}, this implies that both $A$ and $B$ are $\sigma$-stable, and so it is exactly the HN-filtration of $E$.
Moreover, $\mathbf{v}(A)^2=\mathbf{v}(B)^2=2$.
Since by assumption $\mathbf{v}(E)^2=6$, this gives $(\mathbf{v}(A),\mathbf{v}(B))=1$.
\end{proof}

\subsection{Proof of Part~\eqref{item:main2c} of the Main Theorem}\label{subsec:ProofOfPart2c}

Let $Y$ be a very general cubic fourfold, and let $\overline{\sigma}=(\overline{Z},\overline{\P})$ be any of the Bridgeland stability conditions on $\cat{T}_Y$ constructed in \cite[Section~9]{BLMS}.
Part~\eqref{item:main2c} of the Main Theorem follows immediately from the following result.

\begin{prop}\label{prop:BridgelandStabilityFC'}
For any generalized twisted cubic curve $\Gm$, the object $F_{\Gm}'$ is $\overline{\sigma}$-stable.
\end{prop}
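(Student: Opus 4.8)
The plan is to separate the argument into the aCM case, which is handled purely via previously established numerical identities in $\cat{T}_Y$, and the non-CM case, which requires a direct analysis of the only possible destabilizing triangle. First I would recall that $F'_\Gm=\mathop{\mathrm{pr}}(F_\Gm)\in\cat{T}_Y$ for every generalized twisted cubic $\Gm$, and that $F'_\Gm=F_\Gm$ when $\Gm$ is aCM. For the aCM case, I would invoke Proposition~\ref{prop:StabilityFCprime}: by the computation $\dim\Hom(F'_\Gm,F'_\Gm[i])=1,8,1$ for $i=0,1,2$ (already established at the end of Section~\ref{subsec:birtransf}, together with $\Ext^{<0}(F'_\Gm,F'_\Gm)=0$ since $F'_\Gm$ is a sheaf in the heart), the object $F'_\Gm$ satisfies exactly the hypotheses of that proposition with $\mathbf{v}(F'_\Gm)^2=6$. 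So if $F'_\Gm$ were not $\overline{\sigma}$-stable, its HN-filtration would be a triangle $A\to F'_\Gm\to B$ with $A,B$ $\overline{\sigma}$-stable, $\mathbf{v}(A)^2=\mathbf{v}(B)^2=2$, and $(\mathbf{v}(A),\mathbf{v}(B))=1$. The task is then to exclude this.

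The key step is to identify which Mukai vectors of square $2$ can occur. In the rank-two lattice $\Lambda$ with Gram matrix $\left(\begin{smallmatrix}2&-1\\-1&2\end{smallmatrix}\right)$ relative to $\llambda_1,\llambda_2$, one checks directly that the only classes $\mathbf{u}$ with $\mathbf{u}^2=2$ are $\pm\llambda_1,\pm\llambda_2,\pm(\llambda_1+\llambda_2)$ (a finite check since the form is positive definite: writing $\mathbf{u}=x\llambda_1+y\llambda_2$, the equation $2x^2-2xy+2y^2=2$ has exactly these six integer solutions). Given $\mathbf{v}(F'_\Gm)=\mathbf{v}(F_\Gm)=2\llambda_1+\ldots$ — more precisely I would compute $\mathbf{v}(F'_\Gm)$ in terms of $\llambda_1,\llambda_2$ via the projection formula applied to $\ch(F_\Gm)=\mathbf{v}_2$, and confirm $\mathbf{v}(F'_\Gm)^2=6$ — the splittings $\mathbf{v}(A)+\mathbf{v}(B)=\mathbf{v}(F'_\Gm)$ with both squares $2$ and cross term $1$ pin down $\{\mathbf{v}(A),\mathbf{v}(B)\}$ up to finitely many possibilities (essentially $\{\llambda_1,\llambda_1+2\llambda_2\}$-type pairs, but only those of square $2$). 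The crucial input, as in \cite[Appendix~A]{BLMS}, is the ordering of phases $\overline{\phi}(\mathop{\mathrm{pr}}\O_L(H))$ versus $\overline{\phi}(\mathop{\mathrm{pr}}\O_L(2H))$ recorded in \eqref{eqn:phasesord}: this forces that a stable object of class $\llambda_1$ (resp. $\llambda_2$) has a definite phase, and the semistability of $F'_\Gm$ at a wall, combined with $\Hom(A,B)=0$ and the phase inequality $\overline{\phi}(A)\ge\overline{\phi}(B)$, produces a contradiction with the computed $\Hom$-groups. Concretely, one shows $\Hom(A,F'_\Gm)\neq0$ and $\Hom(F'_\Gm,B)\neq0$ force, via the known resolution/exact sequences for $F_\Gm$ (equation \eqref{eqn:Fkerev} and Remark~\ref{rmk:nonT}), that $A$ or $B$ would have to be one of the exceptional-type objects $\mathop{\mathrm{pr}}\O_L(iH)$, whose phases violate the HN ordering.

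For the non-CM case, I would use Proposition~\ref{prop:tiltstabilityFCafterwall}(2): $F'_\Gm$ fits into a non-split triangle $\O_Y(-H)[1]\to F'_\Gm\to N_\Gm$ in $\Coh^\beta(Y)$, and it is $\nu_{\alpha,\beta}$-stable for $\beta=-5/6$, $\alpha=1/6-\epsilon$. The point is that $F'_\Gm$ lies in the same irreducible component $\mathfrak Z$ of the tilt-moduli space as the aCM sheaves $F_\Gm$ (shown in Section~\ref{subsec:birtransf}), hence has the same numerical invariants, in particular $\mathbf{v}(F'_\Gm)^2=6$ and $\Hom$-groups $1,8,1$ in degrees $0,1,2$ — these follow from $F'_\Gm\in\cat{T}_Y$ being a flat limit / deformation of $F_\Gm$ within $\mathfrak Z$, so $\dim\Ext^1$ is upper semicontinuous and the Euler characteristic is constant, forcing equality since $\mathfrak Z$ is smooth of dimension $8$. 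Then the very same application of Proposition~\ref{prop:StabilityFCprime} and the lattice argument above applies verbatim. The main obstacle I anticipate is the last step of the aCM case: translating the numerical constraint ``$\{\mathbf{v}(A),\mathbf{v}(B)\}$ is one of finitely many pairs of square-$2$ classes'' into an actual geometric contradiction, which requires knowing that the only $\overline{\sigma}$-stable objects of those Mukai vectors are the projections $\mathop{\mathrm{pr}}\O_L(iH)$ of line bundles on lines (so that one can read off their phases from \eqref{eqn:phasesord}) and then checking $\Hom(\mathop{\mathrm{pr}}\O_L(iH),F'_\Gm)$ and $\Hom(F'_\Gm,\mathop{\mathrm{pr}}\O_L(jH))$ against the phase ordering — essentially a careful repetition of the argument in \cite[Appendix~A]{BLMS}, which the paper explicitly flags as ``a very mild generalization.''
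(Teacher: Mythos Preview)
Your overall strategy matches the paper's: invoke Proposition~\ref{prop:StabilityFCprime}, pin down the Mukai vectors of the potential HN factors $A,B$ inside the rank-two lattice $\Lambda$, use the phase ordering \eqref{eqn:phasesord} to decide which class is $A$ and which is $B$, and then kill the resulting triangle by a $\Hom$ vanishing. Two things need correction.

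First, the lattice computation: the paper computes $\mathbf{v}(F_\Gm')=2\llambda_1+\llambda_2$ (Lemma~\ref{lem:ClassOfFCprime}), and the unique decomposition into square-$2$ classes with pairing $1$ is $\{\llambda_1,\,\llambda_1+\llambda_2\}$, not your ``$\{\llambda_1,\llambda_1+2\llambda_2\}$-type'' pair (note $(\llambda_1+2\llambda_2)^2=6$). The phase inequality \eqref{eqn:phasesord} then forces $\mathbf{v}(A)=\llambda_1+\llambda_2$ and $\mathbf{v}(B)=\llambda_1$. The paper identifies the $\overline{\sigma}$-stable objects with these classes concretely: they are $P_L$ and $F_L$ respectively, for lines $L\subset Y$ (both moduli spaces are $F(Y)$ via Proposition~\ref{prop:MukaiConnectedness}). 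So the only possible destabilizing triangle is $P_L\to F_\Gm'\to F_L$.

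Second, and this is the real gap, your mechanism for the final contradiction is vague. You gesture at ``checking $\Hom$'' via the resolution \eqref{eqn:Fkerev}, but the paper's actual argument is cleaner and different: it goes back to tilt-stability. Lemma~\ref{lem:FLstable} shows that $F_L$ is $\nu_{\alpha,\beta}$-stable for all $\alpha>0$, $\beta<-1/3$, and that $\nu_{\alpha,\beta}(F_L)<\nu_{\alpha,\beta}(F_\Gm')$ on $W_0$. Since $F_\Gm'$ is $\nu_{\alpha,\beta}$-semistable there, this slope inequality immediately gives $\Hom(F_\Gm',F_L)=0$, contradicting the existence of the quotient $F_\Gm'\twoheadrightarrow F_L$. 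This tilt-stability step is what you are missing; it handles the aCM and non-CM cases uniformly, so your case split is unnecessary.
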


Indeed, by Proposition~\ref{prop:BridgelandStabilityFC'} and by Part \eqref{item:main2b} of the Main Theorem, the IHS eightfold $Z(Y)$ is contained in the moduli space $\MMM_{\overline{\sigma}}(\mathbf{v}_0,\phi_0)$, where $\mathbf{v}_0=\mathbf{v}(F_{\Gm}')$ and $\phi_0$ is chosen accordingly, and the inclusion $Z(Y) \hookrightarrow \MMM^{\mathrm{spl}}(\cat{T}_Y)$ is an algebraic morphism.
The theorem then follows directly from Proposition~\ref{prop:MukaiConnectedness}.

To prove Proposition~\ref{prop:BridgelandStabilityFC'} we will use Proposition~\ref{prop:StabilityFCprime}.
To this end, the first thing is to compute the numerical class $\mathbf{v}_0$.

\begin{lem}\label{lem:ClassOfFCprime}
We have $\mathbf{v}_0=\mathbf{v}(F_{\Gm}')=2\llambda_1 + \llambda_2 \in \Lambda$.
\end{lem}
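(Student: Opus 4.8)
The plan is to compute the Mukai vector $\mathbf{v}_0 = \mathbf{v}(F_\Gm')$ by reducing, via the functorial construction of $F_\Gm'$, to a Chern-character computation in $\Db(Y)$ and then projecting onto $\Lambda$. Recall from Section~\ref{subsec:proj} and Section~\ref{subsec:birtransf} that $F_\Gm'=\mathrm{pr}(F_\Gm)=\cat{R}_{\O_Y(-H)}(F_\Gm)$, and that $F_\Gm=\cat{L}_{\O_Y}(\I_{\Gm/S}(2H))[-1]$; moreover $\mathbf{v}=\mathbf{v}\circ\mathrm{pr}$ on $K$-theory, so it suffices to compute the image in $\Lambda$ of the class of $F_\Gm$ (or even of $\I_{\Gm/S}(2H)$, since mutations by the exceptional objects $\O_Y(iH)$ act trivially after applying $\mathrm{pr}$). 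Concretely, $\mathbf{v}_0 = \mathbf{v}([\I_{\Gm/S}(2H)])$ up to the sign/shift bookkeeping coming from the $[-1]$.

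The first step is to write down $\mathbf{v}([\I_{\Gm/S}(2H)]) \in K(\Db(Y))$ modulo the subgroup generated by $[\O_Y], [\O_Y(H)], [\O_Y(2H)]$, and then express it in the basis $\{\llambda_1,\llambda_2\}$ where $\llambda_i = \mathbf{v}([\mathrm{pr}\,\O_L(iH)])$, $L$ a line. Since $\Lambda$ has rank two, it is enough to pin down the coefficients $(m,n)$ in $\mathbf{v}_0 = m\llambda_1 + n\llambda_2$ by computing two independent numerical pairings. The natural choice is to pair against the classes $\llambda_1,\llambda_2$ themselves, using the given intersection matrix $\left(\begin{smallmatrix}2&-1\\-1&2\end{smallmatrix}\right)$: one computes $(\mathbf{v}_0,\llambda_1)$ and $(\mathbf{v}_0,\llambda_2)$ as Euler characteristics $-\chi(F_\Gm',\mathrm{pr}\,\O_L(iH))=-\chi(F_\Gm,\O_L(iH))$ in $\Db(Y)$ (the last equality because $F_\Gm'$ differs from $F_\Gm$ by terms in $\langle\O_Y,\O_Y(H),\O_Y(2H)\rangle$ which are right-orthogonal to $\mathrm{pr}\,\O_L(iH)$ — or directly because $\chi(-,\mathrm{pr}(-))=\chi(\mathrm{pr}(-),-)$ appropriately). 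These $\chi$'s are computed from the short exact sequence \eqref{eqn:Fkerev}, i.e.\ $\chi(F_\Gm,\O_L(iH)) = 3\chi(\O_Y,\O_L(iH)) - \chi(\I_{\Gm/S}(2H),\O_L(iH))$, and $\chi(\I_{\Gm/S}(2H),\O_L(iH))$ is handled via the structure sequence $0\to\I_{\Gm/S}\to\O_S\to\O_\Gm\to 0$ on $Y$ together with Riemann--Roch (or Hirzebruch--Riemann--Roch) on $Y$ for the sheaves $\O_S(2H)$, $\O_\Gm(2H)$ restricted against a line $L$.

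Once the two integers $(\mathbf{v}_0,\llambda_1)$ and $(\mathbf{v}_0,\llambda_2)$ are in hand, inverting the intersection matrix gives $(m,n)$, and one checks the answer is $(m,n)=(2,1)$, i.e.\ $\mathbf{v}_0 = 2\llambda_1+\llambda_2$. As a consistency check one verifies $\mathbf{v}_0^2 = (2\llambda_1+\llambda_2, 2\llambda_1+\llambda_2) = 4\cdot 2 + 4\cdot(-1) + 2 = 6$, matching the fact that $\dim\Ext^1(F_\Gm',F_\Gm')=8$ so $\mathbf{v}_0^2 = 2 - 2\cdot 1 + 2\cdot 0 + \dots$ — more precisely, for an object with $\Hom=\CC$, $\Ext^1=\CC^8$, $\Ext^2=\CC$ in the K3 category one has $\mathbf{v}_0^2 = -\chi(F_\Gm',F_\Gm') = -(1 - 8 + 1) = 6$. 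This numerical cross-check is exactly the hypothesis $\mathbf{v}(E)^2=6$ needed to invoke Proposition~\ref{prop:StabilityFCprime} in the proof of Proposition~\ref{prop:BridgelandStabilityFC'}.

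The only mildly delicate point — and the main obstacle — is the bookkeeping: making sure the $[-1]$ shift in the definition of $F_\Gm$ and the direction of the mutations $\cat{L}_{\O_Y}$, $\cat{R}_{\O_Y(-H)}$ are tracked with the correct signs, so that one does not end up with $\mathbf{v}_0 = -(2\llambda_1+\llambda_2)$ or a reflected class. This is cleanly resolved by noting that $\mathbf{v}(F_\Gm') = \mathbf{v}(\mathrm{pr}\,F_\Gm)$ and that $\mathrm{pr}$ is exact, so the shift contributes an overall sign that is fixed by demanding $(\mathbf{v}_0,\mathbf{v}_0)=6>0$ together with positivity of, say, the central charge; alternatively one computes $\chi(\O_Y(iH), F_\Gm)$ directly and uses the semiorthogonal decomposition to extract the $\cat{T}_Y$-component numerically. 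Everything else is a finite Riemann--Roch computation on $Y$ and on the surface $S$ and curve $\Gm$, using $\ch(\I_{\Gm/S}(2H))$ which follows from \eqref{eqn:calcHPI} and the known Chern character of $F_\Gm$ recorded after \eqref{eqn:Fkerev}.
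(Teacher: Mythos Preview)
Your proposal is correct and amounts to one concrete way of carrying out the ``straightforward computation'' that the paper's one-line proof invokes; the paper gives no further details, so there is nothing substantive to compare. Your strategy of pairing $\mathbf{v}_0$ against $\llambda_1,\llambda_2$ via Euler characteristics and inverting the Gram matrix is perfectly sound, and the cross-check $\mathbf{v}_0^2=6$ is a good sanity test. The one place to be careful is exactly the point you flag: the identity you write as $-\chi(F_\Gm',\mathrm{pr}\,\O_L(iH))=-\chi(F_\Gm,\O_L(iH))$ is cleanest if you use the symmetry of $\chi$ on the K3 category $\cat{T}_Y$ and the left-adjoint property of $\mathrm{pr}$ to rewrite it as $-\chi(\O_L(iH),F_\Gm')$, together with $[F_\Gm']=[F_\Gm]$ in $K(\Db(Y))$ (which follows since $\chi(F_\Gm,\O_Y(-H))=0$ by Remark~\ref{rmk:nonT}); as written, $\chi(F_\Gm,\O_L(iH))$ in $\Db(Y)$ is not a priori the same number without invoking this symmetry.
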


\begin{proof}
This is a straightforward computation, by using the definitions.
\end{proof}

By Proposition~\ref{prop:StabilityFCprime} and Lemma~\ref{lem:ClassOfFCprime}, if we assume that $F_{\Gm}'$ is not $\overline{\sigma}$-stable, then we must have $\mathbf{v}(A),\mathbf{v}(B)\in\{\llambda_1,\llambda_1+\llambda_2 \}$.
Stable objects with class $\llambda_1$ can be easily determined, by using Proposition~\ref{prop:StabilityFano}.
Indeed, we recall the following construction of Kuznetsov--Markushevich from \cite{KM}, and used in \cite[Appendix~A]{BLMS}.
Given a line $L\subset Y$, we define a torsion-free sheaf $F_L$ as the kernel of the (surjective) evaluation map
\[
F_L:= \ker \left( H^0(Y,\I_{L/Y}(H))\otimes\mathcal{O}_Y \twoheadrightarrow \I_{L/Y}(H)\right),
\]
where $\I_{L/Y}$ is the ideal sheaf of $L$ in $Y$. Then by \cite[Section 5]{KM}, $F_L$ is a torsion-free Gieseker-stable sheaf on $Y$ which has the same $\Ext$-groups as $\I_{L/Y}$, and which belongs to $\cat{T}_Y$.
Its Chern character is $\ch(F_L)=(3,-H,-\frac{1}{2} H^2, \frac{1}{6}H^3,\frac{1}{8}H^4)$.
By definition of $\llambda_1$, one also easily verifies that $\mathbf{v}(F_L)=\llambda_1$.
By letting $L$ vary, the sheaves $F_L$ span a connected component of the moduli space of Gieseker-stable sheaves which is isomorphic to the Fano variety of lines $F(Y)$ \cite[Proposition~5.5]{KM}.
By Proposition~\ref{prop:StabilityFano}, $F_L$ is $\overline{\sigma}$-stable, and by Proposition~\ref{prop:MukaiConnectedness} the moduli space $\MMM_{\overline{\sigma}}(\llambda_1,\phi(F_L))$ is isomorphic to $F(Y)$.

Bridgeland stable objects with Mukai vector $\llambda_1+\llambda_2$ can also be easily described.
Indeed, given a line $L\subset Y$, we can define objects $P_L$ as in \cite[Section 2.3]{MS1}.
They are non-trivial extensions
\[
\O_Y(-H)[1] \to P_L \to \I_{L/Y},
\]
defined as the right mutation $P_L:=\cat{R}_{\O_Y(-H)}(F_L(-H))$.
In particular, $P_L$ is obtained from $F_L$ by applying an autoequivalence of $\cat{T}_Y$, and so it has the same $\Ext$-groups as $F_L$ and it is easy to check that its class in $\Lambda$ is exactly $$\mathbf{v}(P_L)=\llambda_1+\llambda_2.$$
Therefore, they are also all $\overline{\sigma}$-stable, and their moduli space $\MMM_{\overline{\sigma}}(\llambda_1+\llambda_2,\phi(P_L))$ is isomorphic to $F(Y)$ as well.

We can also easily compute the corresponding phases of $F_L$ and $P_L$ with respect to $\overline{\sigma}$.
To this end we use the computation in the proof of \cite[Proposition~9.10]{BLMS}; we have
\begin{equation}\label{eqn:phasesord}
\phi(F_L)<\phi(P_L)<\phi(F_L)+1.
\end{equation}
Hence, by Proposition~\ref{prop:StabilityFCprime}, if $F_{\Gm}'$ is not $\overline{\sigma}$-stable, we must have an exact triangle
\[
P_L \to F_{\Gm}' \to F_L,
\]
namely, in the notation of the proposition, we must have $A=P_L$ and $B=F_L$.
Hence, to prove Proposition~\ref{prop:BridgelandStabilityFC'}, it is enough to show that $\Hom(F_{\Gm}',F_L^{})=0$.
This may be checked directly, or it follows immediately from Lemma~\ref{lem:FLstable} below, thus concluding the proof of Proposition~\ref{prop:BridgelandStabilityFC'}, and so the proof of the Main Theorem.

\begin{lem}\label{lem:FLstable}
The sheaf $F_L$ is $\nu_{\alpha,\beta}$-stable, for all $\alpha>0$ and $\beta<-1/3$.
Moreover, we have $\nu_{\alpha,\beta}(F_L^{})<\nu_{\alpha,\beta}(F_{\Gm}')$ for $(\alpha,\beta)\in W_0$.
\end{lem}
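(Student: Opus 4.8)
The plan is to prove the two assertions separately, the second being an explicit slope computation on $W_0$ once tilt-stability of $F_L$ is established.

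\emph{Tilt-stability of $F_L$.} By \cite{KM}, $F_L$ is a Gieseker-stable torsion-free sheaf, and $\ch_{\leq 2}(F_L)=(3,-H,-\tfrac12 H^2)$ is a primitive vector in $\Gamma$. Since $H^3\ch_1^\beta(F_L)=-(1+3\beta)H^4$ is positive exactly for $\beta<-\tfrac13$, Theorem~\ref{thm:ch2} gives that $F_L$ is $\nu_{\alpha,\beta}$-stable for $\alpha\gg 0$ and any fixed $\beta<-\tfrac13$. The region $\{(\alpha,\beta):\alpha>0,\ \beta<-\tfrac13\}$ is connected, so by local finiteness of walls it is enough to show that $F_L$ does not lie on any actual wall there; then $\nu_{\alpha,\beta}$-stability propagates from $\alpha=+\infty$ to the whole region.

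\emph{Absence of walls (the main point).} Here I would proceed exactly as in Lemma~\ref{lem:firstwall}. If $0\to A\to F_L\to B\to 0$ is a destabilizing sequence in $\Coh^\beta(Y)$ on a wall with $\beta<-\tfrac13$, then Theorem~\ref{thm:BGwithoutch3} together with the standard wall inequalities (see \cite{S}) forces $\ch_{\leq 2}(A),\ch_{\leq 2}(B)\in\Gamma$ with $0<H^3\ch_1^\beta(A),H^3\ch_1^\beta(B)<H^3\ch_1^\beta(F_L)$ and $0\leq\Delta(A),\Delta(B)\leq\Delta(F_L)$, where $\Delta(F_L)=(H^3\ch_1)^2-2(H^4\ch_0)(H^2\ch_2)=36$. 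These inequalities leave only finitely many candidate classes; they can be listed with the same SAGE routine as in Lemma~\ref{lem:firstwall} (it suffices to test the inequalities at one interior value, say $\beta=-\tfrac12$), and each is excluded by a short direct check. The only candidate with $\Delta=0$ is $\O_Y(dH)[1]$ by Theorem~\ref{thm:BGwithoutch3}, and for it the wall equation $\nu_{\alpha,\beta}(\O_Y(dH)[1])=\nu_{\alpha,\beta}(F_L)$ has no solution with $\alpha>0$ in the range where $\O_Y(dH)[1]\in\Coh^\beta(Y)$ (for $d=-1$ it reduces to $\alpha^2=-(\beta+1)^2$); the remaining finitely many classes are ruled out in the same elementary way, some already by the strict inequality $\Delta(A)<\Delta(F_L)$. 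I expect this enumeration to be the only real work in the proof.

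\emph{The slope inequality on $W_0$.} Since $\nu_{\alpha,\beta}$ depends only on the truncated Chern character and $\ch_{\leq 2}(F_\Gm')=\mathbf{v}'_2=(3,0,-H^2)$, the second claim is purely numerical. Substituting $\alpha^2=\tfrac1{36}-\bigl(\beta+\tfrac56\bigr)^2$ into the two slope functions, one gets $\nu_{\alpha,\beta}(F_\Gm')=-\beta-\tfrac56$ and $\nu_{\alpha,\beta}(F_L)=\dfrac{(6\beta+1)(\beta+1)}{-2(1+3\beta)}$ on $W_0$, with $1+3\beta<0$ there. Clearing the positive denominator $-2(1+3\beta)$ and simplifying, the inequality $\nu_{\alpha,\beta}(F_L)<\nu_{\alpha,\beta}(F_\Gm')$ reduces to $18\beta^2+21\beta+3<18\beta^2+21\beta+5$, i.e.\ $3<5$, so it holds on all of $W_0$. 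Together with the first part and the $\nu_{\alpha,\beta}$-semistability of $F_\Gm'$ on $W_0$ (Proposition~\ref{prop:tiltstabilityFC}), this slope gap gives $\Hom(F_\Gm',F_L)=0$, which is the input needed for Proposition~\ref{prop:BridgelandStabilityFC'}.
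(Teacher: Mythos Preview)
Your overall strategy coincides with the paper's: use Theorem~\ref{thm:ch2} for $\alpha\gg 0$, then run the wall-enumeration routine from Lemma~\ref{lem:firstwall} along a vertical line to conclude that there are no walls at all in the region $\beta<-\tfrac13$. The slope computation in the second part is correct (and more explicit than the paper's ``direct computation'').

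There is, however, a genuine gap in your first part: the choice of test line $\beta=-\tfrac12$ is not justified and in fact does not work. The nested-semicircle structure of numerical walls guarantees only that the vertical line through the accumulation point meets \emph{every} wall. For $\ch_{\leq 2}(F_L)=(3,-H,-\tfrac12 H^2)$ one has $\ch_2^\beta(F_L)=\tfrac12(3\beta-1)(\beta+1)H^2$, so in the region $\beta<-\tfrac13$ the accumulation point is $\overline{\beta}=-1$. Any semicircular wall contains $(0,-1)$ in its interior and hence meets the line $\beta=-1$, but sufficiently small walls (with both endpoints close to $-1$) lie entirely in $\{\beta<-\tfrac12\}$ and are invisible to a check at $\beta=-\tfrac12$. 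Thus showing that no candidate class satisfies the inequalities at $\beta=-\tfrac12$ only rules out walls crossing that line, not all walls in the region. The paper runs the computation at $\beta_0=-1$ for exactly this reason; if you replace $-\tfrac12$ by $-1$ your argument goes through.
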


\begin{proof}
This can be shown, similarly to Lemma~\ref{lem:firstwall}, by looking at the semi-line $\beta_0=-1$, and letting $\alpha$ vary.
Indeed, the unique straight wall corresponds to $\beta=-1/3$ and we are interested in the region $\beta<-1/3$ so that the objects $F_L$ are in $\cat{Coh}^\beta (Y)$.
Moreover, the point of accumulation for the center of all walls for $F_L$ in this region is $\overline{\beta}=-1$.
Hence, if we prove that $F_L$ is $\nu_{\alpha,\beta_0}$-stable, for all $\alpha>0$, we have that it is $\nu_{\alpha,\beta}$-stable for all $\alpha>0$ and $\beta<-1/3$. The inequality between the slopes is then a direct computation.

For $\alpha\gg0$ large, since $(3,-H,-H^2/2)$ is primitive and $-1/3>-1$, the objects $F_L$ are $\nu_{\alpha,\beta_0}$-stable.
Then, as in Lemma~\ref{lem:firstwall}, we can check by computer that there is no wall intersecting the semi-line $\beta_0=-1$. 
\end{proof}


\medskip

{\small\noindent{\bf Acknowledgements.} Parts of this paper were written while Paolo Stellari was visiting the Department of Mathematics of the Ohio State University, Northeastern University, and the Universit\'e Paris Diderot -- Paris~7.
The authors were working on this project while Mart\'i Lahoz, Emanuele Macr\`i and Paolo Stellari were visiting the Institute of Mathematics of the University of Bonn.
During the revision of the paper, Emanuele Macr\`i was holding a Poincar\'e Chair from the Institut Henri Poincar\'e and the Clay Mathematics Institute.
The warm hospitality and the financial support of these institutions are gratefully acknowledged.
It is a pleasure to thank Nick Addington, Christian Lehn, and Benjamin Schmidt for very useful conversations on a preliminary version of this paper and for their comments on its first draft.
We are very grateful to Evgeny Shinder and Andrey Soldatenkov for generously sharing with us a preliminary version of \cite{SS}.
The paper benefited very much from the careful reading and insightful comments and suggestions from the referee, whom we also thank for encouraging us to improve the exposition in Section~\ref{sec:tiltstable} and to add Section~\ref{sec:Bridgelandstable}.
}


\end{document}